\theoremstyle{plain} 
\newtheorem{theorem}{\indent\sc Theorem}[section]
\newtheorem{lemma}[theorem]{\indent\sc Lemma}
\newtheorem{corollary}[theorem]{\indent\sc Corollary}
\newtheorem{proposition}[theorem]{\indent\sc Proposition}
\theoremstyle{definition} 
\newtheorem{definition}{\indent\sc Definition}
\newtheorem{remark}[theorem]{\indent\sc remark}
\newtheorem{example}[theorem]{\indent\sc Example}
\newcommand{\C}{\mathbb{C}}
\newcommand{\R}{\mathbb{R}}
\newcommand{\Q}{\mathbb{Q}}
\newcommand{\Z}{\mathbb{Z}}
\newcommand{\N}{\mathbb{N}}
\newcommand{\Li}{\rm{Li}}
\newcommand{\AND}{\quad\textrm{and}\quad}
\newcommand{\btheta}{\boldsymbol{\theta}}
\newcommand{\bx}{\textbf{x}}
\newcommand{\by}{\textbf{y}}
\newcommand{\cC}{{\mathcal{C}}}
\newcommand\ee{\varepsilon}
\newcommand{\GL}{\mathrm{GL}}
\newcommand\GrO{\mathcal{O}} 
\newcommand{\Mat}{\mathrm{Mat}}
\newcommand{\norm}[1]{\|#1\|}
\newcommand\pu{\underline{\psi}}
\def \hfillx {\hspace*{ -\textwidth} \hfill}
\title{Pad\'e approximation for a class of hypergeometric functions \\and parametric geometry of numbers}
\author{\textsc{Makoto Kawashima} and \textsc{Anthony Po\"{e}ls}}
\date{2022 March 1} 
\newcommand{\keywords}{{
  \footnotesize
  \textbf{Keywords}: Pad\'e approximation, irrationality exponent, hypergeometric functions, effective Poincar\'e-Perron theorem, parametric geometry of numbers.
}}
\begin{document}

\maketitle

\begin{abstract}
    In this article we obtain new irrationality measures for values of functions which belong to a certain class of hypergeometric functions including shifted logarithmic functions, binomial functions and shifted exponential functions. We explicitly construct Pad\'e approximations by using a formal method and show that the associated sequences satisfy a Poincar\'e-type recurrence. To study precisely the asymptotic behavior of those sequences, we establish an \emph{effective} version of the Poincar\'e-Perron theorem. As a consequence we obtain, among others, effective irrationality measures for values of binomial functions at rational numbers, which might have useful arithmetic applications. A general theorem on simultaneous rational approximations that we need is proven by using new arguments relying on parametric geometry of numbers.
\end{abstract}

\keywords

Mathematics Subject Classification (2020): 11J72 (primary); 11J82, 11J61 (secondary).

\section{Introduction}\label{section: intro}

Let $\theta\in\R\setminus\Q$. We say that a non-negative real number $\mu$ is an \textsl{irrationality measure} of $\theta$ if there exist positive constants $c$, $q_0$ such that
\begin{align}\label{eq: def irr exponent}
    \left\vert\theta-\dfrac{p}{q}\right\vert\geq \dfrac{c}{q^{\mu}}
\end{align}
for any rational numbers $p/q$ with $q \geq q_0$. The irrationality exponent $\mu(\theta)$ of $\theta$ is defined as the infimum of the set of its irrationality measures (with the convention that $\mu(\theta)=\infty$ if this set is empty). The theory of continued fractions shows that $\mu(\theta)\geq 2$, with equality for almost all $\theta$ in the sense of the Lebesque measure. On the other hand, the theorem of K.~F.~Roth implies that we also have $\mu(\theta)=2$ if $\theta$ is an irrational algebraic number. However, it is well-known that this result is non-effective; for a given $\mu>2$ close to $2$, we do not know how to compute the constants $c$ and $q_0$ satisfying \eqref{eq: def irr exponent}.

\medskip

One of the prominent points in the theory of Pad\'e approximation is to produce effective or explicit estimates, as it is crucial for specific applications to irrationality questions and Diophantine equations. See for example the results of A.~Baker \cite{ABaker}, M.~A.~Bennett \cite{BennettSimul, BennettAus, BennettCrelle}, D.~V.~Chudnovsky and G.~V.~Chudnovsky \cite{ch9, Chubrothers84, ch11}, which give sharp irrationality measures  related to binomial functions and refined bounds for the number of the solutions to certain Diophantine equations, as well as the remarkable study on Pellian equations over function fields by U.~Zannier \cite{Zannier}. The study of values of functions such as $G$-functions and $E$-functions is of particular interest, see for example the book of N.~I.~Fel'dman and  Y.~V.~Nesterenko \cite{FelNest1998} which contains most references related to the subject before $1998$. There are also several related works by S.~Fischler and T.~Rivoal \cite{FischlerRivoal2014, FischlerRivoal}, A.~I.~Galochkin \cite{G1, G2, G3}, K.~V$\ddot{\text{a}}$$\ddot{\text{a}}$n$\ddot{\text{a}}$nen \cite{Va, Va2}, P.~Voutier \cite{Voutier} and W. Zudilin \cite{Z}, which establish linear independence measures for values of certain hypergeometric functions. Recent results on polylogarithms and generalized hypergeometric $G$-functions were obtained by S.~David, N.~Hirata-Kohno and the first author of this article in \cite{DHK2,DHK3,DHK4,DHK5}.

\medskip

Let $\gamma,\omega,x$ be rational numbers with $\gamma\notin\{-1,-2,\dots\}$, $\omega\notin\Z$ and $x\in\mathbb{Q}\cap[0,1)$. We give a sufficient condition on $\beta\in\Q$ under which $\theta=f(\beta)$ is irrational, and we then estimate its irrationality exponent, where $f(z)$ is a hypergeometric function of the form
\begin{equation}\label{eq: three special cases for f}
    f(z)=
    \begin{cases}
        {\displaystyle{\sum_{k=0}^{\infty}}}\dfrac{(-\omega)_k}{k!}\dfrac{1}{z^{k+1}}=\dfrac{1}{z}\cdot{}_{2}F_{1} \biggl(\begin{matrix}-\omega, 1\\1 \end{matrix} \biggm|~\dfrac{1}{z}\biggr)=\dfrac{1}{z}\left(1-\dfrac{1}{z}\right)^{\omega}& \text{(binomial~function)}\\
        {{(1+x)}\displaystyle{\sum_{k=0}^{\infty}}}\dfrac{1}{(k+x+1)}\dfrac{1}{z^{k+1}}=(1+x)\Phi_1(x,1/z)& \text{(shifted logarithmic function)}\\
        {\displaystyle{\sum_{k=0}^{\infty}}}\dfrac{1}{(\gamma+2)_k}\dfrac{1}{z^{k+1}}=\dfrac{1}{z}\cdot{}_{1}F_{1} \biggl(\begin{matrix} 1~\\ \gamma+2 \end{matrix} \biggm|~\dfrac{1}{z}\biggr)=:\exp_{\gamma}(1/z)
        & \text{(shifted exponential function)}.
    \end{cases}
\end{equation}
Here, $\Phi_s(x,z)$ is the $s$-th Lerch function (see next section for the precise definition). For given positive integers $p,q$ and non-zero complex numbers $a_1, \ldots, a_p, b_1, \ldots, b_{q}$ (where $b_1,\ldots b_q$ are not negative integers), the function ${}_{p}F_{q}$ denotes the usual generalized hypergeometric function
\begin{eqnarray*}
    {}_{p}F_{q} \biggl(\begin{matrix} a_1,\ldots, a_p ~\\ b_1,\ldots, b_{q} \end{matrix} \biggm|~\dfrac{1}{z}\biggr)
    =\displaystyle\sum_{k=0}^{\infty}\dfrac{(a_1)_k \cdots (a_{p})_k}{(b_1)_k\cdots(b_{q})_k}\dfrac{1}{k!\cdot z^k}\enspace,
\end{eqnarray*}
where $(a)_k$ is the $k$-th Pochhammer symbol $(a)_k= a(a+1)\cdots(a+k-1)$ (with the convention $(a)_0 = 1$). It is worth mentioning that the set of functions given by \eqref{eq: three special cases for f} contains both $G$-functions and $E$-functions. More generally, we study the irrationality of the value $f(\beta)$ for any hypergeometric function $f(z)$ defined by
\begin{equation*}
    f(z)=\sum_{k=0}^{\infty}\dfrac{\prod_{i=1}^k(\alpha i-\delta)}{(\gamma+2)_k}\dfrac{1}{z^{k+1}}\enspace,
\end{equation*}
with $\alpha, \beta, \gamma, \delta\in \Q$, $\gamma\ge -1$, and where $\beta$ is a ``large'' compared to its denominator. By choosing our parameters $(\alpha,\gamma,\delta)$ respectively of the form $(1,-1,1+\omega)$, $(1,x,-x)$ and $(0,\gamma,-1)$, we obtain the examples \eqref{eq: three special cases for f}. Our aim is also to provide new tools to approach irrationality problems in general.

\medskip

In a second important and self-contained part of our article, we establish \emph{an effective version} of the Poincar\'e-Perron theorem, which applies in particular when the coefficients of
the Poincar\'e-type recurrence are rational functions. This is needed to obtain effective irrationality measures of $\theta=f(\beta)$ as above. It might also be useful for other arithmetic problems, such as the study of the solutions of Diophantine equations. Note that K.~Alladi and M.~L.~Robinson \cite{A-R} as well as Chudnovsky \cite{ch9} used Poincar\'e-Perron theorem in their work. However, our effective version gives relatively more precise asymptotic estimates than those found in the literature. Also note that the Poincar\'e-Perron theorem is \emph{not} used in \cite{DHK2,DHK3,DHK4,DHK5}, although our method to construct Pad\'e approximants is similar to theirs.

\medskip

Our strategy can be summarized as follows. Regarding $f(z)$ as a formal Laurent series, we first construct an \emph{explicit} sequence of Pad\'e approximants $\big(P_{n,0}(z),P_{n,1}(z)\big)_{n\geq 0}$ for $f(z)$, as in \cite{DHK2,DHK3,DHK4,DHK5}.

\medskip

We then show that the sequences of polynomials $(P_{n,0}(z))_{n\geq 0}$ and $(P_{n,1}(z))_{n\geq 0}$ satisfy a certain Poincar\'e-type recurrence of order $2$, which allows us, thanks to the Poincar\'e-Perron theorem, to control the asymptotic behavior of the quantities $P_{n,0}(\beta)$, $P_{n,1}(\beta)$ and $R_n(\beta):=P_{n,0}(\beta)f(\beta)-P_{n,1}(\beta)$ as $n$ tends to infinity for any large enough $\beta\in\Q$.

\medskip

Lastly, we control the size of the denominators of the rational numbers $P_{n,0}(\beta)$ and $P_{n,1}(\beta)$. For that purpose, we estimate precisely the quotient of Pochhammer symbols in Section~\ref{quotient}. To complete our proof, we then apply a general theorem on simultaneous rational approximations, whose proof is based on a new argument relying on parametric geometry of numbers. This leads us to several new irrationality measures for $f(\beta)$. In the case of binomial functions, we extend our result to a more general setting by taking $\beta$ in a number field $K$ and by replacing the usual absolute value by the one coming from a given place of $K$. In particular, if the place is non-archimedean, we obtain a $p$-adic version of our theorem for binomial functions.

\medskip

This article is organized as follows. Our main result is stated in Section~\ref{notations}. In Section~\ref{padeformal}, we construct Pad\'e approximants $\big(P_{n,0}(z),P_{n,1}(z)\big)_{n\geq 0}$ as in  \cite{DHK2,DHK3,DHK4,DHK5} and show that they satisfy a certain Poincar\'e-type recurrence. The asymptotic estimates for $|P_{n,0}(\beta)|$, $|P_{n,1}(\beta)|$, $|R_n(\beta)|$ and for the denominators of $P_{n,0}(\beta)$ and $P_{n,1}(\beta)$ are established in Section~\ref{quotient}, modifying and generalizing estimates for the denominators of quotients of Pochhammer symbols from \cite[Lemma $10$]{Lepetit}. Our main theorem is proved is Section~\ref{mainproof}. The last argument of the proof is a consequence of a general result on simultaneous approximations in Section~\ref{parametricgn} relying on parametric geometry of numbers.
Section~\ref{section: Poincare-Perron thm} is devoted to the proof of an effective version of the Poincar\'e-Perron theorem, leading to more precise effective estimates for $|P_{n,0}(\beta)|$, $|P_{n,1}(\beta)|$ and $|R_n(\beta)|$. This is an important key tool to obtain effective irrationality measures.
Explicit examples of irrationality measures for cubic roots are given in Section~\ref{example}, where we compare our results with previous ones. Finally, we give a general statement on binomial functions, which covers the $p$-adic case in Section~\ref{bin}.

\section{Notation and main result}\label{notations}

We denote by $\N$ the set of strictly positive rational integers. Let $z\in \C$ with $|z|<1$. Consider a positive integer $s$ and $x\in \Q$ with $0 \le x <1$. The $s$-th Lerch function, which is a generalized polylogarithmic function, is defined by
\[
    \Phi_s(x,z) =\displaystyle\sum_{k=0}^{\infty}\frac{z^{k+1}}{{(k+x+1)}^s}\enspace.
\]
The function $\Phi_1(x,z)$ is called the shifted logarithmic function with shift $x$, and the function $\Phi_s(0,z)$ is the polylogarithmic function $\Li_s$ with depth $s$. These functions converge in $\vert z \vert<1$. Let $\alpha, \gamma, \delta\in \Q$ be parameters with $\gamma\ge -1$, and define the Laurent series $f(z)$ by
\begin{equation}\label{our function}
    f(z)=\sum_{k=0}^{\infty}\dfrac{\prod_{i=1}^k(\alpha i-\delta)}{(\gamma+2)_k}\dfrac{1}{z^{k+1}}\enspace,
\end{equation}
with the convention $\prod_{i=1}^k(\alpha i-\delta)=1$ if $k=0$. We will develop $f(z)$ at the infinity for  $\vert z \vert> |\alpha|$. For a given $z\in \C$ with $|z|>|\alpha|$, we denote by
\[
    \rho_1(\alpha,z) \leq \rho_2(\alpha,z)
\]
the moduli of the roots $2z-\alpha \pm 2\sqrt{z^2-z\alpha}$ of the characteristic polynomial
\begin{equation*} 
    P(X) = X^2-2(2z-\alpha)X+\alpha^2.
\end{equation*}
The condition $|z|>|\alpha|$ implies that $\rho_1(\alpha,z) \neq \rho_2(\alpha,z)$ ({\it see} Lemma \ref{apply P-P}). Given a non-empty finite set of algebraic numbers $S$, we put
\begin{align*}
&{\rm{den}}(S)=\min\{1\leq n\in \Z \mid  \text{$n\alpha$ is an algebraic integer for each $\alpha\in S$}\}.
\end{align*}
Let $n$ be a non-negative integer and $y\in \Q$. We define
\begin{align*}
    \nu(y)=\prod_{\substack{q:\rm{prime} \\ q|{\rm{den}}(y)}}q^{q/(q-1)} \AND \nu_n(y)= \prod_{\substack{q:\rm{prime} \\ q|{\rm{den}}(y)}}q^{n+\lfloor n/(q-1)\rfloor}.
\end{align*}
Note that $\nu(y+m) = \nu(y)$ and $\nu_n(y+m) = \nu_n(y)$ for any integer $m\in\Z$. We are now ready to state our main result.

\begin{theorem}[Main Theorem]\label{main}
Let $\alpha,\beta,\gamma,\delta\in \Q$ with $|\beta|>|\alpha|$, $\gamma\ge -1$, $\delta\notin \alpha \N$ and $-(\alpha\gamma+\delta)\notin \alpha \N$. Suppose $\alpha\neq 0$ and define $\Delta$, $E$ and $Q$ by
\begin{align*}
    \Delta & = {\rm{den}}(\alpha)\cdot{\rm{den}}(\beta)\cdot{\rm{exp}}\left(\dfrac{{\rm{den}}(\gamma)}{\varphi({\rm{den}}(\gamma))}\right)\cdot \nu(\gamma) \cdot \nu(\delta/\alpha), \\
         Q & =\rho_2(\alpha,\beta)\cdot\Delta, \\
         E & = \big(\rho_1(\alpha,\beta)\cdot\Delta\big)^{-1},
\end{align*}
where $\varphi$ denotes the Euler's totient function. Assume $E>1$. Then the real number $f(\beta)$ is irrational, and its irrationality exponent satisfies
\[
    \mu(f(\beta))\le 1+\dfrac{\log(Q)}{\log(E)}.
\]
If $\alpha = 0$, then $f(\beta)$ is irrational and $\mu(f(\beta)) = 2$.
\end{theorem}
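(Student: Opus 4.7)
The plan is to follow the five-step strategy announced in the introduction. I would first use the explicit sequence of Padé approximants $(P_{n,0}(z),P_{n,1}(z))_{n\geq 0}$ for $f(z)$ constructed in Section~\ref{padeformal}, together with the associated remainder
\[
    R_n(z) = P_{n,0}(z)\, f(z) - P_{n,1}(z),
\]
which vanishes to high order at infinity. Specializing at $z=\beta$ produces a sequence of rational approximations $P_{n,1}(\beta)/P_{n,0}(\beta)$ to $f(\beta)$ from which an upper bound on $\mu(f(\beta))$ can be read off in the standard way.

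The analytic heart of the argument uses the Poincaré-type linear recurrence of order $2$ satisfied by both $(P_{n,0}(\beta))_{n\geq 0}$ and $(P_{n,1}(\beta))_{n\geq 0}$, whose limit characteristic polynomial is $P(X)=X^2-2(2\beta-\alpha)X+\alpha^2$, as proved in Section~\ref{padeformal}. The assumption $|\beta|>|\alpha|$ guarantees that its two roots have distinct moduli $\rho_1(\alpha,\beta)<\rho_2(\alpha,\beta)$, so the effective Poincaré-Perron theorem of Section~\ref{section: Poincare-Perron thm} yields
\[
    \limsup_{n\to\infty}|P_{n,i}(\beta)|^{1/n}\le \rho_2(\alpha,\beta) \text{ for } i\in\{0,1\}, \qquad \limsup_{n\to\infty}|R_n(\beta)|^{1/n}\le \rho_1(\alpha,\beta).
\]
Identifying $R_n(\beta)$ with the slow (i.e. genuinely small) solution of the recurrence is a necessary verification; it can be done either from the integral or series representation of $R_n(z)$ near infinity or by an initial-term comparison.

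The next step is to control the denominators of the rational numbers $P_{n,0}(\beta)$ and $P_{n,1}(\beta)$. Since these polynomials are built from quotients of Pochhammer symbols involving $\gamma$ and $\delta/\alpha$, together with polynomial factors in $\alpha$ and $\beta$, I would apply the sharpening and generalization of \cite[Lemma~10]{Lepetit} developed in Section~\ref{quotient}. This explains the precise shape of $\Delta$: the factor $\mathrm{den}(\alpha)\cdot\mathrm{den}(\beta)$ absorbs denominators coming from polynomial evaluations; $\nu(\gamma)\nu(\delta/\alpha)$ controls the primes occurring in $(\gamma+2)_n$ and in $\prod_{i=1}^n(\alpha i-\delta)$ via Legendre-type $p$-adic valuation counts; and the factor $\exp(\mathrm{den}(\gamma)/\varphi(\mathrm{den}(\gamma)))$ comes from a Mertens-type estimate over primes dividing $\mathrm{den}(\gamma)$. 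One obtains a sequence of positive integers $D_n$ with $D_n^{1/n}\to \Delta$ such that $q_n:=D_n P_{n,0}(\beta)$ and $p_n:=D_n P_{n,1}(\beta)$ are rational integers satisfying
\[
    |q_n|\le Q^{n(1+o(1))} \qquad\text{and}\qquad |q_n f(\beta)-p_n|\le E^{-n(1+o(1))}.
\]

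To conclude, I would feed these estimates into the simultaneous rational approximation theorem of Section~\ref{parametricgn}, whose proof relies on parametric geometry of numbers. The one-sequence argument gives $\mu(f(\beta))\le 1+\log(Q)/\log(E)$ as soon as $E>1$; the parametric geometry of numbers input is what allows the argument to proceed robustly even when some $P_{n,0}(\beta)$ vanishes or when our sequence is far from reaching every large denominator. The case $\alpha=0$ has to be treated separately: the characteristic polynomial degenerates (the root $\alpha^2=0$ is lost) and $f(z)$ becomes, up to a polynomial factor, a shifted exponential; here $R_n(\beta)$ decays super-geometrically and a direct argument yields $\mu(f(\beta))=2$. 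The main obstacle I anticipate is the sharp denominator bound producing the precise constant $\Delta$, since both the prime-by-prime Legendre-style valuation count and the Mertens-type estimate must be carried out carefully: any looseness at this step translates directly into a weaker irrationality exponent.
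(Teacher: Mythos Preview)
Your proposal is correct and follows essentially the same route as the paper's proof in Section~\ref{mainproof}: construct the Pad\'e approximants of Proposition~\ref{pade f}, use Lemma~\ref{apply P-P} (the ordinary Poincar\'e--Perron theorem suffices here, the effective version of Section~\ref{section: Poincare-Perron thm} is only needed for Remark~\ref{remark: remark effective measures}) for the growth of $P_{n,i}(\beta)$ and $R_n(\beta)$, clear denominators via Lemmas~\ref{valuation} and~\ref{denominator P Q}, and apply Corollary~\ref{cor lem: Alladi-Rob general}. The one ingredient you underplay is Lemma~\ref{lem: Delta_n}: the non-vanishing of $\det M_{2,n}$ (guaranteed by the hypotheses $\delta\notin\alpha\N$ and $-(\alpha\gamma+\delta)\notin\alpha\N$) is precisely what ensures consecutive pairs $(p_n,q_n)$ and $(p_{n+1},q_{n+1})$ are linearly independent, which is the hypothesis Corollary~\ref{cor lem: Alladi-Rob general} actually needs---this is a direct verification, not something the parametric geometry of numbers absorbs for you.
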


Note that in the case $\alpha\neq 0$, it is possible to obtain effective irrationality measures in our theorem (see Remark~\ref{remark: remark effective measures} for more details). When $f(z)$ is one of the special functions of \eqref{eq: three special cases for f}, we have the following results. Note that $\alpha=1$ implies that the product of the two roots of the characteristic polynomial $P$ is equal to $1$, so that $\rho_1(1,\beta)=\rho_2(1,\beta)^{-1}$.

\begin{corollary}[shifted logarithmic function]\label{log shift}
Let $\beta,x \in \Q$ with $|\beta|>1$ and $0\leq x < 1$. Define
\begin{align*}
    &\Delta= {\rm{den}}(\beta)\cdot{\rm{exp}}\left(\dfrac{{\rm{den}}(x)}{\varphi({\rm{den}}(x))}\right)\cdot \nu(x),\\
    &Q=\rho_2(1,\beta)\cdot\Delta,\\
    &E=\rho_2(1,\beta)\cdot\Delta^{-1},
\end{align*}
and assume $E>1$. Then $\Phi_1(x,1/\beta)$ is irrational, and its irrationality exponent satisfies
\[
    \mu(\Phi_1(x,1/\beta))\le 1+\dfrac{\log(Q)}{\log(E)}\,.
\]
\end{corollary}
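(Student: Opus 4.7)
The plan is to derive Corollary~\ref{log shift} from the Main Theorem (Theorem~\ref{main}) by specializing to the parameters $(\alpha,\gamma,\delta)=(1,x,-x)$ in \eqref{our function}. The key observation is the telescoping identity
\[
    \frac{\prod_{i=1}^k (i+x)}{(x+2)_k}
    = \frac{(1+x)(2+x)\cdots(k+x)}{(x+2)(x+3)\cdots(k+1+x)}
    = \frac{1+x}{k+1+x},
\]
which identifies the series \eqref{our function} at this specialization with $(1+x)\Phi_1(x,1/z)$. Since $1+x\in\Q^{\times}$, the value $\Phi_1(x,1/\beta)$ is irrational (resp.\ has a given irrationality exponent) if and only if the same holds for $(1+x)\Phi_1(x,1/\beta)$, so it suffices to apply Theorem~\ref{main} to the latter.

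The hypotheses of Theorem~\ref{main} are then straightforward to verify: $|\beta|>|\alpha|=1$ is given; $\gamma=x$ satisfies $\gamma\ge 0\ge -1$; $\delta=-x\le 0$, hence $\delta\notin\alpha\N=\N$; $-(\alpha\gamma+\delta)=0\notin\N$; and $\alpha=1\ne 0$. Because $\alpha=1$, the characteristic polynomial is $X^2-2(2\beta-1)X+1$, whose roots have product $1$, so $\rho_1(1,\beta)=\rho_2(1,\beta)^{-1}$. Consequently the quantity $E=(\rho_1(1,\beta)\,\Delta)^{-1}$ from Theorem~\ref{main} coincides with $\rho_2(1,\beta)\cdot\Delta^{-1}$, as stated in the corollary; the expression for $Q$ agrees by direct substitution.

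The main subtlety concerns $\Delta$. Substituting the parameters into Theorem~\ref{main} would \emph{a priori} produce $\nu(\gamma)\nu(\delta/\alpha)=\nu(x)\nu(-x)=\nu(x)^{2}$, yet the corollary records only a single factor $\nu(x)$. This refinement is exactly what the telescoping above enables: because the coefficients of the Laurent series reduce to the simple form $(1+x)/(k+x+1)$, the denominator estimates of Section~\ref{quotient} for the Padé numerators $P_{n,j}(\beta)$ only pick up one copy of $\nu(x)$, coming from the rationals $1/(k+x+1)$, rather than two copies arising from the generic product $\prod_{i=1}^k(\alpha i-\delta)/(\gamma+2)_k$. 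Establishing this sharper denominator bound in the shifted-logarithmic setting is the main obstacle of the proof; once granted, Theorem~\ref{main} directly yields the irrationality of $\Phi_1(x,1/\beta)$ and the bound $\mu(\Phi_1(x,1/\beta))\le 1+\log(Q)/\log(E)$.
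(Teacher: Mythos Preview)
Your proposal is correct and matches the paper's approach. The paper derives Corollary~\ref{log shift} by rerunning the proof of Theorem~\ref{main} with $(\alpha,\gamma,\delta)=(1,x,-x)$ but replacing the general denominator factor $\kappa_n$ from Lemma~\ref{denominator P Q}\,(i) by the sharper one from Lemma~\ref{denominator P Q}\,(ii); that lemma's proof in the shifted-logarithmic case is precisely the telescoping $\frac{(1+x)_{k-\ell}}{(x+2)_{k-\ell}}=\frac{1+x}{k-\ell+1+x}$ you identified, applied inside the explicit formula for $P_{n,1}(z)$ (and a corresponding simplification in $P_{n,0}(z)$), together with Lemma~\ref{valuation}\,(i) and (iii).
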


\begin{corollary}[binomial function]\label{binom}
Let $\omega\in \Q\setminus\Z$ and $\beta\in \Q$ with $|\beta|>1$.
Given a non-negative integer $n$, we put
\[
    G_n(\omega)={\rm{GCD}}\left(\nu_n(\omega)\binom{n+k-1}{k}\binom{n-\omega-1}{n-k}, \ \nu_n(\omega)\binom{n+k'}{k'}\binom{n+\omega}{n-1-k'}\right)_{\substack{0\le k \le n\\ 0\le k'\le n-1}},
\]
where ${\rm{GCD}}$ means the greatest common divisor.
Define
\begin{align*}
&\Delta=\Delta(\omega,\beta)=\nu(\omega)\cdot{\rm{den}}(\beta)\cdot\limsup_{n\to \infty}G_n(\omega)^{-1/n},\\
&Q=\rho_2(1,\beta)\cdot \Delta,\\
&E=\rho_2(1,\beta)\cdot \Delta^{-1},
\end{align*}
and assume $E>1$. Then the real number $(1-1/\beta)^{\omega}$ is irrational, and its irrationality exponent satisfies
\[
    \mu((1-1/\beta)^{\omega})\le 1+\dfrac{\log(Q)}{\log(E)}.
\]
In particular, since $\Delta \le \nu(\omega){\rm{den}}(\beta)$, we have
\[
    \mu((1-1/\beta)^{\omega})\le 1+\dfrac{\log\rho_2(1,\beta)+\log \nu(\omega)+\log {\rm{den}}(\beta)}{\log\rho_2(1,\beta)-\log \nu(\omega)-\log{\rm{den}}(\beta)}.
\]
\end{corollary}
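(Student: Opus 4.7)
The plan is to deduce Corollary~\ref{binom} from Theorem~\ref{main} applied with $(\alpha, \gamma, \delta) = (1, -1, 1+\omega)$, combined with a sharpening of the denominator estimate specific to the binomial case. With these parameters, the series $f(z)$ of \eqref{our function} specializes to $z^{-1}(1 - z^{-1})^\omega$, and the hypotheses of Theorem~\ref{main} hold: $|\beta| > |\alpha| = 1$, $\gamma = -1 \ge -1$, and $\omega \in \Q \setminus \Z$ gives $\delta = 1+\omega \notin \N$ and $-(\alpha\gamma+\delta) = -\omega \notin \N$. Since $\beta$ is a nonzero rational, the identity $\beta f(\beta) = (1 - 1/\beta)^\omega$ yields $\mu\bigl((1-1/\beta)^\omega\bigr) = \mu\bigl(f(\beta)\bigr)$; moreover, the roots of the characteristic polynomial $P(X) = X^2 - 2(2\beta-1)X + 1$ have product $1$, so $\rho_1(1,\beta) = 1/\rho_2(1,\beta)$, which matches the definition of $E$ in the corollary.

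A direct substitution of ${\rm{den}}(1) = {\rm{den}}(-1) = 1$, $\nu(-1) = 1$ and $\nu((1+\omega)/1) = \nu(\omega)$ into the formula for $\Delta$ of Theorem~\ref{main} would produce the bound $e \cdot {\rm{den}}(\beta)\cdot \nu(\omega)$, already giving Corollary~\ref{binom} with the weaker estimate $\Delta \le \nu(\omega){\rm{den}}(\beta)$ apart from the spurious factor $e$. To remove it and obtain the sharper $\Delta$ in the statement, the plan is to re-examine the arithmetic analysis of Section~\ref{quotient} in this special case: the Pad\'e approximants $\bigl(P_{n,0}(z), P_{n,1}(z)\bigr)$ constructed in Section~\ref{padeformal} specialize to polynomials whose coefficients are, up to sign, the rational numbers
\[
    \binom{n+k-1}{k}\binom{n-\omega-1}{n-k} \quad (0 \le k \le n) \AND \binom{n+k'}{k'}\binom{n+\omega}{n-1-k'} \quad (0 \le k' \le n-1).
\]
Multiplication by $\nu_n(\omega)$ turns them into integers whose greatest common divisor is exactly $G_n(\omega)$ by definition. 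Thus one can clear denominators by multiplying by only $\nu_n(\omega)/G_n(\omega)$ before applying the rational approximation criterion, which replaces the worst-case factor $e \cdot \nu(\omega)$ (a uniform bound on the $(\gamma+2)_k$-contribution in the proof of Theorem~\ref{main}) by the sharper $\nu(\omega) \cdot \limsup_n G_n(\omega)^{-1/n}$, recovering the $\Delta$ of the corollary.

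The main difficulty lies in verifying the explicit form of the coefficients of $(P_{n,0}, P_{n,1})$ for $(\alpha, \gamma, \delta) = (1, -1, 1+\omega)$ by specializing the formal construction of Section~\ref{padeformal}; after this, the rest of the argument mirrors the proof of Theorem~\ref{main} and relies on the simultaneous rational approximations theorem of Section~\ref{parametricgn}, now with the improved $\Delta$. The concluding ``in particular'' bound is then an immediate consequence of the trivial inequality $G_n(\omega) \ge 1$, which gives $\limsup_n G_n(\omega)^{-1/n} \le 1$ and hence $\Delta \le \nu(\omega){\rm{den}}(\beta)$; substituting this into the definitions of $Q$ and $E$ and using $\rho_1(1,\beta) = 1/\rho_2(1,\beta)$ produces the displayed explicit closed-form estimate.
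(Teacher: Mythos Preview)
Your proposal is correct and follows essentially the same approach as the paper: the paper's proof of Theorem~\ref{main} in Section~\ref{mainproof} explicitly says that Corollary~\ref{binom} is obtained by replacing the general denominator factor $\kappa_n$ by the sharper one from Lemma~\ref{denominator P Q}(ii), namely $\kappa_n = \nu_n(\omega)\,{\rm den}(\beta)^n/G_n(\omega)$, which is exactly the refinement you describe. The explicit binomial form of the Pad\'e coefficients that you identify as ``the main difficulty'' is precisely the content of Corollary~\ref{binomial pade}, whose proof (especially for $P_{n,1}$) uses a uniqueness argument for Pad\'e approximants rather than a direct specialization computation; otherwise the route is identical.
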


\begin{corollary}[shifted exponential function]\label{exp shift}
Let $\gamma\in \Q$ with $\gamma\ge -1$. Assume $\beta\in\Q$ with $\beta\neq 0$. Then the real number $\exp_{\gamma}(1/\beta)$ is irrational and its irrationality exponent is equal to $2$.
\end{corollary}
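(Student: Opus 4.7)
The plan is to derive Corollary \ref{exp shift} as a direct specialization of the Main Theorem (Theorem \ref{main}) to the parameter choice $(\alpha,\delta) = (0,-1)$, with $\gamma$ as in the corollary. Under this choice, $\alpha i - \delta = 1$ for every $i\ge 1$, so the generic function $f$ of \eqref{our function} collapses to
\[
    f(z) = \sum_{k=0}^{\infty}\dfrac{1}{(\gamma+2)_k}\,\dfrac{1}{z^{k+1}} = \exp_{\gamma}(1/z),
\]
matching the definition in \eqref{eq: three special cases for f}. Thus Corollary \ref{exp shift} is the statement of Theorem \ref{main} for this specific parameter triple.

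Next I would verify the hypotheses of Theorem \ref{main}. Since $\alpha = 0$, the inequality $|\beta| > |\alpha|$ is equivalent to $\beta \neq 0$, which holds by assumption. As $\N$ consists of strictly positive integers, $\alpha\N = \{0\}$, so the two exclusion conditions $\delta\notin \alpha\N$ and $-(\alpha\gamma+\delta)\notin \alpha\N$ read $-1 \neq 0$ and $1\neq 0$, both trivially true. The hypothesis $\gamma \ge -1$ is given in the corollary.

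It remains only to invoke the final sentence of Theorem \ref{main}, which asserts that when $\alpha = 0$ the value $f(\beta)$ is irrational and satisfies $\mu(f(\beta)) = 2$. This is precisely the conclusion of Corollary \ref{exp shift}, so the proof is complete. There is no substantial obstacle here, as the argument is fully absorbed into the Main Theorem; heuristically, the optimal value $\mu = 2$ is reached (rather than the generic upper bound $1 + \log(Q)/\log(E)$ of the $\alpha\neq 0$ regime) because when $\alpha = 0$ the coefficients $1/(\gamma+2)_k$ of $f$ decay factorially, forcing the remainders $R_n(\beta)$ of the Pad\'e approximants constructed in Section \ref{padeformal} to tend to zero super-geometrically while the denominators of $P_{n,0}(\beta)$ and $P_{n,1}(\beta)$ grow only geometrically.
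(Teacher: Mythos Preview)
Your proof is correct and is exactly the approach the paper takes: in Section~\ref{mainproof} the authors simply state that Corollary~\ref{exp shift} is a direct consequence of the main theorem, and you have spelled out this specialization in detail. (A minor quibble: in your closing heuristic, the common denominators $\kappa_n$ actually grow like $n!$ times a geometric factor rather than purely geometrically, but this does not affect the formal argument.)
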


\begin{remark}
  Note that the function $\exp_\gamma(z)$ is transcendental, so by Siegel-Shidlovskii's theorem 
  (see for example \cite[Theorem 5.5]{FelNest1998}), the number
  $\exp_\gamma(1/\beta)$ is a transcendental real number.
\end{remark}

\section{Explicit construction of Pad\'{e} approximants and Poincar\'{e}-type recurrence}\label{padeformal}

Throughout the article, the letter $n$ denotes a non-negative rational integer. Unless stated otherwise, the Landau symbols $\GrO$ and small $o$ refer when $n$ tends to infinity. Let $K$ be a field of characteristic $0$ and let $\alpha,\gamma,\delta\in K$ with $\gamma \notin \{-2,-3,\dots\}$. The Laurent series
\[
    f(z):=\sum_{k=0}^{\infty}\dfrac{\prod_{i=1}^k(\alpha i-\delta)}{(\gamma+2)_k}\dfrac{1}{z^{k+1}}
\]
satisfies $L(f)=0$, where $L$ is the differential operator
\begin{align*}
    L:=\dfrac{d}{dz}\left(-(z-\alpha)z\dfrac{d}{dz}+\gamma z+\delta \right)\in K\Big[z,\frac{d}{dz}\Big].
\end{align*}
The goal of this section is to construct explicit Pad\'e approximants $(P_{n,0},P_{n,1})_{n\geq 0}$ of $f(z)$ and to show that the sequences $(P_{n,0})_{n\geq 0}$ and $(P_{n,1})_{n\geq 0}$ satisfy
a certain Poincar\'e-type recurrence
of order $2$. This is done respectively in Proposition~\ref{pade f} and Lemma~\ref{rec rel}. For that purpose, define the $K$-homomorphism $\varphi_f:K[t]\rightarrow K$ by
\[
    \varphi_f(t^k) = \dfrac{\prod_{i=1}^k(\alpha i-\delta)}{(\gamma+2)_k} \qquad (k\geq 0).
\]
The above function extends naturally in a $K[z]$-homomorphism $\varphi_f: K[z,t]\rightarrow K[z]$, and then to a $K[z]$-homomorphism $\varphi_f: K[z,t][[1/z]]\rightarrow K[z][[1/z]]$. With this notation, the formal Laurent series $f(z)$ satisfies the following crucial identity
\[
    f(z)=\varphi_f \left(\dfrac{1}{z-t}\right).
\]
We denote by $(1/z^{\ell})$ the ideal of $K[[1/z]]$ generated by $1/z^{\ell}$ for $\ell\in\N$. We first establish some useful properties satisfied by $\varphi_f$.

\begin{lemma} \label{decompose}
    Let $n\in\Z, n\geq 0$. Define the differential operator $RD_n$ by
    \[
        RD_n=\dfrac{1}{n!}\left(\dfrac{d}{dz}+\dfrac{\gamma z+\delta}{(z-\alpha)z}\right)^n\kern-5.5pt(z-\alpha)^nz^n\in K(z)\Big[\frac{d}{dz}\Big].
    \]
    Then, in the ring $K(z)[\tfrac{d}{dz}]$, we have the identity~$:$
    \[
        RD_n=\dfrac{1}{n!}RD_{1}(RD_{1}+2z-\alpha)\cdots (RD_{1}+(n-1)(2z-\alpha)).
    \]
\end{lemma}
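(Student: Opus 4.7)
The plan is to prove the identity by induction on $n$, reducing everything to the product rule via the commutation relation satisfied by the operator $D+h$ with multiplication by a rational function, where $D = \tfrac{d}{dz}$ and $h(z) = \tfrac{\gamma z + \delta}{(z-\alpha)z}$. Observe first that by definition $n!\,RD_n = (D+h)^n\cdot z^n(z-\alpha)^n$, viewed in $K(z)[\tfrac{d}{dz}]$, so the statement is equivalent to the factorization
\[
(D+h)^n\cdot z^n(z-\alpha)^n = RD_1(RD_1 + 2z-\alpha)\cdots(RD_1 + (n-1)(2z-\alpha)),
\]
with $RD_1 = (D+h)\cdot z(z-\alpha)$.

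The key technical tool is the commutation rule
\[
(D+h)\circ g = g\circ(D+h) + g'
\]
valid for any $g\in K(z)$, which is just the product rule rewritten. I will use it in the reverse direction: for any rational function $g$, one has $g\cdot(D+h) = (D+h)\cdot g - g'$. Applying this with $g = (z-\alpha)^{n-1}z^{n-1}$, a direct computation shows $g' = (n-1)(2z-\alpha)(z-\alpha)^{n-2}z^{n-2}$, and therefore
\[
(z-\alpha)^{n-1}z^{n-1}\cdot(D+h) = (D+h)\cdot(z-\alpha)^{n-1}z^{n-1} - (n-1)(2z-\alpha)(z-\alpha)^{n-2}z^{n-2}.
\]

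I will then feed this into the product $RD_{n-1}\cdot RD_1 = \tfrac{1}{(n-1)!}(D+h)^{n-1}\cdot(z-\alpha)^{n-1}z^{n-1}\cdot(D+h)\cdot z(z-\alpha)$: substituting the commutation identity on the inner factor and recombining, the first term produces exactly $n\cdot RD_n$, while the second term gives $-(n-1)\,RD_{n-1}\cdot(2z-\alpha)$. Rearranging yields the right-multiplication recurrence
\[
n\cdot RD_n = RD_{n-1}\bigl(RD_1 + (n-1)(2z-\alpha)\bigr),
\]
which is exactly what is needed for the induction step: assuming the factorization for $n-1$, multiplying on the right by $\bigl(RD_1 + (n-1)(2z-\alpha)\bigr)$ and dividing by $n$ gives it for $n$. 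The base case $n=1$ is immediate from the definitions (and $n=0$ gives the identity operator on both sides).

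The only subtle point is keeping track of composition order, since $RD_1$ and the multiplication operator $(2z-\alpha)$ do not commute; the recurrence above is specifically a \emph{right-multiplication} recurrence, which is compatible with expanding the product $RD_1(RD_1+2z-\alpha)\cdots(RD_1+(n-1)(2z-\alpha))$ from left to right. Beyond this bookkeeping, the proof is a short computation, and I expect no further difficulty.
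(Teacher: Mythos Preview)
Your proof is correct and essentially the same as the paper's. Both arguments hinge on the commutation rule $(D+h)\circ g = g\circ(D+h) + g'$ applied with $g$ a power of $a(z)=(z-\alpha)z$, leading to the identical right-multiplication recurrence $n\,RD_n = RD_{n-1}\bigl(RD_1 + (n-1)(2z-\alpha)\bigr)$; the paper just packages the computation more compactly by writing the key step as $(D+h)a^n = a^{n-1}\bigl(RD_1+(n-1)a'\bigr)$ and then applying $(D+h)^{n-1}$ on the left.
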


\begin{proof}
Set $a(z)=(z-\alpha)z$ and $b(z)=\gamma z+\delta$. Since we have
\begin{align*}
    \left(\dfrac{d}{dz}+\dfrac{b(z)}{a(z)}\right)a(z)^n &=\left[a(z)^{n-1}\left(\dfrac{d}{dz}+\dfrac{b(z)}{a(z)}\right)+(n-1)a'(z)a(z)^{n-2}\right]a(z)\\
    &=a(z)^{n-1}(RD_1+(n-1)a'(z)),
\end{align*}
where $a'(z)$ denotes the derivative of $a(z)$, we obtain the assertion.
\end{proof}

\begin{lemma} \label{kernel}
    Define the differential operator
    \begin{align*}
    &\mathcal{E}=\dfrac{d}{dt}+\dfrac{\gamma t+\delta}{t(t-\alpha)}\in K(t)\Big[\frac{d}{dt}\Big].
    \end{align*}
    We have $\mathcal{E}(I)\subseteq {\rm{ker}}\, \varphi_f$, where $I$ denotes the ideal of $K[t]$ generated by $t(t-\alpha)$.
    More generally, for each non-negative integers $k,n$ and each $P(t)\in I^n$, we have
    \begin{align}\label{eq: formula phi_f}
        \frac{1}{n!} \varphi_f\big(t^k\mathcal{E}^n(P(t))\big) = (-1)^n\binom{k}{n} \varphi_f\big(t^{k-n}P(t)\big).
    \end{align}
    In particular, if $k<n$, then $\varphi_f\big(t^k\mathcal{E}^n(P(t))\big) = 0$.
\end{lemma}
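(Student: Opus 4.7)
The plan is to first establish the base case $\mathcal{E}(I) \subseteq \ker \varphi_f$ by an explicit computation on a generating set of $I$, and then obtain the general identity \eqref{eq: formula phi_f} by induction on $n$, using two structural facts: (a) $\mathcal{E}$ maps $I^n$ into $I^{n-1}$ for $n \geq 1$, and (b) the Leibniz-type shift identity
\[
    \mathcal{E}(t^k P) = k\, t^{k-1} P + t^k \mathcal{E}(P),
\]
which follows directly from the product rule. Since $I$ is spanned as a $K$-vector space by the polynomials $\{t^{m+1}(t-\alpha) : m \geq 0\}$, a direct calculation gives
\[
    \mathcal{E}\bigl(t^{m+1}(t-\alpha)\bigr) = (m+\gamma+2)\, t^{m+1} + (\delta - \alpha(m+1))\, t^m,
\]
and applying $\varphi_f$ together with the recursion $(\gamma+2)_{m+1} = (m+\gamma+2)(\gamma+2)_m$ shows that the two contributions cancel exactly. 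This proves $\mathcal{E}(I) \subseteq \ker \varphi_f$. Combined with (b) applied to $t^k P \in I$, it yields the case $n=1$ of the formula: $\varphi_f(t^k \mathcal{E}(P)) = -k\, \varphi_f(t^{k-1} P)$ for $P \in I$.

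Fact (a) comes from the explicit identity
\[
    \mathcal{E}\bigl((t(t-\alpha))^n R\bigr) = (t(t-\alpha))^{n-1}\bigl[n(2t-\alpha) R + t(t-\alpha) R' + (\gamma t + \delta) R\bigr] \in I^{n-1}.
\]
Assuming \eqref{eq: formula phi_f} at rank $n-1$ and setting $Q := \mathcal{E}(P) \in I^{n-1}$, I would apply the induction hypothesis to $\mathcal{E}^{n-1}(Q)$, then invoke the $n=1$ case (valid since $P \in I^n \subseteq I$) on $\mathcal{E}(P)$, and finally conclude using the combinatorial identity
\[
    \tfrac{1}{n}\,\binom{k}{n-1}(k-n+1) = \binom{k}{n},
\]
which is valid for all non-negative integers $k,n$ with the convention $\binom{k}{n} = 0$ when $k < n$. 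The concluding assertion is then immediate: if $k < n$, the right-hand side of \eqref{eq: formula phi_f} vanishes because $\binom{k}{n} = 0$, while $t^{k-n} P$ is nonetheless a bona fide polynomial since $P \in I^n \subseteq (t^n)$.

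The main obstacle is purely bookkeeping: one must verify that the expressions $t^{k-n} P$ continue to make sense as polynomials when $k < n$ (which is ensured by $t^n \mid P$), and track the binomial coefficients carefully through the induction. No deeper idea is required beyond the Leibniz-type shift identity and the cancellation observed in the base case; the mechanism is very much in the spirit of integration by parts against a weight, with $\mathcal{E}$ playing the role of the adjoint of multiplication by $t$ with respect to $\varphi_f$.
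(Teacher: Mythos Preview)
Your proof is correct and follows essentially the same approach as the paper: both establish $\mathcal{E}(I)\subseteq\ker\varphi_f$ by direct computation on the generators $t^{m+1}(t-\alpha)$, then use the commutation/Leibniz identity $t^k\mathcal{E}=\mathcal{E}t^k-kt^{k-1}$ together with $\mathcal{E}(I^m)\subseteq I^{m-1}$ to run an induction. The paper organizes the induction slightly differently---reducing $(k,n)$ to $(k-1,n-1)$ in a single step, so that iterating gives $\varphi_f(t^k\mathcal{E}^n(P))=(-k)(-k+1)\cdots(-(k-n+1))\varphi_f(t^{k-n}P)$ directly---whereas you first apply the rank-$(n-1)$ hypothesis and then the $n=1$ case, but the substance is identical.
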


\begin{proof}
    Let $k,n$ be non-negative integers. We first prove that $\mathcal{E}(I)\subseteq {\rm{ker}}\, \varphi_f$. It is sufficient to prove that $\mathcal{E}(t^{k+1}(t-\alpha))\in {\rm{ker}}\, \varphi_f$. By definition of $\mathcal{E}$, we have $\mathcal{E}(t^{k+1}(t-\alpha))=(k+2+\gamma)t^{k+1}-(\alpha(k+1)-\delta)t^k$. Hence
    \begin{align*}
        \varphi_f(\mathcal{E}(t^{k+1}(t-\alpha)))&=(k+2+\gamma)\dfrac{\prod_{i=1}^{k+1}(\alpha i-\delta)}{(\gamma+2)_{k+1}}-(\alpha(k+1)-\delta)\dfrac{\prod_{i=1}^k(\alpha i-\delta)}{(\gamma+2)_k}=0.
    \end{align*}
    The identity \eqref{eq: formula phi_f} trivially holds if $n=0$. From above, it also holds if $k=0$ (both sides are then equal to $0$), so we may assume $k,n > 0$. Since $t^k\mathcal{E}=\mathcal{E}t^k-kt^{k-1}$ and $t^k\mathcal{E}^{n-1}(I^n)\subseteq I$, we find
    \begin{align*}
         \varphi_f\big(t^k\mathcal{E}^n(P(t))\big) =  \varphi_f\big(\mathcal{E}(t^k\mathcal{E}^{n-1}(P(t)))\big) - k \varphi_f\big(t^{k-1}\mathcal{E}^{n-1}(P(t))\big)
         = - k \varphi_f\big(t^{k-1}\mathcal{E}^{n-1}(P(t))\big).
    \end{align*}
    Then by induction, the identity \eqref{eq: formula phi_f} follows.
\end{proof}

\begin{lemma} \label{cal n-coeff R_n}
    For non-negative integers $n, m$, we have
    \begin{align} \label{eq:lem coeff R_n}
        \varphi_f(t^m(t-\alpha)^n)=\dfrac{(-1)^n\prod_{i=1}^m(\alpha i-\delta)\prod_{j=1}^n(\alpha(\gamma+j)+\delta)}{(\gamma+2)_{n+m}}.
    \end{align}
\end{lemma}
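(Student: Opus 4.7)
The plan is to apply Lemma~\ref{kernel} to a well-chosen element of the ideal $I$ in order to derive a recursion relating $\varphi_f(t^m(t-\alpha)^{n+1})$ and $\varphi_f(t^m(t-\alpha)^n)$; iterating then yields the claim directly, without a nested double induction.

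Since $t^{m+1}(t-\alpha)^{n+1} = t(t-\alpha)\cdot t^m(t-\alpha)^n \in I$, Lemma~\ref{kernel} ensures that $\mathcal{E}(t^{m+1}(t-\alpha)^{n+1})$ lies in $\ker\varphi_f$. A direct Leibniz computation gives
$$\mathcal{E}\bigl(t^{m+1}(t-\alpha)^{n+1}\bigr) = t^m(t-\alpha)^n\bigl[(m+n+2+\gamma)\,t - ((m+1)\alpha - \delta)\bigr].$$
Rewriting $t = (t-\alpha)+\alpha$ inside the bracket and regrouping, this expression becomes
$$(m+n+2+\gamma)\,t^m(t-\alpha)^{n+1} + \bigl((\gamma+n+1)\alpha + \delta\bigr)\,t^m(t-\alpha)^n.$$
Applying $\varphi_f$ and using that this lies in $\ker\varphi_f$ yields the recursion
$$\varphi_f\bigl(t^m(t-\alpha)^{n+1}\bigr) \;=\; -\,\frac{(\gamma+n+1)\alpha + \delta}{m+\gamma+n+2}\;\varphi_f\bigl(t^m(t-\alpha)^n\bigr).$$

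Iterating this recursion from $n=0$ (with initial value $\varphi_f(t^m) = \prod_{i=1}^m(\alpha i - \delta)/(\gamma+2)_m$ coming from the definition of $\varphi_f$), and using the Pochhammer factorization $(\gamma+2)_m\cdot(m+\gamma+2)_n = (\gamma+2)_{m+n}$, one recovers exactly the right-hand side of~\eqref{eq:lem coeff R_n}. The only slightly delicate step is the explicit evaluation of $\mathcal{E}(t^{m+1}(t-\alpha)^{n+1})$ and the rearrangement of its bracket into a linear combination of $t^m(t-\alpha)^{n+1}$ and $t^m(t-\alpha)^n$; once this has been done, the rest is formal manipulation of Pochhammer symbols.
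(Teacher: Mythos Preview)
Your proof is correct, but it follows a genuinely different route from the paper's. The paper simply expands $t^m(t-\alpha)^{n+1} = t^{m+1}(t-\alpha)^n - \alpha\, t^m(t-\alpha)^n$ to obtain the two-index recurrence $A_{m,n+1} = A_{m+1,n} - \alpha A_{m,n}$, and then checks by direct computation that the closed form on the right of~\eqref{eq:lem coeff R_n} satisfies the same recurrence (the base case $n=0$ being immediate). By contrast, you feed $t^{m+1}(t-\alpha)^{n+1}\in I$ into Lemma~\ref{kernel} and extract the \emph{multiplicative} one-index recursion
\[
\varphi_f\bigl(t^m(t-\alpha)^{n+1}\bigr) = -\,\frac{\alpha(\gamma+n+1)+\delta}{m+n+2+\gamma}\,\varphi_f\bigl(t^m(t-\alpha)^n\bigr),
\]
which telescopes directly into the desired product. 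Your approach has the advantage that no separate verification of the closed form is needed: the recursion already has the right shape, and the Pochhammer identity $(\gamma+2)_m\,(m+\gamma+2)_n=(\gamma+2)_{m+n}$ finishes it off. The paper's approach, on the other hand, is more elementary in that it does not invoke Lemma~\ref{kernel} or the operator $\mathcal{E}$ at all; it relies only on the definition of $\varphi_f$ and a trivial algebraic identity. One small point worth making explicit in your write-up: the division by $m+n+2+\gamma$ is legitimate because the standing hypothesis $\gamma\notin\{-2,-3,\dots\}$ guarantees this quantity is non-zero.
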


\begin{proof}
     Set $A_{m,n}=\varphi_f(t^m(t-\alpha)^n)$ and denote by $\widetilde{A}_{m,n}$ the right-hand side of \eqref{eq:lem coeff R_n}. We need to prove that
    \begin{align} \label{equality A}
        A_{m,n}=\widetilde{A}_{m,n}.
    \end{align}
    It is trivial if $n=0$ by definition of $\varphi_f$. Note that the sequence $(A_{m,n})_{n,m\geq 0}$ satisfies the following recurrence relation:
    \begin{align}
    A_{m,n+1}&=\varphi_f(t^{m}(t-\alpha)^{n+1})=\varphi_f(t^{m+1}(t-\alpha)^{n}-\alpha t^{m}(t-\alpha)^{n})=A_{m+1,n}-\alpha A_{m,n} \label{recurrence A}
    \end{align}
    for each $n,m\geq 0$. To prove $(\ref{equality A})$, it suffices to show that $(\widetilde{A}_{m,n})_{n,m\geq 0}$ satisfies the same recurrence, which follows by a straightforward calculation.
\end{proof}

Using the identity $z^k-t^k = (z-t)\sum_{\ell=0}^{k-1}t^{k-1-\ell}z^{\ell}$ for each $k\geq 1$, we deduce the following lemma.

\begin{lemma} \label{trivial}
    Let $P(z)=\sum_{k=0}^np_{k}z^k\in K[z]$. Then we have
    \[
        \dfrac{P(z)-P(t)}{z-t}=\sum_{\ell=0}^{n-1}\left(\sum_{k=\ell}^{n-1}p_{k+1}t^{k-\ell}\right)z^{\ell}.
    \]
\end{lemma}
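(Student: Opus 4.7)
The statement is essentially a bookkeeping identity, and the paper has already flagged the key ingredient: the factorization $z^k-t^k=(z-t)\sum_{\ell=0}^{k-1}t^{k-1-\ell}z^\ell$ for $k\ge 1$. My plan is to apply this termwise to $P(z)-P(t)$, divide by $z-t$, and then swap the order of summation to match the right-hand side.

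\textbf{Step 1: Reduce to a sum of $z^k-t^k$.} Since $P(z)=\sum_{k=0}^{n}p_k z^k$, the constant term cancels and I can write
\[
    P(z)-P(t) \;=\; \sum_{k=1}^{n}p_k\bigl(z^k-t^k\bigr).
\]

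\textbf{Step 2: Apply the factorization and divide.} Substituting the identity for $z^k-t^k$ and dividing by $z-t$ (which is legitimate in $K[z,t]$ since each summand is divisible by $z-t$), I obtain
\[
    \dfrac{P(z)-P(t)}{z-t} \;=\; \sum_{k=1}^{n}p_k\sum_{\ell=0}^{k-1}t^{k-1-\ell}z^{\ell}.
\]

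\textbf{Step 3: Swap the order of summation.} The index set $\{(k,\ell):1\le k\le n,\ 0\le \ell\le k-1\}$ equals $\{(k,\ell):0\le \ell\le n-1,\ \ell+1\le k\le n\}$. Performing the swap and reindexing $k\mapsto k+1$ on the inner sum gives
\[
    \sum_{\ell=0}^{n-1}z^{\ell}\sum_{k=\ell+1}^{n}p_k t^{k-1-\ell}
    \;=\; \sum_{\ell=0}^{n-1}\left(\sum_{k=\ell}^{n-1}p_{k+1}t^{k-\ell}\right)z^{\ell},
\]
which is exactly the claimed expression.

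There is no real obstacle here beyond index-juggling; the only thing to be careful about is the substitution in the inner sum ($k$ becomes $k+1$, so the lower bound $\ell+1$ becomes $\ell$ and the upper bound $n$ becomes $n-1$, while $p_k t^{k-1-\ell}$ becomes $p_{k+1}t^{k-\ell}$). Everything else is automatic, and the identity holds in $K[z,t]$ for any field $K$ of characteristic zero (in fact for any commutative ring).
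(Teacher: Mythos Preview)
Your proof is correct and follows exactly the approach the paper intends: apply the factorization $z^k-t^k=(z-t)\sum_{\ell=0}^{k-1}t^{k-1-\ell}z^\ell$ termwise, divide, and swap the order of summation. The paper in fact gives no more detail than the sentence preceding the lemma, so your write-up is a faithful expansion of that hint.
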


\begin{proposition} \label{pade f}
    Let $n\geq 0$ be an integer.

    \smallskip

    $({\rm{i}})$ Define the polynomials $P_{n,0}(z)$ and $P_{n,1}(z)$ by
    \begin{align*}
    &P_{n,0}(z)=\dfrac{1}{n!}\sum_{k=0}^n(-1)^k(n+\gamma+1)_{n-k}\binom{n}{k}\left(\prod_{i=0}^{k-1}(\alpha(n-i)-\delta)\right)z^{n-k},\\
    &P_{n,1}(z)=\sum_{\ell=0}^{n-1}\left(\sum_{k=\ell}^{n-1}(-1)^{n-k-1}\dfrac{(n+\gamma+1)_{k+1}}{(k+1)!}\dfrac{\prod_{i=0}^{n-k-2}(\alpha(n-i)-\delta)}{(n-k-1)!}\dfrac{\prod_{j=1}^{k-\ell}(\alpha j-\delta)}{(\gamma+2)_{k-\ell}}\right)z^{\ell}.
    \end{align*}
    Then $(P_{n,0}(z),P_{n,1}(z))$ is a Pad\'{e} approximant of $f(z)$ of weight $n$.

    \medskip

    $({\rm{ii}})$ Denote by $R_n(z)= P_{n,0}(z)f(z)-P_{n,1}(z)$. We have the formula
    \[
        R_n(z)=\sum_{k=n}^{\infty}\binom{k}{n}\dfrac{\prod_{i=1}^k(\alpha i-\delta)\prod_{j=1}^n(\alpha(\gamma+j)+\delta)}{(\gamma+2)_{n+k}}\dfrac{1}{z^{k+1}}.
    \]
\end{proposition}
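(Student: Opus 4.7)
The strategy is to tie $P_{n,0}(t)$ to a Rodrigues-type expression involving the operator $\mathcal{E}$ from Lemma~\ref{kernel}, thereby reducing parts~(i) and~(ii) to a single calculation of $\varphi_f(t^k P_{n,0}(t))$ for $k\ge 0$.

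First I would apply Lemma~\ref{trivial} to $P_{n,0}(z)=\sum_{k=0}^n p_k z^k$ and then apply $\varphi_f$ in the variable $t$, using $\varphi_f(t^{k-\ell})=\prod_{i=1}^{k-\ell}(\alpha i-\delta)/(\gamma+2)_{k-\ell}$. Substituting the coefficients $p_{k+1}$ read off from the definition of $P_{n,0}$, a direct termwise matching gives
\[
\varphi_f\!\left(\dfrac{P_{n,0}(z)-P_{n,0}(t)}{z-t}\right)=P_{n,1}(z).
\]
Combining this with the key identity $f(z)=\varphi_f(1/(z-t))$ and expanding $1/(z-t)$ as a formal Laurent series in $1/z$ yields
\[
R_n(z)=\varphi_f\!\left(\dfrac{P_{n,0}(t)}{z-t}\right)=\sum_{k=0}^{\infty}\varphi_f\!\big(t^k P_{n,0}(t)\big)\,z^{-k-1}.
\]

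The crux is the Rodrigues-type identity
\[
n!\,P_{n,0}(t)=\mathcal{E}^n\!\big((t(t-\alpha))^n\big).
\]
When $\alpha\ne 0$, the cleanest route uses the integrating factor $F(t)=t^{-\delta/\alpha}(t-\alpha)^{\gamma+\delta/\alpha}$ (handled formally): since $\mathcal{E}(g)=F^{-1}(Fg)'$, we get $\mathcal{E}^n(g)=F^{-1}(Fg)^{(n)}$; applied to $g(t)=t^n(t-\alpha)^n$ and expanded via Leibniz's rule, this produces a polynomial that, after the rewrite $\alpha^k\prod_{i=0}^{k-1}(n-i-\delta/\alpha)=\prod_{i=0}^{k-1}(\alpha(n-i)-\delta)$ and the dual simplification for the rising factorials in $\gamma+\delta/\alpha$, matches the explicit formula defining $P_{n,0}$. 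An alternative (which also covers $\alpha=0$) is induction on $n$ using the factorization from Lemma~\ref{decompose}, applying the first-order operators $RD_1+(k-1)(2t-\alpha)$ to $1$ one at a time and tracking coefficients. This step is the main computational obstacle, requiring patient bookkeeping of rising factorials.

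Granting the Rodrigues identity, set $P(t)=(t(t-\alpha))^n\in I^n$. Lemma~\ref{kernel} immediately gives $\varphi_f(t^k P_{n,0}(t))=0$ for every $0\le k<n$, and for $k\ge n$,
\[
\varphi_f\!\big(t^k P_{n,0}(t)\big)=(-1)^n\binom{k}{n}\,\varphi_f\!\big(t^{k-n}(t(t-\alpha))^n\big)=(-1)^n\binom{k}{n}\,\varphi_f\!\big(t^k(t-\alpha)^n\big).
\]
The vanishing for $k<n$ yields $R_n(z)\in(1/z^{n+1})$; since $\deg P_{n,0}\le n$ and $\deg P_{n,1}\le n-1$, the pair $(P_{n,0},P_{n,1})$ is a Pad\'e approximant of $f$ of weight $n$, which proves~(i). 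Finally, Lemma~\ref{cal n-coeff R_n} with $m=k$ evaluates $\varphi_f(t^k(t-\alpha)^n)$; the two factors $(-1)^n$ cancel, and substituting the resulting closed form back into the displayed series for $R_n(z)$ produces, after re-indexing from $k=n$, the formula claimed in~(ii).
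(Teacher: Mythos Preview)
Your proposal is correct and follows essentially the same architecture as the paper: establish the Rodrigues identity $n!\,P_{n,0}(t)=\mathcal{E}^n\big(t^n(t-\alpha)^n\big)$, invoke Lemma~\ref{kernel} to get vanishing of $\varphi_f(t^kP_{n,0}(t))$ for $k<n$ and the binomial formula for $k\ge n$, then finish with Lemmas~\ref{cal n-coeff R_n} and~\ref{trivial}. The only real difference is in how the Rodrigues identity is verified: the paper works in the variable $z$, sets $P_{n,0}^{(m)}(z)=RD_n(z^m)$, and uses the factorization of Lemma~\ref{decompose} to derive a two-term recurrence in $(n,m)$ that the explicit formula also satisfies (this is precisely your ``alternative'' route); your primary suggestion via the integrating factor $F(t)=t^{-\delta/\alpha}(t-\alpha)^{\gamma+\delta/\alpha}$ and Leibniz is a legitimate and more classical shortcut that the paper does not use, though it requires the caveat $\alpha\neq 0$ and some care with the formal handling of non-integer exponents.
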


\begin{proof}
    $({\rm{i}})$ Recall that
    \[
        RD_n=\dfrac{1}{n!}\left(\dfrac{d}{dz}+\dfrac{\gamma z+\delta}{z(z-\alpha)}\right)^nz^n(z-\alpha)^{n}\in K\Big[z,\frac{d}{dz}\Big]
    \]
    and consider the polynomials
    \[
        P_{n,0}(z):= RD_n(1)\AND P_{n,1}(z):=\varphi_f\left(\dfrac{P_{n,0}(z)-P_{n,0}(t)}{z-t}\right).
    \]
    We first prove that $(P_{n,0},P_{n,1})$ is a Pad\'{e} approximant of $f(z)$ of weight $n$. We have
    \[
        P_{n,0}(z)f(z)-P_{n,1}(z)=\varphi_f\left(\dfrac{P_{n,0}(t)}{z-t}\right)=\sum_{k=0}^{\infty}\dfrac{\varphi_f(t^kP_{n,0}(t))}{z^{k+1}},
    \]
    it is therefore sufficient to show that
    \begin{align*}\label{relations}
        \varphi_f(t^kP_{n,0}(t))=0 \ \  \text{for} \ \ 0\le k \le n-1.
    \end{align*}
    Since $n!P_{n,0}(t) = \mathcal{E}^n(t^n(t-\alpha)^n)$, it is a direct consequence of Lemma \ref{kernel}. We now prove that
    \[
        RD_n(1)=\dfrac{1}{n!}\sum_{k=0}^n(-1)^k(n+\gamma+1)_{n-k}\binom{n}{k}\left(\prod_{i=0}^{k-1}(\alpha(n-i)-\delta)\right)z^{n-k}.
    \]
    Let $n,m$ be non-negative integers and set
    \[
        P^{(m)}_{n,0}(z):=RD_n(z^m),
    \]
    so that $P_{n,0}(z)=P_{n,0}^{(0)}(z)$. We also define
    \[
        \widetilde{P}^{(m)}_{n,0}(z):=\dfrac{1}{n!}\sum_{k=0}^{n}(-1)^{k}(n+m+1+\gamma)_{n-k}\binom{n}{k}  \left(\prod_{i=0}^{k-1}(\alpha(n+m-i)-\delta)\right)z^{n+m-k},
    \]
    with the convention that $\prod_{i=0}^{k-1}(\alpha(n+m-i)-\delta)=1$ if $k=0$. We claim that
    \begin{align} \label{equality}
        P^{(m)}_{n,0}(z)=\widetilde{P}^{(m)}_{n,0}(z).
    \end{align}
    It is obvious when $n=0$ (the both sides are equal to $z^m$). For each non-negative integer $k$, we have
    \[
        (RD_1+k(2z-\alpha))(z^m) = (2(k+1)+m+\gamma)z^{m+1}-(\alpha(m+1+k)-\delta)z^m,
    \]
    and combined with Lemma $\ref{decompose}$, it yields
    \begin{align}
    P^{(m)}_{n,0}(z)&=\dfrac{1}{n!}RD_1(RD_1+2z-\alpha)\cdots (RD_1+(n-1)(2z-\alpha))(z^m) \nonumber\\
    &=\dfrac{1}{n!}RD_1(RD_1+2z-\alpha)\cdots (RD_1+(n-2)(2z-\alpha))[(2n+m+\gamma)z^{m+1}-(\alpha(m+n)-\delta)z^m] \nonumber\\
    &=\dfrac{1}{n}RD_{n-1}[(2n+m+\gamma)z^{m+1}-(\alpha(m+n)-\delta)z^m]\nonumber\\
    &=\dfrac{2n+m+\gamma}{n}P^{(m+1)}_{n-1,0}(z)-\dfrac{\alpha(m+n)-\delta}{n}P^{(m)}_{n-1,0}(z). \label{recurrence P}
    \end{align}
    To get $(\ref{equality})$, it remains to show that $(\widetilde{P}^{(m)}_{n,0}(z))_{n,m\geq 0}$ satisfies the recurrence relation $(\ref{recurrence P})$, which follows by a straightforward calculation. The identity for $P_{n,1}(z)$ is obtained by combining the formula obtained for $P_{n,0}(z)$ and Lemma~\ref{trivial}.

    \medskip

    $({\rm{ii}})$ By definition of $R_n(z)$ and by combining \eqref{eq: formula phi_f} with Lemma $\ref{cal n-coeff R_n}$, we find
    \begin{align*}
        R_n(z)&=\sum_{k=n}^{\infty}\dfrac{\varphi_f(t^kP_{n,0}(t))}{z^{k+1}} = (-1)^n\sum_{k=n}^{\infty}\binom{k}{n}\dfrac{\varphi_f(t^{k}(t-\alpha)^n)}{z^{k+1}}
        =\sum_{k=n}^{\infty}\binom{k}{n}\dfrac{\prod_{i=1}^k(\alpha i-\delta)\prod_{j=1}^n(\alpha(\gamma+j)+\delta)}{(\gamma+2)_{n+k}}\dfrac{1}{z^{k+1}}.
    \end{align*}
\end{proof}

The following corollary will be important to estimate the denominator of $P_{n,0}(\beta)$ and $P_{n,1}(\beta)$ when $\beta\in\Q$ and $f(z)$ is a binomial function as in Corollary~\ref{binom}.

\begin{corollary} \label{binomial pade}
    Let $\omega\in K\setminus \Z$.
    We use the same notation as in Proposition $\ref{pade f}$. Suppose $(\alpha,\gamma,\delta)=(1,-1,1+\omega)$. Then we have
    \begin{align*}
        &P_{n,0}(z)=\sum_{k=0}^n(-1)^{n-k}\binom{n+k-1}{k}\binom{n-\omega-1}{n-k}z^k,\\
        &P_{n,1}(z)=\sum_{k=0}^{n-1}(-1)^{n-1-k}\binom{n+k}{k}\binom{n+\omega}{n-1-k}z^k.
    \end{align*}
\end{corollary}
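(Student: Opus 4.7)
The plan is to substitute $(\alpha,\gamma,\delta) = (1,-1,1+\omega)$ directly into the formulas of Proposition \ref{pade f}(i) and simplify using elementary binomial manipulations.

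For $P_{n,0}(z)$, the substitution gives $(n+\gamma+1)_{n-k} = (n)_{n-k} = (2n-k-1)!/(n-1)!$ together with the key rewriting
\[
\prod_{i=0}^{k-1}(\alpha(n-i)-\delta) = \prod_{i=0}^{k-1}(n-1-\omega-i) = k!\binom{n-\omega-1}{k}.
\]
Combining these with $\tfrac{1}{n!}\binom{n}{k}(n)_{n-k}\cdot k! = \binom{2n-k-1}{n-k}$, the coefficient of $z^{n-k}$ in $P_{n,0}(z)$ becomes $(-1)^k\binom{2n-k-1}{n-k}\binom{n-\omega-1}{k}$, and the reindexing $k\mapsto n-k$ yields the stated formula.

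For $P_{n,1}(z)$, I would proceed analogously, using in addition $(\gamma+2)_{k-\ell} = (k-\ell)!$ and $\prod_{j=1}^{k-\ell}(\alpha j-\delta) = (-1)^{k-\ell}(k-\ell)!\binom{\omega}{k-\ell}$. After substitution and the reindexing $j=k-\ell$, the inner sum defining the coefficient of $z^\ell$ becomes
\[
(-1)^{n-1-\ell}\sum_{j=0}^{n-1-\ell}\binom{n+\ell+j}{\ell+j+1}\binom{n-\omega-1}{n-1-\ell-j}\binom{\omega}{j},
\]
so the corollary reduces to the Chu--Vandermonde-type identity
\[
\sum_{j=0}^{m}\binom{n+\ell+j}{\ell+j+1}\binom{n-\omega-1}{m-j}\binom{\omega}{j} \;=\; \binom{n+\ell}{\ell}\binom{n+\omega}{m}, \qquad m := n-1-\ell.
\]

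The main obstacle is this hypergeometric identity. I would prefer to handle it either by recognizing the left-hand side as a balanced ${}_3F_2$ and applying the Pfaff--Saalschütz theorem, or by creative telescoping (Zeilberger's algorithm), which algorithmically certifies a first-order recurrence in $m$ satisfied by both sides (one then verifies the base case $m=0$). A perhaps cleaner conceptual alternative is to invoke the uniqueness of Pad\'e approximants: denoting by $\tilde P_{n,0},\tilde P_{n,1}$ the polynomials on the right-hand side of the corollary, one checks that $\deg \tilde P_{n,0}=n$ with leading coefficient $\binom{2n-1}{n}$ agreeing with that of $P_{n,0}$, and that $\tilde P_{n,0}(z)f(z)-\tilde P_{n,1}(z)=O(z^{-(n+1)})$ (using $f(z)=z^{-1}(1-1/z)^{\omega}=\sum_{k\ge 0}(-1)^k\binom{\omega}{k}z^{-k-1}$, equivalently $\varphi_f(t^m)=(-1)^m\binom{\omega}{m}$). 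Uniqueness under these normalizations then forces $(\tilde P_{n,0},\tilde P_{n,1})=(P_{n,0},P_{n,1})$ and yields the corollary.
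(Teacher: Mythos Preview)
Your treatment of $P_{n,0}$ is exactly what the paper does: direct substitution into Proposition~\ref{pade f}(i) and elementary rewriting. The computations you sketch are correct.

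For $P_{n,1}$ the situation is different. Your reduction to the identity
\[
\sum_{j=0}^{m}\binom{n+\ell+j}{\ell+j+1}\binom{n-\omega-1}{m-j}\binom{\omega}{j}=\binom{n+\ell}{\ell}\binom{n+\omega}{m},\qquad m=n-1-\ell,
\]
is correct, and this can indeed be established by Zeilberger's algorithm (whether it is literally Pfaff--Saalsch\"utz is less clear, since the constraint $m=n-1-\ell$ is needed). However, your ``cleaner conceptual alternative'' via uniqueness is circular as written: once you know $\tilde P_{n,0}=P_{n,0}$, checking that $\tilde P_{n,0}f-\tilde P_{n,1}=O(z^{-(n+1)})$ amounts exactly to checking that $\tilde P_{n,1}$ is the polynomial part of $P_{n,0}f$, i.e.\ that $\tilde P_{n,1}=P_{n,1}$, which is the claim. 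Verifying it ``using $\varphi_f(t^m)=(-1)^m\binom{\omega}{m}$'' is just the hypergeometric identity again.

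The paper avoids the full identity by a duality trick you do not mention. It observes (via formula~\eqref{equality}) that $\tilde P_{n,1}(z)\,z^2=RD_{n-1,-\omega}(z^2)$, so $\tilde P_{n,1}z^2$ is itself a Pad\'e-type numerator for $f_{-\omega}$. Using $f_\omega(z)^{-1}=z^2f_{-\omega}(z)$, one then compares the two relations $P_{n,0}-P_{n,1}z^2f_{-\omega}\in(1/z^n)$ and $\tilde P_{n,0}-\tilde P_{n,1}z^2f_{-\omega}\in(1/z^n)$. After checking by hand that the top three coefficients of $P_{n,1}$ and $\tilde P_{n,1}$ agree (so that $\deg(\tilde P_{n,1}z^2-P_{n,1}z^2)\le n-2$), the perfection of the binomial Pad\'e system (Jager, Mahler) forces the difference to vanish. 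So the paper trades your $m$-term summation identity for a three-coefficient check plus the $\omega\leftrightarrow-\omega$ symmetry; your approach is more computational but self-contained, while the paper's is shorter but relies on the cited perfection result.
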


\begin{proof}
    The identity for $P_{n,0}(z)$ is directly obtained by Proposition \ref{pade f} by our choice of parameters. We now prove the identity for $P_{n,1}(z)$. For $\omega\in K\setminus \Z$ and non-negative integer $n$, we write $f_{\omega}(z)=1/z\cdot (1-1/z)^{\omega}$ and
    \[
        RD_{n,\omega}:=\dfrac{1}{n!}\left(\dfrac{d}{dz}+\dfrac{-z+1+\omega}{(z-1)z}\right)^n(z-1)^nz^n.
    \]
    Set $\widetilde{P}_{n,1}(z):=\displaystyle\sum_{k=0}^{n-1}(-1)^{k+1}\binom{n+k}{k}\binom{n+\omega}{n-1-k}z^{k}$. Note that, by a straightforward calculation, the coefficients of $z^{n-1}$, $z^{n-2}$, $z^{n-3}$ of $P_{n,1}(z)$ and $\widetilde{P}_{n,1}(z)$ are respectively the same. By \eqref{equality}, we get
    \begin{align} \label{equal tilde Q}
        \widetilde{P}_{n,1}(z)\cdot z^2=RD_{n-1,-\omega}(z^2).
    \end{align}
    Recall that for each $\ell\in\N$, we denote by $(1/z^{\ell})$ the ideal of $K[[1/z]]$ generated by $1/z^{\ell}$. Since $R_n(z)=P_{n,0}(z)f_{\omega}(z)-P_{n,1}(z) \in (1/z^{n+1})$, by multiplying $R_n(z)$ by $f_{\omega}(z)^{-1}=z^2f_{-\omega}(z)$, we obtain
    \begin{align}\label{reminder 1}
        P_{n,0}(z)-P_{n,1}(z)z^2f_{-\omega}(z)\in(1/z^{n}).
    \end{align}
    By $(\ref{equal tilde Q})$, the same argument in the proof of Proposition $\ref{pade f}$ ensures that there exists a polynomial $\widetilde{P}_{n,0}(z)$ with
    \begin{align}\label{reminder *}
        \widetilde{P}_{n,0}(z)-\widetilde{P}_{n,1}(z)z^2f_{\omega}(z) \in (1/z^{n}).
    \end{align}
    Subtracting (\ref{reminder *}) from (\ref{reminder 1}), we get
    \[
        (\widetilde{P}_{n,0}(z)-P_{n,0}(z))-(\widetilde{P}_{n,1}(z)z^2-P_{n,1}(z)z^2)f_{-\omega}(z)\in (1/z^{n}).
    \]
    Since the coefficients of $z^{n-1},z^{n-2},z^{n-3}$ of $P_{n,1}(z)$ and $\widetilde{P}_{n,1}(z)$ coincide, we find
    \[
        {\rm{deg}}\, \big(\widetilde{P}_{n,1}(z)z^2-P_{n,1}(z)z^2\big)\le n-2.
    \]

    It is known that all weight $n-1$ Pad\'{e} approximants $(P_0(z),P_1(z))$ of $f_{-\omega}(z)$ satisfy ${\rm{deg}}\,P_0=n-1$ (confer \cite[p.\,208, II, Theorem $1.\,2.\,2$]{J} and \cite[p.\,96]{MPerf}). Hence $\widetilde{P}_{n,1}(z)z^2-P_{n,1}(z)z^2=0$, which completes the proof.
\end{proof}

\begin{lemma} \label{lem: Delta_n}
    Let $P_{n,0}(z),P_{n,1}(z)$ be the polynomials defined in Proposition~\ref{pade f} and set
    \[
        M_{2,n}=\begin{pmatrix}P_{n,0}(z) & P_{n,1}(z) \\ P_{n+1,0}(z) & P_{n+1,1}(z)\end{pmatrix}.
    \]
    Then we have
    \[
        {\rm{det}} \,M_{2,n}=\dfrac{(n+\gamma+2)_{n+1}}{(n+1)!}\cdot \dfrac{\prod_{i=1}^n(\alpha i-\delta)\prod_{j=1}^n(\alpha(\gamma+j)+\delta)}{(\gamma+2)_{2n}}.
    \]
    In particular, under the conditions $\delta\notin \alpha \N$ and $-(\alpha\gamma+\delta)\notin \alpha \N$, we have ${\rm{det}} \,M_{2,n}\neq 0$.
\end{lemma}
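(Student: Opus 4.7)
The plan is to reduce the determinant computation to an elementary degree count by feeding in the remainder functions $R_n(z) = P_{n,0}(z)f(z) - P_{n,1}(z)$ from Proposition~\ref{pade f}. Substituting $P_{n,1} = P_{n,0}f - R_n$ and $P_{n+1,1} = P_{n+1,0}f - R_{n+1}$ into the definition of $\det M_{2,n}$, the contributions involving $f(z)$ cancel, yielding the identity
\[
    \det M_{2,n} = R_n(z)\,P_{n+1,0}(z) - P_{n,0}(z)\,R_{n+1}(z)
\]
in $K[z][[1/z]]$.

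Next I would perform a valuation count at infinity. By Proposition~\ref{pade f}, $\deg P_{n,0} = n$, $\deg P_{n+1,0} = n+1$, and $R_n(z) \in (1/z^{n+1})$, $R_{n+1}(z) \in (1/z^{n+2})$. Hence $R_n(z)P_{n+1,0}(z)$ contains no positive power of $z$ and $P_{n,0}(z)R_{n+1}(z) \in (1/z^2)$. Since $\det M_{2,n}$ is simultaneously a polynomial in $z$ (as a determinant of polynomial entries) and a Laurent series without positive powers of $z$, it must be a \emph{constant}, and that constant equals the $z^0$-coefficient of $R_n(z)P_{n+1,0}(z)$ (the other term contributes nothing above $z^{-2}$).

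The third step is to read off this $z^0$-coefficient. The leading term of $R_n(z)$ corresponds to the index $k=n$ in Proposition~\ref{pade f}~(ii), namely
\[
    R_n(z) = \frac{\prod_{i=1}^n(\alpha i-\delta)\prod_{j=1}^n(\alpha(\gamma+j)+\delta)}{(\gamma+2)_{2n}}\cdot\frac{1}{z^{n+1}} + O(z^{-n-2}),
\]
while the leading term of $P_{n+1,0}(z)$ is obtained from $k=0$ in Proposition~\ref{pade f}~(i), giving $\frac{(n+\gamma+2)_{n+1}}{(n+1)!}z^{n+1}$. Multiplying these two leading terms yields exactly the announced value for $\det M_{2,n}$.

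The non-vanishing under the stated assumptions is then immediate: the hypothesis $\delta \notin \alpha\N$ ensures $\prod_{i=1}^n(\alpha i-\delta)\neq 0$; the hypothesis $-(\alpha\gamma+\delta)\notin \alpha\N$ ensures $\prod_{j=1}^n(\alpha(\gamma+j)+\delta)\neq 0$; and the Pochhammer quotient $(n+\gamma+2)_{n+1}/(\gamma+2)_{2n}$ is non-zero in view of the standing convention $\gamma\notin\{-2,-3,\dots\}$. The only point requiring care is the valuation bookkeeping; conceptually this is the standard Casorati-type identity for a Padé pair and there is no serious obstacle.
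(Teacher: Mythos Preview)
Your proof is correct and follows essentially the same approach as the paper: both replace the second column of $M_{2,n}$ by the remainders $R_n,R_{n+1}$ (via the substitution $P_{m,1}=P_{m,0}f-R_m$), observe that the resulting expression is a polynomial with no positive powers of $z$, and read off the constant from the product of the leading coefficients of $P_{n+1,0}$ and $R_n$. Your treatment of the non-vanishing is in fact slightly more detailed than the paper's, which leaves the check of the individual factors to the reader.
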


\begin{proof}
    We have
    \begin{align*}
        -{\rm{det}} \,M_{2,n}=
        \left|\begin{array}{cc}
            P_{n,0}(z) & R_n(z)\\
            P_{n+1,0}(z) & R_{n+1}(z)
        \end{array}\right|
        =P_{n,0}(z)R_{n+1}(z)-P_{n+1,0}(z)R_n(z).
    \end{align*}
    Since ${\rm{deg}}\,P_{n,0}=n$ and $R_n(z)\in (1/z^{n+1})$ for each $n\in \N$, we have $P_{n,0}(z)R_{n+1}(z)\in 1/zK[[1/z]]$ and
    \begin{align*}
        &P_{n+1,0}(z)R_n(z)\in \dfrac{(n+\gamma+2)_{n+1}}{(n+1)!}\dfrac{\prod_{i=1}^n(\alpha i-\delta)\prod_{j=1}^n(\alpha(\gamma+j)+\delta)}{(\gamma+2)_{2n}}+1/zK[[1/z]].
    \end{align*}
    We conclude by noting that ${\rm{det}} \,M_{2,n}\in K[z]$.
\end{proof}

We now prove that the sequences $(P_{n,0}(z))_{n\geq 0}$ and $(P_{n,1}(z))_{n\geq 0}$ forming the Pad\'e approximants satisfy a certain recurrence relation. This will allow us to use the Poincar\'e-Perron theorem to estimate their growth, when evaluated at $z=\beta\in\Q$ with $\beta$ large enough.

\begin{lemma} \label{rec rel}
    Assume $\delta\notin \alpha \N$ and $-(\alpha\gamma+\delta)\notin \alpha \N$. Then the sequences $(P_{n,0}(z))_{n\ge 0}$, $(P_{n,1}(z))_{n\ge 0}$ and $(R_n(z))_{n\ge 0}$ defined as in Proposition~\ref{pade f} satisfy the recurrence
    \begin{align}\label{recurrence}
        A_nX_{n+1}-(z-B_n)X_n+C_nX_{n-1}=0 \quad (n\ge 1),
    \end{align}
    where for each $n\in \N$, we have
    \[
        A_n:=\dfrac{(n+\gamma+1)(n+1)}{(2n+\gamma+1)(2n+\gamma+2)} , \ \ B_n:=\dfrac{2\alpha n^2+2\alpha n(1+\gamma)+\gamma(\alpha-\delta)}{(2n+\gamma)(2n+2+\gamma)}, \ \
        C_n:=\dfrac{(\alpha n-\delta)(\alpha(\gamma+n)+\delta)}{(2n+\gamma)(2n+\gamma+1)}.
    \]
\end{lemma}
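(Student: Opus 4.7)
The strategy is to first verify the recurrence on the remainder sequence $(R_n(z))_{n\ge 0}$ by a direct computation on its explicit series expansion from Proposition~\ref{pade f}(ii), and then to transfer it to $(P_{n,0})$ and $(P_{n,1})$ using the Pad\'e approximant structure. Writing $R_n(z)=\sum_{k\ge n}r_{n,k}/z^{k+1}$, the identity $A_n R_{n+1}-(z-B_n) R_n+C_n R_{n-1}=0$ amounts, after matching the coefficient of $1/z^{k+1}$ on both sides, to the scalar identity
\[
    A_n r_{n+1,k}+B_n r_{n,k}+C_n r_{n-1,k}=r_{n,k+1}\qquad(k\ge n-1),
\]
with the convention $r_{m,k}=0$ when $k<m$. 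The boundary case $k=n-1$ (only the $C_n$-term contributes) forces the value of $C_n$, and the case $k=n$ then fixes $B_n$, both by elementary manipulations of Pochhammer and binomial ratios. For $k\ge n+1$, dividing by $r_{n,k+1}$ and using the explicit consecutive ratio
\[
    \frac{r_{n,k+1}}{r_{n,k}}=\frac{(k+1)(\alpha(k+1)-\delta)}{(k-n+1)(\gamma+n+k+2)}
\]
together with analogous formulas for the $n$-shifts reduces the identity to a polynomial identity in $k$ (with $n,\alpha,\gamma,\delta$ as parameters), which I verify by clearing denominators.

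To transfer the recurrence to the polynomial sequences, I set
\begin{align*}
    Y_n(z) &:= A_n P_{n+1,0}(z)-(z-B_n) P_{n,0}(z)+C_n P_{n-1,0}(z), \\
    Z_n(z) &:= A_n P_{n+1,1}(z)-(z-B_n) P_{n,1}(z)+C_n P_{n-1,1}(z),
\end{align*}
so that $\deg Y_n\le n+1$ and $\deg Z_n\le n$. Using $R_m=P_{m,0} f-P_{m,1}$ together with the preceding step, $Y_n f-Z_n=A_n R_{n+1}-(z-B_n) R_n+C_n R_{n-1}=0$, whence $Y_n f=Z_n$ as formal Laurent series. Multiplying by $P_{n+1,0}$ and using $P_{n+1,0} f=P_{n+1,1}+R_{n+1}$ yields
\[
    Y_n R_{n+1}=P_{n+1,0} Z_n-Y_n P_{n+1,1}\in K[z].
\]
On the other hand, since $\deg Y_n\le n+1$ and $R_{n+1}\in z^{-(n+2)}K[[1/z]]$, the product $Y_n R_{n+1}$ lies in $z^{-1}K[[1/z]]$. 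As $K[z]\cap z^{-1}K[[1/z]]=\{0\}$, we deduce $Y_n R_{n+1}=0$ in the field $K((1/z))$. The hypotheses $\delta\notin\alpha\N$ and $-(\alpha\gamma+\delta)\notin\alpha\N$ imply that the leading coefficient $r_{n+1,n+1}$ of $R_{n+1}$ is a nonzero product of factors $\alpha i-\delta$ and $\alpha(\gamma+j)+\delta$, so $R_{n+1}\neq 0$ and therefore $Y_n=0$, forcing $Z_n=0$. This gives the recurrence for $(P_{n,0})$ and $(P_{n,1})$.

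The main obstacle is the polynomial identity in $k$ for $k\ge n+1$ in the first step: once $B_n$ and $C_n$ are fixed by the boundary cases, verifying the general identity requires careful bookkeeping with Pochhammer symbols, but is otherwise routine. The transfer step is then a formal consequence of the Pad\'e approximation setup and the non-vanishing of $R_{n+1}$.
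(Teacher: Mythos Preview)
Your argument is correct, but it proceeds in the opposite direction from the paper. The paper first establishes the recurrence for $(P_{n,0})_{n\ge 0}$ via the orthogonal polynomial machinery: since $\deg P_{n,0}=n$, the family $(P_{n,0})$ is a $K$-basis of $K[z]$, and the polynomials are orthogonal with respect to the bilinear form $\langle A,B\rangle_f=\varphi_f(AB)$; the standard three-term recurrence for orthogonal polynomials then gives \eqref{recurrence} for $(P_{n,0})$, with $A_n,B_n$ read off from leading coefficients and $C_n$ from the norm ratio $\langle P_{n,0},P_{n,0}\rangle_f/\langle P_{n-1,0},P_{n-1,0}\rangle_f$ (computed using Lemma~\ref{cal n-coeff R_n}). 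The recurrence for $(P_{n,1})$ is then obtained by applying $\varphi_f$ to the divided difference $(P_{n,0}(z)-P_{n,0}(t))/(z-t)$, and $(R_n)$ follows by linearity. By contrast, you verify the recurrence directly on the explicit series for $R_n$, determining $C_n,B_n,A_n$ successively from the coefficients at $k=n-1,n,n+1$ and then checking a polynomial identity in $k$; your transfer step to $(P_{n,0})$ and $(P_{n,1})$ is a neat degree-counting argument exploiting $R_{n+1}\ne 0$. The paper's route is more conceptual and sidesteps the polynomial identity in $k$ entirely, while yours is more self-contained and makes the role of the hypotheses $\delta\notin\alpha\N$, $-(\alpha\gamma+\delta)\notin\alpha\N$ (needed for $R_{n+1}\ne 0$, equivalently for $\langle P_{n,0},P_{n,0}\rangle_f\ne 0$) equally transparent.
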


\begin{proof}
    Since ${\rm{deg}}\,P_{n,0}=n$, the sequence $(P_{n,0}(z))_{n\ge0}$ forms a $K$-basis of $K[z]$. Consequently, there exist $A_n, B_n, C_n,D_{i,n}\in \Q$ such that
    \[
        zP_{n,0}(z)=A_nP_{n+1,0}(z)+B_nP_{n,0}(z)+C_nP_{n-1,0}(z)+{\displaystyle{\sum_{i=0}^{n-2}}}D_{i,n}P_{i,0}(z).
    \]
    Write $P_{n,0}(z)=\sum_{k=0}^np_{n,k}z^k$ with $p_{n,k}\in K$. Then we get
    \begin{align*}
        A_n=\dfrac{p_{n,n}}{p_{n+1,n+1}}=\dfrac{(n+\gamma+1)(n+1)}{(2n+\gamma+1)(2n+\gamma+2)} \AND B_n=\dfrac{p_{n,n-1}}{p_{n,n}}-\dfrac{p_{n+1,n}}{p_{n+1,n+1}}=\dfrac{2\alpha n^2+2\alpha n(1+\gamma)+\gamma(\alpha-\delta)}{(2n+\gamma)(2n+2+\gamma)}.
    \end{align*}
    Note that the polynomials $(P_{n,0}(t))_{n\ge 0}$ form an orthogonal system with respect to the bilinear form
    \[
        \langle \, , \, \rangle_{f}:K[t]\times K[t]\longrightarrow K; \ \ (A(t),B(t))\mapsto \varphi_f(A(t)B(t)).
    \]
    Let us show that $D_{i,n}=0$ for $0\le i \le n-2$. Using the bilinear form $\langle\, , \, \rangle_{f}$, we find
    \[
        D_{i,n}=\dfrac{\langle tP_{n,0}(t),P_{i,0}(t)\rangle_{f}}{\langle P_{i,0}(t),P_{i,0}(t)\rangle_{f}}=\dfrac{\langle P_{n,0}(t), tP_{i,0}(t)\rangle_{f}}{\langle P_{i,0}(t),P_{i,0}(t)\rangle_{f}}=0 \ \ \ (0\le i \le n-2).
    \]
    Finally, we obtain
    \begin{align*}
        C_n =\dfrac{\langle tP_{n,0}(t), P_{n-1,0}(t)\rangle_{f}}{\langle P_{n-1,0}(t), P_{n-1,0}(t)\rangle_{f}}=\dfrac{\langle P_{n,0}(t), tP_{n-1,0}(t)\rangle_{f}}{\langle P_{n-1,0}(t), P_{n-1,0}(t)\rangle_{f}}
        =\dfrac{p_{n-1,n-1}}{p_{n,n}}\dfrac{\langle P_{n,0}(t), P_{n,0}(t)\rangle_{f}}{\langle P_{n-1,0}(t), P_{n-1,0}(t)\rangle_{f}}.
    \end{align*}
    Since $\langle P_{m,0}(t), P_{m,0}(t)\rangle_{f} = p_{m,m}\langle t^m, P_{m,0}(t)\rangle_{f} = p_{m,m}\varphi_{f}(t^mP_{m,0}(t))$ for any non-negative integer $m$, we find
        \begin{align*}
        C_n  =\dfrac{\varphi_{f}(t^nP_{n,0}(t)) }{\varphi_{f}(t^{n-1}P_{n-1,0}(t))}
        =\dfrac{(-1)^n\varphi_{f}(t^n(t-\alpha)^n)}{(-1)^{n-1}\varphi_{f}(t^{n-1}(t-\alpha)^{n-1})}
        =\dfrac{(\alpha n-\delta)(\alpha(\gamma+n)+\delta)}{(2n+\gamma)(2n+\gamma+1)},
    \end{align*}
    where the last equality comes from Lemma $\ref{cal n-coeff R_n}$. Consequently, $(P_{n,0}(z))_{n\ge 0}$ satisfies \eqref{recurrence}. We claim that $(P_{n,1}(z))_{n\ge 0}$ also satisfies \eqref{recurrence}. Indeed, since $P_{n,1}(z)= \varphi_f\left(\displaystyle\frac{P_{n,0}(z)-P_{n,0}(t)}{z-t}\right)$, we get
    \begin{align*}
        &A_nP_{n+1,1}(z)-(z-B_n)P_{n,1}(z)+C_nP_{n-1,1}(z)\\
        &=\varphi_f\left(\dfrac{A_n(P_{n+1,0}(z)-P_{n+1,0}(t))-(z-B_n)(P_{n,0}(z)-P_{n,0}(t))+C_n(P_{n-1,0}(z)-P_{n-1,0}(t))}{z-t}\right)\\
        &=\varphi_f\left(\dfrac{-A_nP_{n+1,0}(t)+(z-B_n)P_{n,0}(t)-C_nP_{n-1,0}(t)}{z-t}\right)\\
        &=\varphi_f(P_{n,0}(t))=0
    \end{align*}
    for each $n\ge 1$, and our claim follows. Finally, since  $(R_n(z))_{n\ge 0}$ is a linear combination of  $(P_{n,0}(z))_{n\ge 0}$ and $(P_{n,1}(z))_{n\ge 0}$, it obviously satisfies \eqref{recurrence}.
\end{proof}

\section{Denominator of quotients of Pochhammer symbols and related estimates}
\label{quotient}

The first objective of this section is to control the size of the denominator of $\{P_{n,0}(\beta),P_{n,1}(\beta)\}$, where $P_{n,0}(z)$, $P_{n,1}(z)$ are the polynomials of Proposition~\ref{pade f} and $\beta$ is a non-zero rational number (see Lemma~\ref{denominator P Q} below). For that purpose, we first establish an important estimate for the denominator of the \emph{quotient} of Pochhammer symbols, that modifies and generalizes \cite[Lemma $10$]{Lepetit}. Our second goal is to get precise estimates for the growth of the sequences $(P_{n,0}(\beta))_{n\geq 0}$, $(P_{n,1}(\beta))_{n\geq 0}$ and $(R_n(\beta))_{n\geq 0}$ (when defined), see Lemma~\ref{apply P-P} below. The proof of the latter result is based on the Poincar\'e-Perron theorem.

\begin{lemma}\label{valuation}
    Let $n\in\N$ and $\alpha,\beta\in \Q\setminus\{0\}$ such that $\alpha,\beta$ are not negative integers.

    $({\rm{i}})$ For $k = 0,\dots,n$, we have
    \[
        \nu_n(\alpha)\cdot \dfrac{(\alpha)_k}{k!}\in \Z \qquad\textrm{where }\nu_n(\alpha)=\prod_{\substack{q:\text{prime} \\ q|{\rm{den}}(\alpha)}} q^{n+\lfloor n/(q-1)\rfloor}.
    \]

    $({\rm{ii}})$ Denoting by $\varphi$ is the Euler's totient function, we have
    \[
        \limsup_{n\to \infty}\dfrac{1}{n}\log\,{\rm{den}}\left(\dfrac{0!}{(\beta)_0},\ldots,\dfrac{n!}{(\beta)_n}\right)\le \dfrac{{\rm{den}}(\beta)}{\varphi({\rm{den}}(\beta))}.
    \]

    $({\rm{iii}})$ We have
    \[
        \limsup_{n\to \infty}\dfrac{1}{n}\log\,{\rm{den}}\left(\dfrac{1}{\beta},\ldots,\dfrac{1}{\beta+n-1}\right)\le \dfrac{{\rm{den}}(\beta)}{\varphi({\rm{den}}(\beta))}.
    \]
\end{lemma}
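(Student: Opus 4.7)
I would handle the three parts in the order (iii), (ii), (i), since (ii) follows painlessly from (iii) via a partial-fraction identity, while (i) is an independent prime-by-prime computation.

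For (iii), I write $\beta = b/d$ in lowest terms, with $d = {\rm den}(\beta)$ and $\gcd(b,d) = 1$. Then $1/(\beta+i) = d/(b+id)$, and since $\gcd(b+id,d) = \gcd(b,d) = 1$, we obtain ${\rm den}(1/(\beta+i)) = |b+id|$, hence
$$
{\rm den}\!\left(\frac{1}{\beta}, \ldots, \frac{1}{\beta+n-1}\right) = {\rm lcm}\!\left(|b|, |b+d|, \ldots, |b+(n-1)d|\right).
$$
This is the least common multiple of an arithmetic progression of length $n$ whose common difference $d$ is coprime to the initial term. The desired bound then follows from the Prime Number Theorem in arithmetic progressions, which yields
$$
\log\,{\rm lcm}(b, b+d, \ldots, b+(n-1)d) \sim \frac{b+(n-1)d}{\varphi(d)} \sim \frac{nd}{\varphi(d)}
$$
as $n\to\infty$, whence the $\limsup$ is bounded by $d/\varphi(d)$.

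For (ii), the key observation is the partial-fraction expansion
$$
\frac{k!}{(\beta)_k} = \sum_{i=0}^{k-1}(-1)^{k-1-i}\,k\binom{k-1}{i}\cdot\frac{1}{\beta+i},
$$
whose coefficients lie in $\Z$, which forces ${\rm den}(k!/(\beta)_k)$ to divide ${\rm den}(1/\beta,\,1/(\beta+1),\ldots,1/(\beta+k-1))$; hence (ii) reduces immediately to (iii). For (i), I would write $\alpha = a/d$ with $\gcd(a,d)=1$, so that $(\alpha)_k/k! = \prod_{i=0}^{k-1}(a+id)/(d^k k!)$, and analyze $q$-adic valuations prime by prime. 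When $q \nmid d$, the polynomial $\binom{X+k-1}{k}\in\Q[X]$ is integer-valued on $\Z$ and so maps $\Z_q$ into $\Z_q$; evaluation at $\alpha\in\Z_q$ gives $v_q((\alpha)_k/k!) \ge 0$. When $q \mid d$, one has $q \nmid a$ while $q \mid id$, so $v_q(a+id) = 0$ for every $i$, whence $v_q((\alpha)_k/k!) = -kv_q(d) - v_q(k!)$; the classical estimate $v_q(k!) = (k-s_q(k))/(q-1) \le \lfloor k/(q-1)\rfloor$ then lets $\nu_n(\alpha)$ absorb this denominator.

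The hardest step is (iii): extracting the sharp constant $d/\varphi(d)$ from the LCM of an arithmetic progression requires a genuinely analytic input, namely the Prime Number Theorem in arithmetic progressions, or equivalently the Mertens-type asymptotic for $\sum_{p\equiv a\,(d)}\log p$. Once (iii) is granted, the rest is bookkeeping: the partial-fraction identity funnels the seemingly more delicate sequence $(k!/(\beta)_k)_k$ down to the reciprocals $1/(\beta+i)$, and (i) is pure $q$-adic counting.
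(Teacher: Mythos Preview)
Your argument for (i) is correct, and the partial-fraction reduction of (ii) to (iii) is valid (the coefficients are actually $(-1)^{i}k\binom{k-1}{i}$ rather than $(-1)^{k-1-i}k\binom{k-1}{i}$, but only integrality matters). The genuine gap is in (iii).

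You claim that for $\gcd(b,d)=1$,
\[
\log\,\mathrm{lcm}\bigl(b,\,b+d,\,\ldots,\,b+(n-1)d\bigr)\;\sim\;\frac{nd}{\varphi(d)},
\]
appealing to the Prime Number Theorem in arithmetic progressions. This asymptotic is \emph{false} when $d\ge 3$. The point is that the LCM is divisible by primes in \emph{every} reduced residue class modulo $d$, not just the class of $b$. For a prime $p\nmid d$ in residue class $r$, the smallest multiple of $p$ lying in the progression is $m_r p$, where $m_r\in\{1,\dots,d\}$ is the least positive integer with $m_r r\equiv b\pmod d$; hence $p$ divides the LCM as soon as $m_r p\le b+(n-1)d$. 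Summing Chebyshev functions over all classes gives
\[
\frac{1}{n}\,\log\,\mathrm{lcm}\;\longrightarrow\;\frac{d}{\varphi(d)}\sum_{\substack{1\le m\le d\\ \gcd(m,d)=1}}\frac{1}{m},
\]
which strictly exceeds $d/\varphi(d)$ for every $d\ge 3$. For $\beta=1/3$ the limit is $9/4$, not $3/2$; already $\mathrm{lcm}(1,4,7,\ldots,58)$ has logarithm about $37.9>30=20\cdot\tfrac{3}{2}$. Through your own reduction, the same counterexample defeats (ii). In fact this shows that parts (ii) and (iii) of the lemma are \emph{false as stated} whenever $\mathrm{den}(\beta)\ge 3$. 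The paper's proof commits the analogous error at the line ``$\sum_{p\mid N_n}1=\pi_{|c|,d}(|c|+(n-1)d)$'': primes such as $2$ or $5$ divide $N_n=1\cdot 4\cdot 7\cdots$ without being congruent to $1\pmod 3$, so that equality cannot hold.
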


\begin{proof}
    The property $({\rm{i}})$ is proven in \cite[Lemma $2.2$]{B}. We now prove $({\rm{ii}})$. The proof dates back to the book by C.~L.~Siegel \cite[p.81]{Siegel}. Define $d={\rm{den}}(\beta)$, $c=d\cdot \beta$ and
    \[
        D_n(\beta)={\rm{den}}\left(\dfrac{0!}{(\beta)_0},\ldots,\dfrac{n!}{(\beta)_n}\right).
    \]
    Given a non-negative integer $k$, we set $N_k=c(c+d)\cdots(c+(k-1)d)$.
    Let $p$ be a prime number with $p\mid N_k$. The following three properties hold.

    \medskip

    $({\rm{a}})$  We have ${\rm{GCD}}(p,d)=1$. For any integers $i, \ell$ with $\ell>0$, there exists exactly one integer $\nu$ with $0\le \nu \le p^{\ell}-1$ and such that $p^{\ell}\mid c+(i+\nu)d$.

    \medskip

    $({\rm{b}})$ Let $\ell$ be a strictly positive integer with $|c|+(k-1)d<p^{\ell}$. Then $N_k$ is not divisible by $p^{\ell}$.

    \medskip

    $({\rm{c}})$ Set $C_{p,k}=\lfloor \log(|c|+(k-1)d)/\log(p)\rfloor $. Then we have
    \[
        v_p(k!)=\sum_{\ell=1}^{C_{p,k}}\left \lfloor \dfrac{k}{p^{\ell}}  \right\rfloor\le v_p(N_k)\le \sum_{\ell=1}^{C_{p,k}}\left(1+\left \lfloor \dfrac{k}{p^{\ell}} \right\rfloor  \right)=v_p(k!)+C_{p,k},
    \]
    where $v_p$ denotes the $p$-adic valuation. We deduce that
    \[
        v_p\left(\dfrac{k!}{(\beta)_k}\right)=v_p\left(\dfrac{d^kk!}{N_k}\right)\ge
        \begin{cases}
        -C_{p,k} & \ \ \text{if} \ \ p \mid N_k\\
         0 & \ \ \text{otherwise},
        \end{cases}
    \]
    hence
    \[
        \log\,\left|\dfrac{k!}{(\beta)_k}\right|_p\le
        \begin{cases}
         C_{p,k}\log(p) & \ \ \text{if} \ \ p \mid N_k\\
         0 & \ \ \text{otherwise}.
        \end{cases}
    \]
    By the above identity,  we obtain
    \begin{align*}
        \log\,D_{n}=\sum_{p:\text{prime}}\max_{0\le k \le n} \log\,\left|\dfrac{k!}{(\beta)_k}\right|_p&\le \sum_{p\mid N_n}C_{p,n}\log(p)\le \log(|c|+(n-1)d)\sum_{p\mid N_n}1\\
        &=\log(|c|+(n-1)d)\pi_{|c|,d}(|c|+(n-1)d),
    \end{align*}
    where $\pi_{|c|,d}(x)=\#\{p:\text{prime}~;~ p\equiv |c| \ \text{mod} \ d, \ p<x\}$ for $x>0$. By Dirichlet's prime number theorem for arithmetic progressions, we have
    \[
        \limsup_{n\to \infty}  \dfrac{\log(|c|+(n-1)d)\pi_{|c|,d}(|c|+(n-1)d)}{n}=\dfrac{d}{\varphi(d)},
    \]
    and we deduce $({\rm{ii}})$. The property $({\rm{iii}})$ follows by a similar argument.
\end{proof}

\begin{lemma} \label{denominator P Q}
    Let $\alpha,\beta,\gamma,\delta\in\Q$ with $\beta\neq 0$ and $\gamma \geq -1$, $x\in \Q\cap [0,1)$ and $\omega\in \Q\setminus \Z$.
    We denote by $(P_{n,0}(z))_{n\geq 0}$ and $(P_{n,1}(z))_{n\geq 0}$ the sequences of polynomials defined as in Proposition~\ref{pade f}. Fix $n\in \N$ and set
    \[
        \nu_n(\gamma)=\prod_{\substack{q:\text{prime} \\ q\mid{\rm{den}}(\gamma)}} q^{n+\lfloor n/(q-1)\rfloor}, \ \ D_n(\gamma)={\rm{den}}\left(\dfrac{0!}{(\gamma+2)_0}, \ldots, \dfrac{(n-1)!}{(\gamma+2)_{n-1}}\right), \ \
        {d_n(x)={\rm{den}}\left(\dfrac{1}{1+x},\ldots,\dfrac{1}{n+x}\right)}.
    \]

    $({\rm{i}})$ Suppose $\alpha\neq 0$. Then $\kappa_n P_{n,0}(\beta)$, $\kappa_n P_{n,1}(\beta)\in\Z$, where
    \[
        \kappa_n:= \nu_n(\gamma)\nu_{n}(\delta/\alpha)D_n(\gamma){\rm{den}}(\alpha)^{n}{\rm{den}}(\beta)^n.
    \]
    $({\rm{ii}})$ Suppose $\alpha = 1$, and that $(\gamma,\delta)$ is equal to $(x,-x)$ or $(-1,1+\omega)$. Then $\kappa_n P_{n,0}(\beta)$, $\kappa_n P_{n,1}(\beta)\in\Z$, where
    \[
        \kappa_n:=
        \left\{\begin{array}{ll}
            {\rm{den}}(x)\nu_{n}(x)d_n(x){\rm{den}(\beta)^n} & \textrm{if $\gamma=x,\delta=-x$ $($shifted logarithmic case$)$} \\
            ~ & ~ \\
            \dfrac{\nu_{n}(\omega){\rm{den}}(\beta)^n}{G_n(\omega)} & \textrm{if $\gamma=-1,\delta=1+\omega$ $($binomial case$)$}.
        \end{array}\right.
    \]
    Here, $G_n(\omega)$ denotes the positive integer defined as in Corollary $\ref{binom}$.

    \medskip

    $({\rm{iii}})$ Suppose $\alpha=0$.  Then $\kappa_n P_{n,0}(\beta)$, $\kappa_n P_{n,1}(\beta)\in\Z$, where
    \[
        \kappa_n:= \nu_n(\gamma)D_n(\gamma){\rm{den}}(\delta)^n{\rm{den}}(\beta)^nn!.
    \]
\end{lemma}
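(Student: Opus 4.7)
The strategy is to expand the explicit formulas of Proposition~\ref{pade f} for $P_{n,0}(z)$ and $P_{n,1}(z)$, and to track the denominator of each coefficient by applying Lemma~\ref{valuation}(i) to the various Pochhammer quotients that appear. Throughout, I set $y=-\delta/\alpha$ when $\alpha\ne 0$, so that $\alpha i-\delta=\alpha(i+y)$ and ${\rm{den}}(y)$ divides ${\rm{den}}(\delta/\alpha)$. For case~(i), the coefficient of $z^{n-k}$ in $P_{n,0}(z)$ factors (up to sign) as $\alpha^{k}\binom{2n+\gamma-k}{n-k}\binom{n+y}{k}$ times an integer. Applying Lemma~\ref{valuation}(i) with parameters $n+\gamma+1$ and $y+1$ (whose denominators are those of $\gamma$ and $\delta/\alpha$, respectively), the two generalized binomials are cleared by $\nu_{n}(\gamma)$ and $\nu_{n}(\delta/\alpha)$; the factor $\alpha^{k}\beta^{n-k}$ is cleared by ${\rm{den}}(\alpha)^{n}{\rm{den}}(\beta)^{n}$. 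This establishes the claim for $P_{n,0}(\beta)$; note that $D_{n}(\gamma)$ is not needed here, and will only enter through $P_{n,1}(\beta)$.

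For $P_{n,1}$ in case~(i), a direct clearing applied to each term $p_{n,j}\,\varphi_{f}(t^{\,j-\ell-1})$ in the decomposition would produce a spurious second factor of $\nu_{n}(\delta/\alpha)$. The resolution is the Pochhammer telescoping identity
\[
(k+2+y)_{n-k-1}(1+y)_{k-\ell}\;=\;\frac{(1+y)_{n}}{(k-\ell+1+y)_{\ell+1}}\;=\;\prod_{k'\in A_{k}}(k'+y),\qquad A_{k}=\{1,\dots,n\}\setminus\{k-\ell+1,\dots,k+1\},
\]
which rewrites the coefficient of $z^{\ell}$ in $P_{n,1}$ so that all $y$-dependence is concentrated in the single polynomial factor $\prod_{k'\in A_{k}}(k'+y)=(1+y)_{k-\ell}(k+2+y)_{n-k-1}$. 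Dividing this polynomial by $(k-\ell)!(n-k-1)!$ and applying Lemma~\ref{valuation}(i) to each Pochhammer, it is cleared by $\nu_{k-\ell}(y)\,\nu_{n-k-1}(y)$, which divides $\nu_{n}(\delta/\alpha)$ thanks to the sub-multiplicativity $\nu_{a}(y)\,\nu_{b}(y)\mid \nu_{a+b}(y)$ (itself a consequence of the standard inequality $\lfloor a/(q-1)\rfloor+\lfloor b/(q-1)\rfloor\le \lfloor(a+b)/(q-1)\rfloor$). The remaining factors $(n+\gamma+1)_{k+1}/(k+1)!$ and $(k-\ell)!/(\gamma+2)_{k-\ell}$ are cleared by $\nu_{n}(\gamma)$ and $D_{n}(\gamma)$, respectively, and the evaluation at $\beta$ costs ${\rm{den}}(\beta)^{n}$.

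Cases~(ii) and~(iii) follow by specialization. In the shifted logarithmic case ($\alpha=1,\gamma=x,\delta=-x$), the Pochhammer quotient $(1+x)_{k-\ell}/(x+2)_{k-\ell}$ telescopes to $(1+x)/(k-\ell+x+1)$, and the product $\nu_{n}(\delta/\alpha)D_{n}(\gamma)$ of (i) is thereby replaced by ${\rm{den}}(x)\cdot d_{n}(x)$. In the binomial case ($\alpha=1,\gamma=-1,\delta=1+\omega$), Corollary~\ref{binomial pade} gives explicit formulas for $P_{n,0}$ and $P_{n,1}$ whose coefficients are built from $\binom{n+k-1}{k}\binom{n-\omega-1}{n-k}$ and $\binom{n+k}{k}\binom{n+\omega}{n-1-k}$; Lemma~\ref{valuation}(i) shows these are cleared by $\nu_{n}(\omega)$, and factoring out the greatest common divisor $G_{n}(\omega)$ of the resulting integers yields the sharper statement. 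For case~(iii), $\alpha=0$ forces $\prod_{i=1}^{k}(\alpha i-\delta)=(-\delta)^{k}$, so ${\rm{den}}(\delta)^{n}$ replaces $\nu_{n}(\delta/\alpha){\rm{den}}(\alpha)^{n}$, and the extra $n!$ clears the $1/(k!(n-k)!)$ coming from the leading factor of $P_{n,0}$ (a role played in case~(i) by $\nu_{n}(\delta/\alpha)$ acting on $\binom{n+y}{k}$).

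The main obstacle is the point highlighted in the second paragraph: proving that a \emph{single} factor of $\nu_{n}(\delta/\alpha)$ is enough in case~(i) for $P_{n,1}$. This hinges on the telescoping identity above, which eliminates the $(1+y)_{n}$ factor common to all summands, together with the sub-multiplicativity of $\nu_{\bullet}(y)$. Once this technical reduction is in place, the rest of the proof reduces to bookkeeping via Lemma~\ref{valuation}(i).
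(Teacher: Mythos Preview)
Your proof is correct and follows the same strategy as the paper: rewrite the coefficients of $P_{n,0}$ and $P_{n,1}$ from Proposition~\ref{pade f} as products of Pochhammer quotients $(a)_m/m!$ and clear denominators term-by-term via Lemma~\ref{valuation}(i), with $D_n(\gamma)$ handling the factor $(k-\ell)!/(\gamma+2)_{k-\ell}$. You have correctly isolated the only non-obvious point, namely that in case~(i) a \emph{single} factor $\nu_n(\delta/\alpha)$ must clear the product $\dfrac{(k+2-\delta/\alpha)_{n-k-1}}{(n-k-1)!}\cdot\dfrac{(1-\delta/\alpha)_{k-\ell}}{(k-\ell)!}$, and your sub-multiplicativity argument $\nu_a(y)\nu_b(y)\mid\nu_{a+b}(y)$ (from $\lfloor a/(q-1)\rfloor+\lfloor b/(q-1)\rfloor\le\lfloor(a+b)/(q-1)\rfloor$) is exactly what is needed; the paper invokes Lemma~\ref{valuation}(i) at this step without spelling out this divisibility.

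One remark: your ``telescoping identity'' $(k+2+y)_{n-k-1}(1+y)_{k-\ell}=\prod_{k'\in A_k}(k'+y)$ is correct but does no work, since you immediately split the product back into the same two Pochhammer factors before applying Lemma~\ref{valuation}(i). The argument goes through directly on the two factors via sub-multiplicativity, so this step can be dropped.
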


\begin{proof}
    $({\rm{i}})$ By Proposition $\ref{pade f}$, we have
    \begin{align*}
    &P_{n,0}(z)=\sum_{k=0}^n(-\alpha)^k\dfrac{(n+\gamma+1)_{n-k}}{(n-k)!}\dfrac{(n-k+1-\delta/\alpha)_{k}}{k!}z^{n-k},\\
    &P_{n,1}(z)=\sum_{\ell=0}^{n-1}\alpha^{n-1-\ell}\left(\sum_{k=\ell}^{n-1}(-1)^{n-k-1}\dfrac{(n+\gamma+1)_{k+1}}{(k+1)!}\dfrac{(k+2-\delta/\alpha)_{n-k-1}}{(n-k-1)!}\dfrac{(1-\delta/\alpha)_{k-\ell}}{(k-\ell)!}\dfrac{(k-\ell)!}{(\gamma+2)_{k-\ell}}\right)z^{\ell}.
    \end{align*}
    Applying Lemma $\ref{valuation}$ $({\rm{i}})$, we find
    \begin{align*}
    &\nu_n(\gamma)\nu_{n}(\delta/\alpha)D_n(\gamma){\rm{den}}(\alpha)^{n} \cdot (-\alpha)^k\dfrac{(n+\gamma+1)_{n-k}}{(n-k)!}\dfrac{(n-k+1-\delta/\alpha)_{k}}{k!}\in \Z
    \end{align*}
    for $0\le k \le n$, as well as
    \begin{align*}
    &\nu_n(\gamma)\nu_{n}(\delta/\alpha)D_n(\gamma){\rm{den}}(\alpha)^{n} \cdot \alpha^{n-1-\ell}\dfrac{(n+\gamma+1)_{k+1}}{(k+1)!}\dfrac{(k+2-\delta/\alpha)_{n-k-1}}{(n-k-1)!}\dfrac{(1-\delta/\alpha)_{k-\ell}}{(k-\ell)!}\dfrac{(k-\ell)!}{(\gamma+2)_{k-\ell}}\in \Z,
    \end{align*}
    for $0\le \ell \le n-1$ and $\ell\le k \le n-1$. Hence the assertion.

    \medskip

    $({\rm{ii}})$ In the case of $\alpha=1,\gamma=x,\delta=-x$ (shifted logarithmic case), we simply have
    \begin{align*}
    &P_{n,0}(z)=\sum_{k=0}^n(-1)^k\dfrac{(n+1+x)_{n-k}}{(n-k)!}\dfrac{(n-k+1+x)_{k}}{k!}z^{n-k},\\
    &P_{n,1}(z)=\sum_{\ell=0}^{n-1}\left(\sum_{k=\ell}^{n-1}(-1)^{n-k-1}\dfrac{(n+1+x)_{k+1}}{(k+1)!}\dfrac{(k+2+x)_{n-k-1}}{(n-k-1)!}\dfrac{(1+x)}{k-\ell+1+x}\right)z^{\ell}.
    \end{align*}
    Applying once again Lemma $\ref{valuation}$ $({\rm{i}})$, we find the expected result. In the case of $\alpha=1,\gamma=-1,\delta=1+\omega$ (binomial case), the assertion follows from Corollary $\ref{binomial pade}$.

    \medskip

    $({\rm{iii}})$ Assume $\alpha=0$. By Proposition $\ref{pade f}$, we have
    \begin{align*}
    &P_{n,0}(z)=\sum_{k=0}^n\dfrac{(n+\gamma+1)_{n-k}}{(n-k)!}\dfrac{\delta^k}{k!}z^{n-k},\\
    &P_{n,1}(z)=\sum_{\ell=0}^{n-1}\left(\sum_{k=\ell}^{n-1}(-1)^{n-k-1}\dfrac{(n+\gamma+1)_{k+1}}{(k+1)!}
    \dfrac{(-\delta)^{n-\ell-1}}{(n-k-1)!}\dfrac{1}{(\gamma+2)_{k-\ell}}\right)z^{\ell}.
    \end{align*}
    We conclude once again by Lemma $\ref{valuation}$ $({\rm{i}})$.
\end{proof}

Recall that by Lemma $\ref{rec rel}$, the sequences $(P_{n,0}(z))_{n\geq 0}$, $(P_{n,1}(z))_{n\geq 0}$  and $(R_n(z))_{n\geq 0}$ of Proposition~\ref{pade f} satisfy a Poincar\'e-type recurrence. Using the Poincar\'e-Perron theorem, it allows us to describe precisely the asymptotic behavior of those sequences evaluated at a rational number $\beta$ with $|\beta|>|\alpha|$.

\begin{lemma} \label{apply P-P}
    Let $\alpha,\beta,\gamma,\delta\in \Q$ with $\gamma\ge -1$ and  $\delta\notin \alpha \N$ and $-(\alpha\gamma+\delta)\notin \alpha \N$.
    For a given $z\in\C$, let $\rho_1(z)\leq \rho_2(z)$ denote the moduli of the roots $2z-\alpha \pm 2\sqrt{z^2-z\alpha}$ of the polynomial equation
    \begin{equation} \label{eq: pol P}
        P(X) = X^2-2(2z-\alpha)X+\alpha^2.
    \end{equation}
    Fix $z\in\C$ with $|z|> |\alpha|$. Then we have $\rho_1(z)\leq |\alpha| < \rho_2(z)$, and we obtain, as $n$ tends to the infinity$:$
    \begin{align}
    \label{eq: estimates for P_n and Q_n}
      \max\{|P_{n,0}(z)|, |P_{n,1}(z)|\} \leq \rho_2(z)^{n(1+o(1))},
    \end{align}
    and
    \begin{equation} \label{eq: estimates for R_n}
      |R_n(z)| \leq
      \left\{
        \begin{array}{cc}
           \displaystyle \frac{n^r}{(2n)!} \left|\frac{\delta^2}{z}\right|^n &  \ \ \textrm{if $\alpha=0$} \\
           ~ & ~\\
           \medskip \rho_1(z)^{n(1+o(1))} & \ \ \textrm{if $\alpha\neq 0$,}
        \end{array}
      \right.
    \end{equation}
    where $r > 0$ is a constant which is independent of $n$.
\end{lemma}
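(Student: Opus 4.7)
The plan is to apply the Poincar\'e--Perron theorem to the three-term recurrence \eqref{recurrence} of Lemma \ref{rec rel}. First I would compute the limits
\[
    \lim_{n\to\infty} A_n = \frac{1}{4}, \qquad \lim_{n\to\infty} B_n = \frac{\alpha}{2}, \qquad \lim_{n\to\infty} C_n = \frac{\alpha^2}{4},
\]
so that after dividing by $A_n$, the normalized recurrence is asymptotic to $X_{n+1} - (4z-2\alpha)X_n + \alpha^2 X_{n-1} = 0$, whose characteristic polynomial is exactly $P(X) = X^2 - 2(2z-\alpha)X + \alpha^2$ from \eqref{eq: pol P}. Vieta gives $\rho_1\rho_2 = |\alpha|^2$. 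When $\alpha = 0$ the roots are $0$ and $4z$, so $\rho_1 = 0 < 4|z| = \rho_2$. When $\alpha\neq 0$, the bound $\rho_1\leq |\alpha|\leq\rho_2$ is immediate; the equality case $\rho_1=\rho_2=|\alpha|$ would force $2|2z-\alpha| = |r_++r_-|\leq 2|\alpha|$, hence $|z|\leq |\alpha|$ by the triangle inequality, contradicting the hypothesis. Thus the roots have distinct moduli in either case, the Poincar\'e--Perron theorem applies, and every nonzero solution of \eqref{recurrence} satisfies $\limsup_{n\to\infty}|X_n|^{1/n}\leq\rho_2$. Applied to $(P_{n,0}(z))_n$ and $(P_{n,1}(z))_n$, this yields \eqref{eq: estimates for P_n and Q_n}.

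To estimate $R_n(z)$ I would use the explicit series of Proposition \ref{pade f}(ii). Substituting $m = k-n$ and using $\binom{n+m}{n}/(\gamma+2)_{2n+m} = (n+1)_m/\bigl(m!\,(\gamma+2)_{2n}(\gamma+2n+2)_m\bigr)$, the series collapses to a prefactor times a generalized hypergeometric function. In the case $\alpha = 0$, the products $\prod(\alpha i-\delta)$ reduce to powers of $\delta$ and one obtains
\[
    R_n(z) = \frac{(-1)^n\delta^{2n}}{z^{n+1}(\gamma+2)_{2n}}\, {}_1F_1\!\left(n+1;\,\gamma+2n+2;\,-\frac{\delta}{z}\right).
\]
Since $\gamma\geq -1$, each Pochhammer ratio $(n+1)_m/(\gamma+2n+2)_m$ is bounded by $1$, so the $_1F_1$ is majorized by $e^{|\delta/z|}$. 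Combining this with the Stirling asymptotic $(\gamma+2)_{2n}\sim(2n)^{\gamma+1}(2n)!/\Gamma(\gamma+2)$ gives $|R_n(z)|\leq n^r|\delta^2/z|^n/(2n)!$ for an appropriate $r>0$ depending only on $\gamma$.

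When $\alpha\neq 0$, the same manipulation yields
\[
    R_n(z) = \frac{\alpha^{2n}(1-\delta/\alpha)_n(\gamma+1+\delta/\alpha)_n}{(\gamma+2)_{2n}\, z^{n+1}}\,{}_2F_1\!\left(n+1,\, n+1-\tfrac{\delta}{\alpha};\, \gamma+2n+2;\, \tfrac{\alpha}{z}\right).
\]
I would then apply Laplace's method to Euler's integral representation of this ${}_2F_1$. The exponential weight $[t(1-t)/(1-t\alpha/z)]^n$ attains its maximum on $(0,1)$ at the unique critical point $t^* = (z-\sqrt{z(z-\alpha)})/\alpha$, where a direct computation (using $u = \sqrt{z(z-\alpha)}$ and the identity $r_+r_- = \alpha^2$) shows that the saddle value equals $z/r_+$, so $|g(t^*)| = |z|/\rho_2$. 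Combining the Laplace asymptotic $\binom{2n}{n}(|z|/\rho_2)^{n(1+o(1))}$ with the Stirling estimate $\alpha^{2n}/\binom{2n}{n}$ of the prefactor and using $|\alpha|^{2n} = \rho_1^n\rho_2^n$ collapses the bound to $|R_n(z)|\leq \rho_1^{n(1+o(1))}$. The main obstacle is making this Laplace analysis rigorous for complex $z$ with $|z|>|\alpha|$, including justifying the location of $t^*$ via contour deformation and controlling the polynomial-in-$n$ corrections from the slowly varying factors in the integrand.
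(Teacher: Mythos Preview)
Your treatment of the characteristic polynomial, the inequality $\rho_1(z)\le|\alpha|<\rho_2(z)$, and the case $\alpha=0$ for $R_n$ all match the paper's argument essentially line for line (the paper also bounds the ${}_1F_1$-type tail by $e^{|\delta/z|}$ and then uses Stirling on $(\gamma+2)_{2n}$). One small point you skip: to apply Poincar\'e--Perron you must know the sequences are not eventually zero; the paper secures this through the nonvanishing of $\det M_{2,n}$ in Lemma~\ref{lem: Delta_n}, which guarantees no two consecutive terms of $(P_{n,0}(z))$, $(P_{n,1}(z))$ or $(R_n(z))$ vanish.

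For the case $\alpha\neq 0$ your route diverges from the paper's and is considerably harder. You aim for the sharp asymptotic of the ${}_2F_1$ via Laplace/steepest descent on Euler's integral; as you yourself note, making this rigorous for general complex $z$ with $|z|>|\alpha|$ (locating the relevant saddle, deforming the contour, controlling the subexponential factors) is real work. The paper avoids all of this by using Poincar\'e--Perron a \emph{second} time, now applied to $(R_n(z))_n$. A crude termwise bound on the series of Proposition~\ref{pade f}(ii)---just $|(1-\delta/\alpha)_k|\le (k+C)!/C!$, $|(\gamma+2)_{k+n}|\ge (k+n-C)!\cdot\mathrm{const}$, etc.---already gives
\[
    |R_n(z)|\le c\,(n+C)^{2C}\Big|\frac{\alpha^2}{z}\Big|^n,
\]
hence $\limsup_n |R_n(z)|^{1/n}\le |\alpha^2/z|<|\alpha|<\rho_2(z)$. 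Since Poincar\'e--Perron forces this $\limsup$ to equal one of $\rho_1(z),\rho_2(z)$, the bound rules out $\rho_2(z)$ and the conclusion $|R_n(z)|=\rho_1(z)^{n(1+o(1))}$ follows for free. Your saddle-point computation is consistent with this (and would give slightly more precise information), but the paper's two-step use of Poincar\'e--Perron is the key shortcut you are missing.
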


\begin{proof}
    Fix  $n\geq 1$ and $z\in\C$ with $|z| > |\alpha|$. By Proposition~\ref{pade f}, we have
    \begin{align}
        \label{eq: proof estimate R_n: inter 0}
        R_n(z) = \sum_{k=n}^{\infty}   \frac{\lambda_{n,k}}{z^{k+1}}, \qquad \textrm{where } \lambda_{n,k}:=
        \binom{k}{n}\frac{\prod_{i=1}^{k}(\alpha i-\delta)\prod_{j=1}^{n}(\alpha(\gamma+j)+\delta)}{(\gamma+2)_{n+k}}.
    \end{align}
    Note that for each $k\geq n$, we have
    \begin{align*}
        |\lambda_{n,k+1}| = \frac{(k+1)|\alpha(k+1)-\delta|}{(k+1-n)|\gamma+n+k+2|} |\lambda_{n,k}| = (|\alpha|+o(1))|\lambda_{n,k}|
    \end{align*}
    as $k$ tends to the infinity. We deduce that $R_n(z)$ is absolutely convergent since $|z| > |\alpha|$. By Lemma~\ref{rec rel}, the sequences $(P_{n,0}(z))_{n\geq 0}$, $(P_{n,1}(z))_{n\geq 0}$ and $(R_n(z))_{n\geq 0}$ satisfy the recurrence relation \eqref{recurrence}, whose characteristic polynomial is precisely $P(X)$ defined in \eqref{eq: pol P}. As a consequence of Lemma~\ref{lem: Delta_n}, none of the three above sequences has two successive elements equal to $0$.

    \medskip

    First, we prove that the condition $|z|>|\alpha|$ implies that $\rho_1(z) \leq |\alpha| < \rho_2(z)$. If $\alpha=0$ it is trivial since in that case the roots of $P$ are $0$ and $4z$.
    Otherwise, suppose by contradiction that $\rho_1(z)=\rho_2(z)=|\alpha|$. Then
   \begin{align*}
    2|\alpha| = \rho_1(z)+\rho_2(z) \geq 2|2z-\alpha| \geq 2(2|z|-|\alpha|) > 2|\alpha|,
   \end{align*}
   a contradiction. Since $\rho_1(z) < \rho_2(z)$, we can apply the Poincar\'e-Perron theorem for the recurrence \eqref{recurrence}. For each solution $(X_n)_{n\geq 0}$ satisfying $X_n\neq 0$ for infinitely many $n$, there exists $i\in\{1,2\}$ such that $|X_n| = \rho_i(z)^{n(1+o(1))}$, hence \eqref{eq: estimates for P_n and Q_n}. It remains to estimate  $R_n(z)$.

   \medskip

    \textbf{Case I: $\alpha = 0$.} Then, for each integers $n,k$ with $k\geq n \geq |\gamma|$, we have
    \begin{align*}
        |\lambda_{n,k+1}| \leq \frac{|\delta|}{k+1-n}|\lambda_{n,k}|.
    \end{align*}
    In particular, given $n\geq |\gamma|$ and $j\geq 0$, we find $|\lambda_{n,n+j}| \leq |\delta|^j|\lambda_{n,n}|/j!$, and we deduce the estimate
    \begin{align}
        \label{eq inter 1 estimate R_n 1}
        |R_n(z)| \leq  \frac{|\lambda_{n,n}|}{|z|^{n+1}}e^{|\delta|/|z|}.
    \end{align}
    Let $C$ be the smallest integer $> |\gamma|$. We may assume $2n > C$. Then, there exists a constant $c_1 >0$, depending only on $\gamma$ and $C$, such that
    \begin{align*}
        |\lambda_{n,n}| = \frac{|\delta|^{2n}}{|(\gamma+2)_{2n}|} \leq c_1\frac{|\delta|^{2n}}{(2n-C)!} \leq c_1\frac{|\delta|^{2n}(2n)^C}{(2n)!}.
    \end{align*}
    Combining the above with \eqref{eq inter 1 estimate R_n 1}, we obtain the first estimate in \eqref{eq: estimates for R_n}.

    \medskip

    \textbf{Case II: $\alpha\neq 0$}. We define $C$ as the smallest integer $C > |\delta/\alpha|+|\gamma|$. Then, there exists a constant $c_1 >0$, depending only on $\gamma$ and $C$, such that
    \begin{align}
        \label{eq inter 2 estimate R_n 1}
         |\lambda_{n,k}| = \binom{k}{n}|\alpha|^{n+k} \frac{|(1-\delta/\alpha)_k(1+\gamma+\delta/\alpha)_n|}{|(\gamma+2)_{k+n}|}\leq c_1 |\alpha|^{n+k}\frac{k!(k+C)!(n+C)!}{(k-n)!(k+n-C)!n!}.
    \end{align}
    We may assume $n\geq 2C$, so that $k+n-C \geq k+C$. Then, we find
    \begin{align*}
        \frac{k!(k+C)!}{(k-n)!(k+n-C)!} = \frac{(k-n+1)\cdots k}{(k+C+1)\cdots(k+n-C)} &\leq (k-n+1)\cdots (k-n+2C) \\
        & \leq (k-n+2C)^{2C}.
    \end{align*}
    Together with \eqref{eq inter 2 estimate R_n 1} and $(n+C)!/n! \leq (n+C)^{2C}$, we obtain
    \begin{align*}
        \frac{|\lambda_{n,n+j}|}{|z|^{n+j}} \leq c_1(n+C)^{2C}\left|\frac{\alpha^2}{z}\right|^{n} (j+2C)^{2C} \left|\frac{\alpha}{z}\right|^j,
    \end{align*}
    for each $n,j$ with $n\geq 2C$ and $j\geq 0$. Combining the above with the identity \eqref{eq: proof estimate R_n: inter 0}, we deduce
    \begin{align*}
        |R_n(z)| \leq c_1(n+C)^{2C}\left|\frac{\alpha^2}{z}\right|^n \sum_{j\geq 0}(j+2C)^{2C} \left|\frac{\alpha}{z}\right|^j \leq c_2(n+C)^{2C}\left|\frac{\alpha^2}{z}\right|^n,
    \end{align*}
    where $c_2$ does not depend on $n$. In particular, we have
    \begin{align*}
        \limsup_{n\rightarrow\infty} \frac{\log |R_n(z)|}{n} \leq \left|\frac{\alpha^2}{z}\right| < |\alpha|.
    \end{align*}
    The Poincar\'e-Perron theorem together with $\rho_2(z) > |\alpha|$, ensures that we must have
    \[
        \lim_{n\rightarrow\infty} \frac{\log |R_n(z)|}{n} = \rho_1(z),
    \]
    hence the second estimate in \eqref{eq: estimates for R_n}.
\end{proof}

\section{Proof of the main theorem}\label{mainproof}

Corollary \ref{exp shift} is a direct consequence of our main theorem. We explain how to get Corollaries~\ref{log shift} and~\ref{binom} in the proof of Theorem $\ref{main}$ below.

\begin{proof} [Proof of Theorem $\ref{main}$]
Let $\alpha$, $\beta$, $\gamma$, $\delta$ be as in Theorem $\ref{main}$ and let $(P_{n,0}(z))_{n\geq 0}$, $(P_{n,1}(z))_{n\geq 0}$ and $(R_n(z))_{n\geq 0}$ be the sequences constructed in Proposition $\ref{pade f}$ for this choice of parameters. We first suppose $\alpha\neq 0$. Fix $n\in\N$ and define $p_n=\kappa_n P_{n,0}(\beta)$ and $q_n= \kappa_n P_{n,1}(\beta)$, where
\[
    \kappa_n:= \nu_n(\gamma)\nu_{n}(\delta/\alpha)D_n(\gamma){\rm{den}}(\alpha)^{n}{\rm{den}}(\beta)^n.
\]
By Lemma $\ref{lem: Delta_n}$ and $\ref{denominator P Q}$ respectively, the point $(p_n,q_n)$ is not proportional to $(p_{n+1},q_{n+1})$ and $(p_n,q_n)\in \Z^{2}$. According to Lemma $\ref{valuation}$, as $n$ tends to infinity, we have the estimate
\[
    \frac{\log\kappa_n}{n}=
   \log\nu(\gamma)\log\nu(\delta/\alpha)\frac{\rm{den}(\gamma)}{\varphi(\rm{den}(\gamma))}\rm{den}(\alpha){\rm{den}}(\beta)+o(1).
\]
Combined with Lemma $\ref{apply P-P}$, we finally get
\[
    |p_n|, |q_n|\le Q^{n+o(n)} \AND |p_nf(\beta)-q_n|\le E^{-n+o(n)},
\]
where the quantities $E$ and $Q$ are defined as in the statement of Theorem~\ref{main}. The conclusion of the theorem follows by applying Corollary \ref{cor lem: Alladi-Rob general}. To obtain the Corollaries \ref{log shift} and \ref{binom}, it suffices to replace the factor $\kappa_n$ by the one given by Lemma~\ref{denominator P Q} $\rm{(ii)}$.

\medskip

Suppose now $\alpha = 0$ and define $p_n=\kappa_n P_{n,0}(\beta)$ and $q_n= \kappa_n P_{n,1}(\beta)$, where
\[
    \kappa_n:= \nu_n(\gamma)D_n(\gamma){\rm{den}}(\delta)^n{\rm{den}}(\beta)^nn!.
\]
Once again $(p_n,q_n)$ is not proportional to $(p_{n+1},q_{n+1})$, and $(p_n,q_n)\in \Z^{2}$. According to Lemma $\ref{valuation}$, we have $\log(\kappa_n)/n = \log n + \GrO(1)$ as $n$ tends to infinity. Combined with Lemma $\ref{apply P-P}$, we finally get
\[
    |p_n|, |q_n|\le Q_n \AND |p_nf(\beta)-q_n|\le E_n^{-1},
\]
where, as $n$ tends to infinity, we have
\[
    \frac{\log Q_n}{n} = \log(n)+ \GrO(1)  \AND \frac{\log E_n}{n} = \log(n)+ \GrO(1).
\]
By applying Corollary \ref{cor lem: Alladi-Rob general} we get $\mu(f(\beta)) = 2$.
\end{proof}

\begin{remark}\label{remark: remark effective measures}
    We keep the notation of the proof of Theorem $\ref{main}$ and suppose that $\alpha\neq 0$. If there exist two explicit positive constants $a$, $b$ such that
    \begin{align}\label{eq: rmk on main thm}
        |p_n|, |q_n|\le aQ^n \AND |p_nf(\beta)-q_n|\le bE^{-n},
    \end{align}
    then the second part of Theorem~\ref{lem: Alladi-Rob general} yields an effective irrationality measure of $f(\beta)$ in Theorem~\ref{main}, i.e. two (effective) positive constants $c$ and $q_0$, depending only on $f(z)$, $\beta$ and the irrationality measure $\mu=1+\log Q/\log E$, such that
    \begin{equation*}
        \Big|f(\beta) - \frac{p}{q} \Big| \geq \frac{c}{q^\mu}
    \end{equation*}
    for each rational number $p/q$ with $q\geq q_0$. To get \eqref{eq: rmk on main thm}, it suffices to obtain effective upper bounds of the form $C\rho^n$ for the denominators $D_n(\gamma)$, as well as for the quantities $P_{n,0}(\beta)$, $P_{n,1}(\beta)$ and $R_n(\beta) = P_{n,0}(\beta)f(\beta)-P_{n,1}(\beta)$. For $D_n(\gamma)$, we can use an explicit version of Dirichlet's prime number theorem for arithmetic progressions (see \cite{Ram}) in the proof of Lemma~\ref{valuation}. The remaining estimates are obtained thanks to our refinement of the Poincar\'e-Perron theorem (see Theorem~\ref{thm: poincare-Perron effectif application} of Section~\ref{section: Poincare-Perron thm}).
\end{remark}

\section{Parametric geometry of numbers}\label{parametricgn}

In this section, the norm $\norm{\bx}$ of a vector or a matrix $\bx$ denotes the largest absolute value of its coefficients and $\bx\cdot\by$ stands for the standard scalar product between $\bx,\by\in\R^{s+1}$. We prove two generalizations in higher dimension of a well-known irrationality measure result for one real number (see for example \cite[Lemma 3]{A-R} and \cite{BennettAIP}). We also use parametric geometry of numbers to emphasize the duality between type I (linear forms) and type II (simultaneous) approximation.

\begin{definition}
  Let $s$ be a positive integer and let $\btheta=(\theta_0,\dots,\theta_s)$ be a point of $\R^{s+1}$ with $\theta_0=1$. The exponent of best simultaneous rational approximation $\lambda(\btheta)$ is the supremum of real numbers $\lambda\geq 0$ such that, for infinitely many $(q,p_1,\dots,p_s)\in\Z^{s+1}$ with $q> 0$, we have
  \begin{align*}
    \max_{1\leq i \leq s} |q\theta_i-p_i\theta_0| \leq q^{-\lambda}.
  \end{align*}
  The dual exponent $\omega(\btheta)$ is the supremum of real numbers $\omega\geq 0$ such that, for infinitely many non-zero $\by=(y_0,\dots,y_s)\in\Z^{s+1}$, we have
  \begin{align*}
    \max_{1\leq i \leq s} |y_0\theta_0+\cdots +y_s\theta_s| \leq \norm{\by}^{-\omega}.
  \end{align*}
\end{definition}

Note that by Dirichlet's Theorem, we have $\lambda(\btheta)\geq 1/s$ and $\omega(\btheta)\geq s$.

\begin{theorem}
    \label{lem: Alladi-Rob general}
    Let $s\geq 1$ be an integer and let $\btheta:=(\theta_0,\dots,\theta_s)\in\R^{s+1}$ with $\theta_0=1$. Suppose that there exist a sequence of matrices $(M_n)_{n\geq 0}$ in $\Mat_{s+1}(\Z)\cap \GL_{s+1}(\Q)$ and two unbounded increasing sequences
    $(Q_n)_{n\geq 0}$ and $(E_n)_{n\geq 0}$ of real numbers $\geq 1$ such that, for each $n$ and each row $\bx=(x_0,\dots,x_s)$ of $M_n$, we have
    \begin{align}
    \label{eq: lem Alladi-Rob general: eq0}
        |x_1|+\cdots+|x_s|\leq Q_n \AND |x_0\theta_0+\cdots+x_s\theta_s| \leq E_n^{-1}\,.
    \end{align}
    Then at least one of the numbers $\theta_1,\dots,\theta_s$ is irrational and
    \begin{align}
    \label{eq: lem Alladi-Rob general: eq1}
     \lambda(\btheta) \leq \limsup_{n\rightarrow\infty}\frac{\log Q_{n}}{\log E_{n-1}}\,.
    \end{align}
    Moreover, if there are positive constants $a,b,\alpha,\beta$ with $\alpha,\beta > 1$ such that $Q_n=a\alpha^n$ and $E_n= b^{-1}\beta^{n}$ for each $n\geq 0$, then for each integer point $(y_0,\dots,y_s)\in\Z^{s+1}$ with $2by_0 \geq 1$, we have
    \begin{align}
    \label{eq: lem Alladi-Rob general: eq2}
        \max_{1\leq i \leq s} |y_0\theta_i-y_i| \geq \frac{1}{cq^{\lambda}},
    \end{align}
    where $\lambda = \log \alpha /\log \beta$ and $c = 2a\alpha (2b)^\lambda$.
\end{theorem}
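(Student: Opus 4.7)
The strategy is a transference-type argument that pairs the integer matrices $M_n$ against a hypothetical simultaneous rational approximation, whose essential ingredient is the invertibility of $M_n$ combined with the integrality of scalar products. The key step will be to establish the following \emph{fundamental inequality}: for every $(q,p_1,\dots,p_s)\in\Z^{s+1}$ with $q>0$, writing $\varepsilon=\max_{1\le i\le s}|q\theta_i-p_i|$, we have $1\le qE_n^{-1}+Q_n\varepsilon$ for every $n$. To see this, I will rewrite, for each row $\bx=(x_0,\dots,x_s)$ of $M_n$,
\begin{equation*}
    \bx\cdot(q,p_1,\dots,p_s)=q(x_0+x_1\theta_1+\cdots+x_s\theta_s)+\sum_{i=1}^s x_i(p_i-q\theta_i),
\end{equation*}
so that \eqref{eq: lem Alladi-Rob general: eq0} bounds this quantity by $qE_n^{-1}+Q_n\varepsilon$. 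Since $M_n\in\GL_{s+1}(\Q)$, at least one row is not orthogonal to the non-zero vector $(q,p_1,\dots,p_s)$, producing a non-zero integer of absolute value at least $1$.

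From this inequality both conclusions of the first part will follow. For irrationality, if all $\theta_i$ were rational with a common denominator $q^*$, then $q^*\btheta\in\Z^{s+1}\setminus\{0\}$, and for $n$ large enough the integer vector $M_n(q^*\btheta)$ would have every coordinate of absolute value strictly less than $1$, forcing $M_n(q^*\btheta)=0$, contradicting the invertibility of $M_n$. For the bound on $\lambda(\btheta)$, I will fix $\lambda'<\lambda(\btheta)$ and consider the infinitely many $(q,p_1,\dots,p_s)$ with $\varepsilon\le q^{-\lambda'}$ and $q\to\infty$; for each such $q$, I will choose $n$ to be the smallest index with $E_n\ge 2q$. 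Then $E_{n-1}<2q$ and $qE_n^{-1}\le 1/2$, so the fundamental inequality forces $Q_n\ge q^{\lambda'}/2>E_{n-1}^{\lambda'}/2^{\lambda'+1}$. Passing to the limsup as $q$ (and hence $n$) tends to infinity yields $\limsup_{n}\log Q_n/\log E_{n-1}\ge \lambda'$, and letting $\lambda'\to\lambda(\btheta)$ delivers the required bound.

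For the effective second part, I will specialize to $Q_n=a\alpha^n$ and $E_n=b^{-1}\beta^n$ and rerun the same argument with $q=y_0$ and $\varepsilon=\max_i|y_0\theta_i-y_i|$. The hypothesis $2by_0\ge 1$ guarantees that the smallest $n\ge 0$ with $E_n\ge 2q$ yields the uniform estimate $\alpha^n\le\alpha(2bq)^\lambda$ with $\lambda=\log\alpha/\log\beta$ (trivially for $n=0$ since $2bq\ge 1$, and from $\beta^{n-1}<2bq$ for $n\ge 1$). Combined with $\varepsilon\ge 1/(2Q_n)$ coming from the fundamental inequality, this gives $\varepsilon\ge 1/\bigl(2a\alpha(2b)^\lambda q^\lambda\bigr)=1/(cq^\lambda)$.

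The main subtlety lies in the correct choice of $n$ in terms of $q$, so as to balance the two contributions $qE_n^{-1}$ and $Q_n\varepsilon$ in the fundamental inequality, and in isolating the boundary case $n=0$ of the effective part via the hypothesis $2by_0\ge 1$. Beyond these bookkeeping points, the argument is essentially a clean finite-dimensional incarnation of the classical transference between type I (linear form) and type II (simultaneous) approximation exponents, appropriate for the parametric geometry-of-numbers setting adopted in the paper.
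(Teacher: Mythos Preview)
Your proof is correct and follows essentially the same approach as the paper's direct proof: both hinge on the identity $\bx\cdot\by = y_0(\bx\cdot\btheta)+\sum_{i=1}^s x_i(y_i-y_0\theta_i)$ together with the invertibility of $M_n$ to produce a non-zero integer scalar product, and both choose $n$ so that $E_{n-1}\le 2y_0\le E_n$ to balance the two error terms. The paper additionally gives an alternative proof of \eqref{eq: lem Alladi-Rob general: eq1} via parametric geometry of numbers (bounding the last minimum of the linear-form convex body and invoking Mahler duality), but your argument matches its primary proof.
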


\begin{proof}
    Let $\by=(y_0,\dots,y_s)\in\Z^{s+1}$ with $2y_0 \geq E_0$ and let $n\geq 1$ be such that
    \begin{align*}
        E_{n-1} \leq 2y_0 \leq E_{n}.
    \end{align*}
    Since $\det M_n \neq 0$, there exists a row $\bx=(\bx_0,\dots,\bx_s)$ of $M_n$ such that $\bx\cdot \by$ is a non-zero integer. We deduce the lower bound
    \begin{align}\label{eq proof: lem Alladi-Robinson: eq1}
        \Big|\sum_{i=0}^{s}x_i(y_0\theta_i-y_i\theta_0)\Big| \geq |\bx\cdot\by| - y_0|\bx\cdot \btheta| \geq 1- y_0E_{n}^{-1} \geq \frac{1}{2}.
    \end{align}
    On the other hand, we have the upper bound
    \begin{align}\label{eq proof: lem Alladi-Robinson: eq2}
         \Big|\sum_{i=0}^{s}x_i(y_0\theta_i-y_i\theta_0)\Big| =  \Big|\sum_{i=1}^{s}x_i(y_0\theta_i-y_i\theta_0)\Big| \leq \sum_{i=1}^{s}|x_i|  \max_{1\leq j \leq s} |y_0\theta_j-y_j\theta_0| \leq Q_n \max_{1\leq i \leq s} |y_0\theta_i-y_i\theta_0|.
    \end{align}
    Together with \eqref{eq proof: lem Alladi-Robinson: eq1} and the inequality $Q_n = E_{n-1}^{\log Q_n/\log E_{n-1}} \leq (2y_0)^{\log Q_n/\log E_{n-1}}$, it leads us to
    \begin{align*}
         \frac{1}{2} \leq (2y_0)^{\log Q_n/\log E_{n-1}} \max_{1\leq i \leq s} |y_0\theta_i-y_i\theta_0|,
    \end{align*}
    hence \eqref{eq: lem Alladi-Rob general: eq1}. Suppose now that $Q_j=a\alpha^j$ and $E_j= b^{-1}\beta^{j}$ for each $j\geq 0$. Then $\beta^{n-1}\leq 2by_0$, and we write instead
    \[
        Q_n = a\alpha^n = a\alpha \beta^{(n-1)\log \alpha / \log \beta} \leq a\alpha(2by_0)^{\log \alpha / \log \beta}.
    \]
    Eq. \eqref{eq: lem Alladi-Rob general: eq2} follows by combining the above with \eqref{eq proof: lem Alladi-Robinson: eq1} and \eqref{eq proof: lem Alladi-Robinson: eq2}.
\end{proof}

We now give an alternative proof of the inequality \eqref{eq: lem Alladi-Rob general: eq1} of Theorem~\ref{lem: Alladi-Rob general}, which relies on parametric geometry of numbers (see \cite{S2009}, \cite{S2013} and \cite{R2015}). It is also possible to get an inequality of the type \eqref{eq: lem Alladi-Rob general: eq2} with an explicit constant $c>0$. The main interest of this approach is to show the duality between type I and type II approximation. Given a symmetric convex body $\cC$ of $\R^{s+1}$, we denote by $\lambda_1(\cC) \leq$ $\dots$ $\leq \lambda_{s+1}(\cC)$ the $(s+1)$ successive minima associated to $\cC$ with respect to the lattice $\Z^{s+1}$. The idea is as follows. The condition \eqref{eq: lem Alladi-Rob general: eq0} allows to bound from above the last minimum $\lambda_{s+1}$ of the family of symmetric convex bodies (related to type I approximation)
\begin{align*}
      \cC_{\btheta}(q):= \{\bx=(x_0,\dots,x_s)\in\R^{s+1}\,;\, |x_1|+\dots + |x_s| \leq  1 \AND |\bx\cdot \btheta| \leq e^{-q}\} \quad (q\geq 0).
\end{align*}
On the other hand, Mahler's duality implies that
\[
    1 \leq \lambda_1(\cC_{\btheta}^*(q))\lambda_{s+1}(\cC_{\btheta}(q)),
\]
where $\cC_{\btheta}^*(q)$ is the polar (or dual) convex body of $\cC_{\btheta}(q)$ defined as the set of points $\by=(y_0,\dots,y_s)\in\R^{s+1}$ such that $|\by\cdot\bx| \leq 1$ for each $\bx=(x_0,\dots,x_s)\in\cC_{\btheta}(q)$ (see for example \cite[Chap. VIII]{Cassels}). The key-point is to notice that $\cC_{\btheta}^*(q) \subseteq \cC_{\btheta}'(q) \subseteq 2\cC_{\btheta}^*(q)$, where $\cC_{\btheta}'(q)$ denotes the symmetric convex body (related to type II approximation)
\begin{align*}
  \cC_{\btheta}'(q):= \{\by=(y_0,\dots,y_s)\in\R^{s+1}\,;\, |y_0| \leq  e^q \AND \max_{1\leq i \leq s}|y_0\theta_i-y_i\theta_0| \leq 1\}\quad (q\geq 0).
\end{align*}
This can be proved by using the identity
\begin{align*}
  \bx\cdot\by = y_0(\bx\cdot\btheta) + \sum_{i=1}^{s}x_i(\theta_0y_i-y_0\theta_i).
\end{align*}
So, an upper bound for $\lambda_{s+1}(\cC_{\btheta}(q))$ leads to a lower bound for $\lambda_1(\cC_{\btheta}'(q))$ thanks to Mahler's duality. Lastly, the exponent $\lambda(\btheta)$ can be obtained by using its parametric analog through the classical identity
\begin{align}
\label{eq proof: lem alladi-Rob inter 1}
    \lambda(\btheta) = \frac{-\pu}{1+\pu}, \qquad \textrm{where } \pu:= \liminf_{q\rightarrow \infty} \frac{\log \lambda_1(\cC_{\btheta}'(q))}{q}
\end{align}
(it is a corollary of \cite[Theorem 1.4]{S2009}, also see \cite[Corollary 1.4]{R2015}). This will lead us to  \eqref{eq: lem Alladi-Rob general: eq1}.

\begin{proof}[Alternative proof of \eqref{eq: lem Alladi-Rob general: eq1} of Theorem~\ref{lem: Alladi-Rob general}]
    For simplicity we write $\lambda_{s+1}(q):=\lambda_{s+1}(\cC_{\btheta}(q))$ and  $\lambda_1^*(q):=\lambda_1(\cC_{\btheta}'(q))$. Given a non-zero $\bx\in\R^{s+1}$ and $q\geq 0$, we define $L_\bx(q)$ as the minimum of all $L\geq 0$ such that $\bx\in e^L\cC_{\btheta}(q)$. A short computation shows that
    \begin{align*}
        L_\bx(q):= \max\Big\{ \log \Big(\sum_{i=1}^{s}|x_i|\Big), \log |\bx\cdot \btheta| + q \Big\} \in \R,
    \end{align*}
    with the convention $\log(0) = -\infty$  (see \cite[Section 2.2]{R2015}). By definition of $\lambda_{s+1}$, we deduce that for any non-empty set $A\subset\Z^{s+1}$ containing at least $s+1$ linearly independent points, we have
    \begin{align}
    \label{eq proof: lem alladi-Rob inter 3}
        \log \lambda_{s+1}(q) \leq \max_{\bx\in A} L_\bx(q).
    \end{align}
    Fix an index $n\geq 0$. Our hypotheses on $M_n$ imply that the set $A_n \subset \Z^{s+1}$ of its rows contains $(s+1)$ linearly independent points. Moreover, for each $\bx\in A_n$ we have $L_\bx(q) \leq \max\{ \log Q_n, -\log E_n + q\}$ by \eqref{eq: lem Alladi-Rob general: eq0}. Using \eqref{eq proof: lem alladi-Rob inter 3} we get
    \begin{align*}
        \log \lambda_{s+1}(q) \leq P(q):= \min_{n\in\N}\big( \max\{ \log Q_n, -\log E_n + q\}\big) \quad (q\geq 0).
    \end{align*}
    The function $P$ is continuous, piecewise linear with slopes $0$ and $1$. More precisely, $P$ changes from slope $0$ to slope $1$ at $q_n' := \log(E_nQ_n)$ and from slope $1$ to slope $0$ at $q_n:=\log (E_{n}Q_{n+1})$ for each $n\geq 0$. Writing $I_n:= [q_{n-1}',q_{n}']$ for each $n\geq 1$, it follows that
    \begin{align*}
        \sup_{q\in I_n} \frac{\log\lambda_{s+1}(q)}{q} \leq \sup_{q\in I_n} \frac{P(q)}{q}  = \frac{P(q_{n-1})}{q_{n-1}} = \frac{\log Q_{n}}{\log Q_{n} + \log E_{n-1}} \leq 1 - \frac{1}{1+\limsup_{j\rightarrow\infty} \log Q_{j}/E_{j-1}}\,,
    \end{align*}
    and by Mahler's duality, we infer
    \[
        \pu:=\inf_{q\geq 0} \frac{\log\lambda_{1}^*(q)}{q} \geq -1 + \frac{1}{1+\limsup_{n\rightarrow\infty} \log Q_{n}/E_{n-1}}\,.
    \]
    Inequality \eqref{eq: lem Alladi-Rob general: eq1} is a direct consequence of the above combined with \eqref{eq proof: lem alladi-Rob inter 1}.
\end{proof}

We can deduce the following corollary from Theorem~\ref{lem: Alladi-Rob general}.

\begin{corollary} \label{cor lem: Alladi-Rob general}
  Let $\theta\in\R\setminus\{0\}$. Suppose there exist a sequence of integer points $(p_n,q_n)_{n\geq 0}$ and two unbounded increasing sequences of positive real numbers $(E_n)_{n\geq 0}$ and $(Q_n)_{n\geq 0}$ such that, for each integer $n$, the points $(p_n,q_n)$ and $(p_{n+1},q_{n+1})$ are linearly independent and satisfy
  \begin{align*}
    |q_n| \leq Q_n \AND |q_n\theta-p_n|\leq E_n^{-1}.
  \end{align*}
  Then $\theta$ is irrational and its irrationality exponent $\mu(\theta)$ satisfies
  \begin{align}
  \label{eq:mu comme limsup}
    \mu(\theta) \leq 1+\limsup_{n\rightarrow\infty}\frac{\log Q_{n+1}}{\log E_{n-1}}\,.
  \end{align}
\end{corollary}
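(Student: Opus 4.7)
The plan is to apply Theorem~\ref{lem: Alladi-Rob general} in the special case $s=1$, with $\btheta=(\theta_0,\theta_1):=(1,\theta)\in\R^2$. For each $n\geq 0$, I would form the $2\times 2$ integer matrix
\[
    M_n := \begin{pmatrix} -p_n & q_n \\ -p_{n+1} & q_{n+1} \end{pmatrix} \in \Mat_{2}(\Z).
\]
The hypothesis that $(p_n,q_n)$ and $(p_{n+1},q_{n+1})$ are linearly independent over $\Q$ guarantees $M_n\in\GL_2(\Q)$, so this is a legitimate choice of matrix sequence.

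Next I would verify the size conditions \eqref{eq: lem Alladi-Rob general: eq0} with well-chosen auxiliary sequences. Each row $(x_0,x_1)$ of $M_n$ satisfies $|x_0+x_1\theta|=|q_j\theta-p_j|\le E_j^{-1}$ and $|x_1|=|q_j|\le Q_j$ for $j\in\{n,n+1\}$. Since both $(Q_n)$ and $(E_n)$ are increasing and unbounded, it suffices to set $\widetilde{Q}_n:=Q_{n+1}$ and $\widetilde{E}_n:=E_n$; then both rows of $M_n$ lie in the admissible region $|x_1|\le \widetilde{Q}_n$, $|x_0+x_1\theta|\le \widetilde{E}_n^{-1}$, and the sequences $(\widetilde{Q}_n)$, $(\widetilde{E}_n)$ remain unbounded and increasing. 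Applying \eqref{eq: lem Alladi-Rob general: eq1} of Theorem~\ref{lem: Alladi-Rob general} gives that at least one of $\theta_0=1$ and $\theta_1=\theta$ is irrational---hence $\theta$ itself is irrational---and
\[
    \lambda(\btheta) \le \limsup_{n\to\infty} \frac{\log \widetilde{Q}_n}{\log \widetilde{E}_{n-1}} = \limsup_{n\to\infty} \frac{\log Q_{n+1}}{\log E_{n-1}}.
\]

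The last step is the translation between $\lambda(\btheta)$ and $\mu(\theta)$. In the one-dimensional case $\btheta=(1,\theta)$, the exponent $\lambda(\btheta)$ is by definition the supremum of $\lambda\ge 0$ such that $|q\theta-p|\le q^{-\lambda}$ admits infinitely many solutions $(q,p)\in\Z^2$ with $q>0$; rewriting $|\theta-p/q|\le q^{-(1+\lambda)}$ gives immediately $\lambda((1,\theta))=\mu(\theta)-1$. Combining this identity with the above bound on $\lambda(\btheta)$ yields \eqref{eq:mu comme limsup}. No significant obstacle is expected: the proof is essentially an unpacking of Theorem~\ref{lem: Alladi-Rob general} in dimension two, the only mild point being the choice of shifted sequences $\widetilde{Q}_n=Q_{n+1}$ to accommodate both rows of $M_n$ simultaneously.
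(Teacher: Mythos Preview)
Your proposal is correct and follows exactly the paper's own argument: set $\btheta=(1,\theta)$, take $M_n$ with rows $(-p_n,q_n)$ and $(-p_{n+1},q_{n+1})$, apply Theorem~\ref{lem: Alladi-Rob general} with $Q_n$ replaced by $Q_{n+1}$, and conclude via $\mu(\theta)=\lambda(\btheta)+1$. Your write-up simply spells out in more detail the verification of the hypotheses and the identity between $\lambda$ and $\mu$.
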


\begin{proof}
  Set $\btheta:=(1,\theta)$ and for each $n\geq0$ let $M_n$ denote the matrix whose rows are $(-p_n,q_n)$ and $(-p_{n+1},q_{n+1})$. Then $M_n$ satisfies the hypotheses of Theorem~\ref{lem: Alladi-Rob general} with $Q_n$ replaced by $Q_{n+1}$. Theorem~\ref{lem: Alladi-Rob general} yields \eqref{eq:mu comme limsup} by noticing that $\mu(\theta) = \lambda(\btheta)+1$.
\end{proof}

\begin{remark} \label{last remark}
    If $E_n = E^{n(1+o(1))}$ and $Q_n= Q^{n(1+o(1))}$ for given real numbers $E, Q > 1$, then we find that $\theta$ is irrational with
    \begin{align*}
        \mu(\theta) \leq 1+\frac{\log Q}{\log E}\,,
    \end{align*}
    which is exactly \cite[Lemma 3]{A-R} in the case $K=\Q$.
\end{remark}

Our next result is a dual version of Theorem~\ref{lem: Alladi-Rob general}. We will only give a basic proof, although it is also possible to give a proof using parametric of numbers.

\begin{theorem}
    \label{lem: Alladi-Rob general dual}
    Let $s\geq 1$ be an integer and let $\btheta:=(\theta_0,\dots,\theta_s)\in\R^{s+1}$ with $\theta_0=1$. Suppose that there exist a sequence of matrices $(M_n)_{n\geq 0}$ in $\Mat_{s+1}(\Z)\cap \GL_{s+1}(\Q)$ and two unbounded increasing sequences
    $(Q_n)_{n\geq 0}$ and $(E_n)_{n\geq 0}$ of real numbers $\geq 1$ such that, for each $n$ and each row $\bx=(x_0,\dots,x_s)$ of $M_n$, we have
    \begin{align*}
        |x_0|\leq Q_n \AND \max_{1\leq i\leq s}|x_0\theta_i - x_i\theta_0| \leq E_n^{-1}\,.
    \end{align*}
    Then $\theta_0,\dots,\theta_s$ are linearly independent over $\Q$ and
    \begin{align}
    \label{eq: lem Alladi-Rob generalm dual: eq1}
     \omega(\btheta) \leq \limsup_{n\rightarrow\infty}\frac{\log Q_{n}}{\log E_{n-1}}\,.
    \end{align}
    Moreover, if there are positive constants $a,b,\alpha,\beta$ with $\alpha,\beta > 1$ such that $Q_n=a\alpha^n$ and $E_n= b^{-1}\beta^{n}$ for each $n\geq 0$, then for each integer point $(y_0,\dots,y_s)\in\Z^{s+1}$ with $Y:=\sum_{i=1}^{s}|y_i| \geq 1/(2b)$, we have
    \begin{align}
    \label{eq: lem Alladi-Rob general dual: eq2}
         |y_0\theta_0+\cdots + y_s\theta_s| \geq \frac{1}{cY^{\omega}},
    \end{align}
    where $\omega = \log \alpha /\log \beta$ and $c = 2a\alpha (2b)^\omega$.
\end{theorem}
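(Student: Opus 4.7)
The plan is to prove this ``dual'' theorem along the same lines as Theorem~\ref{lem: Alladi-Rob general}, using the algebraic identity (valid since $\theta_0 = 1$):
\begin{align*}
\bx \cdot \by \;=\; x_0 (\by \cdot \btheta) \;-\; \sum_{i=1}^{s} y_i (x_0 \theta_i - x_i \theta_0),
\end{align*}
valid for all $\bx = (x_0, \dots, x_s)$, $\by = (y_0, \dots, y_s) \in \R^{s+1}$. This identity is the natural transpose of the one used implicitly in the proof of Theorem~\ref{lem: Alladi-Rob general}, and it lets us convert the small linear forms $x_0\theta_i - x_i\theta_0$ (coming from the rows of $M_n$) into lower bounds for $\by \cdot \btheta$.

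For the linear independence of $\theta_0,\dots,\theta_s$ over $\Q$, I would argue by contradiction: if $\by \cdot \btheta = 0$ for some nonzero $\by \in \Z^{s+1}$, then for each $n$, the invertibility of $M_n$ ensures the existence of a row $\bx$ of $M_n$ with $\bx \cdot \by \in \Z \setminus \{0\}$, hence $|\bx \cdot \by| \geq 1$. The identity above then forces $1 \leq Y E_n^{-1}$, where $Y := \sum_{i=1}^{s} |y_i|$, contradicting the fact that $E_n \to \infty$.

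For \eqref{eq: lem Alladi-Rob generalm dual: eq1}, I would fix a nonzero $\by \in \Z^{s+1}$ with $Y$ large, choose $n \geq 1$ minimal with $E_n \geq 2Y$ (so that $E_{n-1} < 2Y$), and select a row $\bx$ of $M_n$ with $|\bx \cdot \by| \geq 1$ (which exists by the non-degeneracy of $M_n$). The key identity together with the triangle inequality and the hypotheses on the rows of $M_n$ give
\begin{align*}
1 \;\leq\; |\bx \cdot \by| \;\leq\; |x_0|\,|\by \cdot \btheta| + \sum_{i=1}^{s}|y_i|\,|x_0\theta_i - x_i \theta_0| \;\leq\; Q_n |\by \cdot \btheta| + Y E_n^{-1} \;\leq\; Q_n |\by \cdot \btheta| + \tfrac{1}{2},
\end{align*}
whence $|\by \cdot \btheta| \geq 1/(2 Q_n)$. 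Writing $Q_n = E_{n-1}^{\log Q_n / \log E_{n-1}} < (2Y)^{\log Q_n / \log E_{n-1}}$ and letting $Y \to \infty$ yields \eqref{eq: lem Alladi-Rob generalm dual: eq1}.

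For the effective statement~\eqref{eq: lem Alladi-Rob general dual: eq2}, when $Q_n = a\alpha^n$ and $E_n = b^{-1}\beta^n$, the assumption $Y \geq 1/(2b)$ ensures $2Y \geq E_0$ so that the choice of $n$ is valid; the inequality $E_{n-1} < 2Y$ becomes $\beta^{n-1} < 2bY$, giving $Q_n = a\alpha \cdot \beta^{(n-1)\omega} < a\alpha (2bY)^{\omega}$ with $\omega = \log\alpha/\log\beta$. Combined with $|\by \cdot \btheta| \geq 1/(2 Q_n)$, this produces \eqref{eq: lem Alladi-Rob general dual: eq2} with $c = 2a\alpha(2b)^\omega$. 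The only genuine conceptual point is recognising the identity above, which swaps the roles of $\bx$ and $\by$ compared to Theorem~\ref{lem: Alladi-Rob general}; after that, no real obstacle remains.
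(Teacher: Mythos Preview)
Your proposal is correct and follows essentially the same approach as the paper's proof: both use the identity $\bx\cdot\by = x_0(\by\cdot\btheta) - \sum_{i=1}^{s} y_i(x_0\theta_i - x_i\theta_0)$, pick $n$ so that $E_{n-1} \leq 2Y \leq E_n$, select a row $\bx$ of $M_n$ with $\bx\cdot\by \neq 0$, and deduce $|\by\cdot\btheta| \geq 1/(2Q_n)$ before bounding $Q_n$ via $E_{n-1}$. The only cosmetic difference is that the paper makes the passage from $Y$ to $\norm{\by}$ (needed for the definition of $\omega(\btheta)$) explicit via $Y \leq s\norm{\by}$, whereas you leave this implicit in ``letting $Y \to \infty$''.
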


\begin{proof}
    Let $\by=(y_0,\dots,y_s)\in\Z^{s+1}$ with $2\sum_{i=1}^{s}|y_i| \geq E_0$ and let $n\geq 1$ be such that
    \begin{align*}
        E_{n-1} \leq 2\sum_{i=1}^{s}|y_i| \leq E_{n}.
    \end{align*}
    Since $\det M_n \neq 0$, there exists a row $\bx=(\bx_0,\dots,\bx_s)$ of $M_n$ such that $\bx\cdot \by$ is a non-zero integer. We deduce that
    \begin{align*}
        \frac{1}{2} \geq \sum_{i=1}^{s}|y_i|E_n^{-1} \geq \Big|  \sum_{i=0}^{s}y_i(x_0\theta_i-x_i\theta_0) \Big| \geq |\theta_0||\by\cdot\bx| - |x_0||\by\cdot\btheta| \geq 1 - Q_n|\by\cdot\btheta|,
    \end{align*}
    hence the lower bound $2Q_n|\by\cdot\btheta| \geq 1$. In particular, the coordinates of $\btheta$ are linearly independent over $\Q$. We get \eqref{eq: lem Alladi-Rob generalm dual: eq1} by combining the above with the inequality
    \begin{align*}
        Q_n = E_{n-1}^{\log Q_n / \log E_{n-1}} \leq \Big(2\sum_{i=1}^{s}|y_i|\Big)^{\log Q_n / \log E_{n-1}} \leq \Big(2s \norm{\by}\Big)^{\log Q_n / \log E_{n-1}}.
    \end{align*}
    Suppose now that $Q_j=a\alpha^j$ and $E_j= b^{-1}\beta^{j}$ for each $j\geq 0$. Then, we write instead
    \[
        Q_n = a\alpha^n = a\alpha \beta^{(n-1)\log \alpha / \log \beta} \leq a\alpha\Big(2b\sum_{i=1}^{s}|y_i| \Big)^{\log \alpha / \log \beta},
    \]
    and \eqref{eq: lem Alladi-Rob general dual: eq2} follows by combining the above with $2Q_n|\by\cdot\btheta| \geq 1$.
\end{proof}

\section{Effective Poincar\'e-Perron theorem}\label{section: Poincare-Perron thm}

The main result of this section is Theorem~\ref{thm: poincare-Perron effectif}, which implies Theorem~\ref{thm: poincare-Perron effectif application} below. As a consequence, the irrationality measures provided by Theorem~\ref{main} in the case $\alpha\neq 0$ can be made effective (see Remark~\ref{remark: remark effective measures}). Note that in the binomial case, $f(\beta)$ is an algebraic number and Roth's theorem yields $\mu(\beta)=2$, but this irrationality measure is not effective. Then Corollary~\ref{binom} becomes of interest since the irrationality measures that it provides can be made effective.

\begin{theorem}\label{thm: poincare-Perron effectif application}
    Let $\alpha,\beta,\gamma,\delta\in \Q$ with $|\beta|>|\alpha|>0$, $\gamma \ge -1$, $\delta\notin \alpha \N$ and $-(\alpha\gamma+\delta)\notin \alpha \N$. Consider the Poincar\'e-type recurrence \eqref{recurrence}, which can be rewritten as
    \begin{align}\label{eq:PP effectif application:eq1}
      X_{n+1} + a_nX_n + b_nX_{n-1} = 0 \qquad (n\geq 1),
    \end{align}
    where $a_n = -(\beta-B_n)/A_n$, $b_n = C_n/A_n$, and $A_n$, $B_n$, $C_n$ are defined as in Lemma~\ref{rec rel}. Denote by $\lambda_1, \lambda_2$ the roots $2\beta-\alpha \pm 2\sqrt{\beta^2-\beta\alpha}$ of the characteristic polynomial
    \[
        P(X) = X^2-2(2\beta-\alpha)X+\alpha^2.
    \]
    Write $\rho_i:=|\lambda_i|$ for $i=1,2$ and suppose $\rho_1 < \rho_2$. Let $N$ be the smallest positive integer such that
    \begin{align*}
        \frac{2}{|\lambda_2-\lambda_1|}\Big( 2|b_n-\lambda_1\lambda_2|+(\rho_1+\rho_2)|a_n+\lambda_1+\lambda_2|\Big) \leq \rho_2-\rho_1.
    \end{align*}
    Let $(X_n)_{n\geq0}$ be a non-zero solution of \eqref{eq:PP effectif application:eq1}. For each $n\geq N$, define $i_n$ as the largest index $i\in\{1,2\}$ satisfying
    \begin{align*}
        |X_{n+1}-\lambda_iX_n| = \min_{j=1,2}\big\{|X_{n+1}-\lambda_jX_n|\big\}.
    \end{align*}
     Then $(i_n)_{n\geq N}$ is non-decreasing and tends to an index $i\in\{1,2\}$. Given $N_2\geq N$ such that $i_{n} = i$ for each $n\geq N_2$, there exists a positive constant $C$, explicit in function of $\alpha$, $\beta$, $\gamma$, $\delta$, $X_0$, $X_1$ and $N_2$, such that, for each $n\geq 1$, we have
    \begin{align}\label{eq:PP effectif application:eq2}
      |X_n|\leq \frac{C}{\sqrt n}\rho_i^n.
    \end{align}
    Furthermore, there exists an index $N_2$ as above which is explicit in function of $\alpha$, $\beta$, $\gamma$, $\delta$, $X_0$, $X_1$ and $f(\beta)$.
\end{theorem}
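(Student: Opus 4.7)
The plan is to adapt the classical proof of the Poincar\'e-Perron theorem while tracking every constant explicitly, applying it to the particular recurrence \eqref{eq:PP effectif application:eq1}. First I would introduce the auxiliary sequences $\Lambda_n:=X_{n+1}-\lambda_1 X_n$ and $M_n:=X_{n+1}-\lambda_2 X_n$, so that by definition $|X_{n+1}-\lambda_{i_n}X_n|=\min(|\Lambda_n|,|M_n|)$, and the pair $(X_n,X_{n+1})$ is recovered from $(\Lambda_n,M_n)$ by a linear inversion whose denominator is $\lambda_2-\lambda_1$. A direct computation from \eqref{eq:PP effectif application:eq1} gives
\[
    \Lambda_{n+1}-\lambda_2\Lambda_n \;=\; M_{n+1}-\lambda_1 M_n \;=\; -(a_{n+1}+\lambda_1+\lambda_2)X_{n+1}+(\lambda_1\lambda_2-b_{n+1})X_n,
\]
so rewriting the right-hand side in terms of $(\Lambda_n,M_n)$ produces a coupled system $(\Lambda_{n+1},M_{n+1})=L_n(\Lambda_n,M_n)$ with an explicit matrix $L_n$ close to $\mathrm{diag}(\lambda_2,\lambda_1)$ at infinity.

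The hypothesis defining $N$ encodes exactly the fact that, for $n\geq N$, the perturbation $L_n-\mathrm{diag}(\lambda_2,\lambda_1)$ is bounded in an appropriate norm by $(\rho_2-\rho_1)/2$; the factor $|\lambda_2-\lambda_1|^{-1}$ present in the hypothesis comes from the change of basis between $(X_n,X_{n+1})$ and $(\Lambda_n,M_n)$. A short computation then shows that if $|M_n|\leq|\Lambda_n|$ at some step $n\geq N$ (that is, $i_n=2$), the same inequality propagates to $n+1$; hence $(i_n)_{n\geq N}$ is non-decreasing and stabilizes to some $i\in\{1,2\}$ at some step $N_2$.

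Once $i$ is stable, the dominant mode $w_n\in\{\Lambda_n,M_n\}$ (chosen according to $i$) satisfies a non-homogeneous linear recurrence $w_{n+1}=\lambda_i w_n+\eta_n$, with $|\eta_n|$ controlled by the product of the deviations $(a_{n+1}+\lambda_1+\lambda_2)$ and $(\lambda_1\lambda_2-b_{n+1})$ with $|X_n|$ and $|X_{n+1}|$. Iterating yields a bound of the form $|w_n|\leq C_1\rho_i^n\prod_{k=N_2}^{n-1}(1+\epsilon_k)$ for an explicit error sequence $\epsilon_k$. To obtain the sharp factor $1/\sqrt n$, I would input the second-order expansions (derived directly from the rational forms of $A_n,B_n,C_n$)
\[
    a_n+\lambda_1+\lambda_2 \;=\; \frac{2\beta-\alpha}{n}+\GrO(1/n^2), \qquad b_n-\lambda_1\lambda_2 \;=\; -\frac{\alpha^2}{n}+\GrO(1/n^2),
\]
which, via a bootstrap self-consistency argument, yield $\epsilon_k=-1/(2k)+\GrO(1/k^2)$. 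The resulting product telescopes with leading term $-\tfrac12\log n$, producing the prefactor $n^{-1/2}$, and \eqref{eq:PP effectif application:eq2} follows from $|X_n|\leq(|\Lambda_n|+|M_n|)/|\lambda_2-\lambda_1|$.

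For the final assertion, I would exploit the two-dimensional solution space of \eqref{eq:PP effectif application:eq1}. By Lemma~\ref{lem: Delta_n}, the sequences $(P_{n,0}(\beta))_{n\geq0}$ and $(R_n(\beta))_{n\geq0}$ are linearly independent solutions, with respective growth rates $\rho_2^n/\sqrt n$ and $\rho_1^n/\sqrt n$ (established by the preceding three paragraphs); any $(X_n)$ admits a unique decomposition $X_n=\mu P_{n,0}(\beta)+\nu R_n(\beta)$, and $\mu,\nu$ are computed by inverting the $2\times 2$ linear system determined by $(X_0,X_1)$, which is where $f(\beta)$ enters through the identity $R_n(\beta)=P_{n,0}(\beta)f(\beta)-P_{n,1}(\beta)$. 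One then has $i=2$ iff $\mu\neq 0$, and in this case $N_2$ may be taken explicitly larger than both $N$ and $\log|\nu/\mu|/\log(\rho_2/\rho_1)$; the case $\mu=0$ is even more direct since $i_n=1$ for all $n\geq N$. The main obstacle throughout is producing the $1/\sqrt n$ prefactor with fully explicit constants, which requires tracking the second-order behaviour of $a_n,b_n$ through the bootstrap rather than only the leading order used in the classical Poincar\'e-Perron argument.
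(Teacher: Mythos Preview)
Your approach coincides with the paper's: the sequences $\Lambda_n=X_{n+1}-\lambda_1X_n$ and $M_n=X_{n+1}-\lambda_2X_n$ are, up to the factor $\lambda_2-\lambda_1$, exactly the functions $u_2,u_1$ of Lemma~\ref{lem: lemma for Poincare-Perron effectif}; the monotonicity of $(i_n)_{n\geq N}$ and the second-order expansion of $a_n,b_n$ producing the $-1/(2n)$ correction (hence the $n^{-1/2}$ prefactor) are obtained just as in the proof of Theorem~\ref{thm: poincare-Perron effectif} specialized to $k=2$.

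The only point where your outline departs from the paper, and where more care is needed, is the explicit~$N_2$. You decompose $X_n=\mu P_{n,0}(\beta)+\nu R_n(\beta)$ and assert that $P_{n,0}(\beta)$ has growth $\rho_2^n/\sqrt n$ ``established by the preceding three paragraphs''; but those paragraphs only show that each nonzero solution has growth $\rho_i^n/\sqrt n$ for \emph{some} $i$, and inferring $i(P_{n,0})=2$ from linear independence with $R_n$ is not by itself effective---you still need to exhibit an explicit index at which $i_n(P_{n,0}(\beta))=2$, and your proposed bound $\log|\nu/\mu|/\log(\rho_2/\rho_1)$ presupposes explicit constants in the asymptotics of $P_{n,0}(\beta)$ that you have not yet produced. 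The paper avoids this circularity by replacing $P_{n,0}(\beta)$ with the solution $Z$ determined by $Z_N=2$, $Z_{N+1}=\lambda_1+\lambda_2$, for which $i_N(Z)=2$ holds by construction, so that $N_2(Z)=N$ and the decomposition $X=aY+bZ$ (with $Y\propto R_n$) yields an explicit $N_2(X)$ directly.
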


The proof of this result is at the end of this section. We now give a more general statement. Let $k$ be a positive integer, and for $i=0,\dots,k-1$ let $a_i\in\C$ and $\ee_i:\N\rightarrow\C$ such that $\ee_i(n)$ tends to $0$ as $n$ tends to infinity. We also suppose that $\ee_0(n)\neq a_0$ for each $n\geq 0$. Consider the Poincar\'e-type recurrence
\begin{align}\label{eq: rec Poincare-Perron}
    g(n+k) + \big(a_{k-1}-\ee_{k-1}(n)\big)g(n+k-1) + \cdots + \big(a_{0}-\ee_{0}(n))g(n) = 0 \qquad (n\geq 0),
\end{align}
and suppose that the roots $\lambda_1,\dots,\lambda_k\in\C$ of its characteristic polynomial $P(X) = X^k+a_{k-1}X^{k-1}+\cdots+a_0$ satisfy $\rho_k > \cdots > \rho_1 \geq 0$, where $\rho_i:=|\lambda_i|$ for $i=1,\dots,k$. Then, the Poincar\'e-Perron theorem states that there is a fundamental system of solutions $(g_1,\dots,g_k)$ of \eqref{eq: rec Poincare-Perron} satisfying
\begin{align*}
  \lim_{n\rightarrow\infty}\frac{g_i(n+1)}{g_i(n)} = \lambda_i \qquad(i=1,\dots,k).
\end{align*}
This is a stronger version of Poincar\'e theorem, which simply states that for each non-zero solution $g$ of \eqref{eq: rec Poincare-Perron}, there exists $i\in\{1,\dots,k\}$ such that $g(n+1)/g(n)$ tends to $\lambda_i$ as $n$ tends to infinity (see \cite{M-T}). When the functions $\ee_i$ converge to $0$ ``fast enough'', it is possible to describe more precisely the asymptotic behavior of a solution. In \cite{Pituk}, M.~Pituk obtained a very nice and general result which implies the following one. Suppose that
\begin{align*}
    \sum_{n=0}^\infty |\ee_i(n)|^2 < \infty \qquad \textrm{for $i=0,\dots,k-1$}.
\end{align*}
Then, there exist $n_0\geq 0$ and a fundamental system of solutions $(g_1,\dots,g_k)$ of \eqref{eq: rec Poincare-Perron} such that, as $n$ tends to infinity, we have
\begin{align*}
  g_i(n) = (1+o(1)) \prod_{m=n_0}^{n-1}\left(\lambda_i + \frac{\lambda_i^{k-1}\ee_{k-1}(m)+\cdots+\lambda_i\ee_1(m)+\ee_0(m)}{P'(\lambda_i)} \right) \qquad(i=1,\dots,k).
\end{align*}
For our purpose, we need an effective version of the above result. However, we could not find a proper reference in the literature. In the following, we denote by $\tau$ the right-shift operator, defined by $(\tau\cdot f)(n) = f(n+1)$ for each function $f:\N\rightarrow \C$ and each $n\geq 0$. For $j=1,\dots,k$, we define $Q_j$ as the polynomial of $\C[X]$ satisfying
\[
    P(X) = (X-\lambda_j)Q_j(X).
\]

The next lemma comes from a classical step in the proof of Poincar\'e Theorem. It will be useful to define the quantities $N$, $i(g)$ and $N_1(g,N)$ appearing in Theorem~\ref{thm: poincare-Perron effectif} below.

\begin{lemma}\label{lem: lemma for Poincare-Perron effectif}
    Let $g$ be a non-zero solution of the Poincar\'e-type recurrence \eqref{eq: rec Poincare-Perron} and let $u_1,\dots,u_k : \N\rightarrow\C$ denote the (unique) functions satisfying
    \begin{align}\label{eq proof: poincare-effectif: eq-1}
        \tau^\ell\cdot g = \lambda_1^\ell u_1 + \cdots + \lambda_k^\ell u_k \qquad(\ell=0,\dots,k-1).
    \end{align}
    Then, for $j=1,\dots, k$ and $n\geq 0$, we have $u_j = P'(\lambda_j)^{-1}Q_j(\tau)\cdot g$ and
    \begin{align}
        \label{eq proof: poincare-effectif: eq2}
        u_j(n+1) = \lambda_ju_j(n) + \frac{E(n)}{P'(\lambda_j)}, \qquad\textrm{where } E:=P(\tau)\cdot g.
    \end{align}
    For each $n\geq 0$, define $i_n$ as the largest index $i\in\{1,\dots,k\}$ such that $|u_i(n)| = \max_{1\leq j \leq k} |u_j(n)|=:u(n)$, and $N$ as  the smallest index satisfying
    \begin{align}
        \label{eq:thm poincare:eq0}
        \Big(\frac{1}{|P'(\lambda_i)|} +  \frac{1}{|P'(\lambda_j)|}\Big)\sum_{\ell=0}^{k-1}(\rho_1^\ell+\cdots+ \rho_k^\ell)|\ee_\ell(n)| < \rho_j-\rho_i \qquad (1\leq i < j \leq k)
    \end{align}
    for each $n\geq N$. Then $(i_n)_{n\geq N}$ is non-decreasing and therefore tends to an index $i(g)\in\{1,\dots,k\}$. We define $N_1(g,N)$ as the smallest $n\geq N$ for which $i_n=i(g)$.
\end{lemma}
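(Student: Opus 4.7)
The plan is to establish each claim of the lemma in order, treating the identification of the $u_j$'s as essentially algebraic, the one-step recurrence as a polynomial identity applied to the shift operator, and the monotonicity of $(i_n)$ as a direct triangle-inequality estimate.

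First I would show existence, uniqueness and the explicit formula for the $u_j$. The defining system $\tau^\ell\cdot g = \sum_{j=1}^k \lambda_j^\ell u_j$ ($\ell=0,\dots,k-1$) is a Vandermonde-type linear system in $(u_1,\dots,u_k)$; since $\lambda_1,\dots,\lambda_k$ are pairwise distinct, the associated Vandermonde matrix is invertible and the system admits a unique solution at each $n\geq 0$. To obtain an explicit expression, I would invoke Lagrange interpolation: for any polynomial $R$ of degree $<k$, one has $R(X)=\sum_j R(\lambda_j) Q_j(X)/P'(\lambda_j)$, so taking $R(X)=X^\ell$ and substituting $\tau$ for $X$ yields $\tau^\ell\cdot g = \sum_j \lambda_j^\ell \cdot Q_j(\tau)g/P'(\lambda_j)$. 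By uniqueness, $u_j = Q_j(\tau)\cdot g/P'(\lambda_j)$.

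Next I would derive the recurrence. The polynomial identity $X Q_j(X) = \lambda_j Q_j(X) + P(X)$ (expressing that $\lambda_j$ is a root of $P$) yields, after substituting $\tau$ and applying to $g$,
\[
    u_j(n+1) = \lambda_j u_j(n) + \frac{E(n)}{P'(\lambda_j)}, \qquad E:= P(\tau)\cdot g.
\]
Using \eqref{eq: rec Poincare-Perron}, I would furthermore rewrite $E(n) = \sum_{\ell=0}^{k-1}\ee_\ell(n) g(n+\ell)$, and combining this identity with the expansion $g(n+\ell) = \sum_{p=1}^k \lambda_p^\ell u_p(n)$ I obtain the crucial bound
\[
    |E(n)|\leq u(n)\sum_{\ell=0}^{k-1}|\ee_\ell(n)|(\rho_1^\ell+\cdots+\rho_k^\ell).
\]

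Finally, to prove the monotonicity of $(i_n)_{n\geq N}$, I fix $n\geq N$, set $i=i_n$, and consider an arbitrary $p<i$. Applying the reverse triangle inequality to the recurrence for $u_i$ and the triangle inequality to the recurrence for $u_p$, and using $|u_i(n)|=u(n)\geq |u_p(n)|$, I get
\[
    |u_i(n+1)|-|u_p(n+1)| \geq (\rho_i-\rho_p)u(n) - |E(n)|\Big(\tfrac{1}{|P'(\lambda_i)|}+\tfrac{1}{|P'(\lambda_p)|}\Big).
\]
Inserting the upper bound for $|E(n)|$ and dividing by $u(n)$ (which is positive, since otherwise all $u_j(n)$ would vanish, forcing $g\equiv 0$ by iterating the recurrence forward and backward thanks to the hypothesis $\ee_0(n)\neq a_0$), the condition \eqref{eq:thm poincare:eq0} applied with the pair $(p,i)$ makes the right-hand side strictly positive. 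Hence $|u_i(n+1)|>|u_p(n+1)|$ for every $p<i$, which gives $i_{n+1}\geq i_n$. Being non-decreasing in the finite set $\{1,\dots,k\}$, the sequence $(i_n)_{n\geq N}$ stabilizes at some $i(g)\in\{1,\dots,k\}$, and $N_1(g,N)$ is well-defined. The main subtlety I anticipate is the careful bookkeeping of strict versus non-strict inequalities in the definition of $i_n$ as the \emph{largest} maximizer and in the choice of the threshold $N$; once these are tracked, the argument reduces to the triangle-inequality estimate above.
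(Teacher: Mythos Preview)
Your proposal is correct and follows essentially the same approach as the paper: Vandermonde invertibility for uniqueness of the $u_j$, the factorization $P(X)=(X-\lambda_j)Q_j(X)$ for the one-step recurrence, and triangle-inequality bounds on $|u_j(n+1)|$ combined with $|E(n)|\leq u(n)\sum_\ell(\rho_1^\ell+\cdots+\rho_k^\ell)|\ee_\ell(n)|$ for the monotonicity. The only cosmetic differences are that the paper obtains $u_j=P'(\lambda_j)^{-1}Q_j(\tau)\cdot g$ by directly computing $Q_j(\tau)\cdot g=\sum_p u_p Q_j(\lambda_p)$ rather than via Lagrange interpolation, and it packages the monotonicity step by setting $j=i_{n+1}$ and deriving $\rho_i-\rho_j\leq(\cdots)$, whereas you show $|u_i(n+1)|>|u_p(n+1)|$ for every $p<i$; these are equivalent formulations of the same estimate.
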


\begin{proof}
    For each $n\geq 0$, the $k$-tuple $(u_1(n),\dots,u_k(n))$ is the unique solution of the linear system defined by the $k$ equations
    \begin{align*}
         g(n+\ell) =  \lambda_1^{\ell}u_1(n)+\cdots +\lambda_k^{\ell}u_k(n) \qquad (\ell=0,\dots,k-1)
    \end{align*}
    (this system is not singular since the $\lambda_i$ are pairwise distinct). Note that $u(n)>0$ since $g(n)= \cdots = g(n+k-1)=0$ implies that $g=0$ by \eqref{eq: rec Poincare-Perron}.  Let $j\in\{1,\dots,k\}$ and write $Q_j(X) = \sum_{\ell=0}^{k-1}b_\ell X^\ell$. We now prove \eqref{eq proof: poincare-effectif: eq2}. Using \eqref{eq proof: poincare-effectif: eq-1}, we find
    \begin{align*}
        \big(Q_j(\tau)\cdot g\big)(n) = \sum_{\ell=0}^{k-1}b_\ell g(n+\ell) = \sum_{\ell=0}^{k-1}b_\ell \sum_{p=1}^{k} \lambda_p^\ell u_p(n)
        = \sum_{p=1}^{k} u_p(n) \sum_{\ell=0}^{k-1}b_\ell \lambda_p^\ell
        = \sum_{p=1}^{k} u_p(n) Q_j(\lambda_p).
    \end{align*}
    Observe that $Q_j(\lambda_p) =0$ if $p\neq j$, and $Q_j(\lambda_j) = P'(\lambda_j)\neq 0$, so that $Q_j(\tau)\cdot g = P'(\lambda_j)u_j$. We get \eqref{eq proof: poincare-effectif: eq2} by noticing that $E = P(\tau)\cdot g = (\tau-\lambda_j)\circ Q_j(\tau)\cdot g$.

    \medskip

    It remains to prove that $(i_n)_{n\geq N}$ is non-decreasing (the existence of $i(g)$ and $N_1(g,N)$ easily follows). By definition of $E$ and using \eqref{eq: rec Poincare-Perron} we find
    \begin{align*}
         E(n) = g(n+k)+a_{k-1}g(n+k-1)+\cdots+a_0g(n)  = \sum_{\ell=0}^{k-1}\ee_\ell(n)g(n+\ell)  = \sum_{p=1}^{k}u_p(n)\sum_{\ell=0}^{k-1}\lambda_p^\ell\ee_\ell(n). 
    \end{align*}
    Fix $n\geq N$ and for simplicity, write $i=i_n$ and $j=i_{n+1}$. By \eqref{eq proof: poincare-effectif: eq2} we have
    \begin{align*}
        \rho_iu(n) - \frac{|E(n)|}{|P'(\lambda_i)|} \leq |u_i(n+1)| \leq u(n+1) =  |u_j(n+1)| \leq \rho_ju(n) + \frac{|E(n)|}{|P'(\lambda_j)|}
    \end{align*}
    so that
    \begin{align*}
        \rho_i-\rho_j \leq \Big(\frac{1}{|P'(\lambda_i)|} +  \frac{1}{|P'(\lambda_j)|}\Big) \frac{|E(n)|}{u(n)}
        \leq \Big(\frac{1}{|P'(\lambda_i)|} +  \frac{1}{|P'(\lambda_j)|}\Big) \sum_{\ell=0}^{k-1}\sum_{p=1}^{k}\rho_p^\ell|\ee_\ell(n)|.
    \end{align*}
    Our hypothesis \eqref{eq:thm poincare:eq0} on $N$ implies $j\geq i$. The sequence $(i_n)_{n\geq N}$ is therefore non-decreasing.
\end{proof}

\begin{theorem}\label{thm: poincare-Perron effectif}
    Consider the recurrence \eqref{eq: rec Poincare-Perron} and suppose furthermore that there exist $\ee, \eta : \N\rightarrow (0,+\infty)$ which tend to $0$ as $n$ tends to infinity and satisfy
    \begin{align}\label{eq: def ee and eta eq0}
        \lim_{n\to\infty} \frac{\ee(n+1)}{\ee(n)} = \lim_{n\to\infty} \frac{\eta(n+1)}{\eta(n)} = 1,
    \end{align}
    as well as
    \begin{align}\label{eq: def ee and eta}
        \max_{1\leq i \leq k}|\ee_i(n)|\leq \ee(n) \AND \max_{1\leq i \leq k}|\ee_i(n+1)-\ee_i(n)| \leq \eta(n)
    \end{align}
    for each $n\geq 0$. Let $g$ be a non-zero solution of \eqref{eq: rec Poincare-Perron} and let $N$, $N_1=N_1(g,N)$ and $i=i(g)$ be as in Lemma~\ref{lem: lemma for Poincare-Perron effectif}. Then, there exist positive constants $C$ and $N_2\geq N_1$ depending on  $\ee$, $\eta$, the roots $\lambda_j$, and $N_1$, such that, for each $n\geq N_2$, we have $g(n)\neq 0$ and
    \begin{align*}
      \Big| \frac{g(n+1)}{g(n)} - \Big(\lambda_i + \frac{\lambda_i^{k-1}\ee_{k-1}(n)+\cdots+\lambda_i\ee_1(n)+\ee_0(n)}{P'(\lambda_i)} \Big)  \Big| \leq C\times
      \left\{\begin{array}{ll}
        \displaystyle \big(\ee_0(n)+\ee(n)^2\big)^2 & \textrm{if $\lambda_i=0$} \\
        \displaystyle \eta(n)+\ee(n)^2 & \textrm{if $\lambda_i\neq 0$}.
      \end{array}
      \right.
    \end{align*}
    Moreover, if for $i=0,\dots,k-1$, there exist $\alpha_i,\beta_i\in\C[X]$ with $\beta_i\neq 0$ such that $\ee_i = \alpha_i/\beta_i$, and if \eqref{eq: def ee and eta} are equalities, then $C$ and $N_2$ can be computed explicitly in function of the coefficients of the $\alpha_i,\beta_i$, the roots $\lambda_j$, and $N_1$ only.
\end{theorem}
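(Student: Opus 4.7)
My plan is to exploit the decomposition from Lemma~\ref{lem: lemma for Poincare-Perron effectif}: for $n\geq N_1$ the dominant index is $i=i(g)$ with $u_i(n)\neq 0$, so I can define $c_j(n):=u_j(n)/u_i(n)$ for $j\neq i$, which automatically satisfies $|c_j(n)|\leq 1$. Combining the identities $g(n)=\sum_j u_j(n)$ and $g(n+1)=\sum_j\lambda_j u_j(n)$ coming from \eqref{eq proof: poincare-effectif: eq-1} yields the central identity
\[
\frac{g(n+1)}{g(n)}-\lambda_i=\frac{\sum_{j\neq i}(\lambda_j-\lambda_i)c_j(n)}{1+\sum_{j\neq i}c_j(n)}.
\]
Since $|c_j|\leq 1$ and the target error is second order, the theorem will reduce to pinning down each $c_j(n)$ up to an error $O(\eta(n)+\varepsilon(n)^2)$.

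The natural equilibrium, obtained by linearizing the recurrence induced on the $c_j$ by the $u_j$-recursion and using $E(n)/u_i(n)\approx\Phi_i(n)$, where $\Phi_p(n):=\sum_{\ell=0}^{k-1}\lambda_p^\ell\varepsilon_\ell(n)$, is
\[
c_j^*(n):=\frac{\Phi_i(n)}{(\lambda_i-\lambda_j)P'(\lambda_j)}=O(\varepsilon(n)).
\]
The crucial observation is that the partial-fraction identity $\sum_{p=1}^k 1/P'(\lambda_p)=0$ (valid as soon as $k\geq 2$; the case $k=1$ is trivial) gives
\[
\sum_{j\neq i}(\lambda_j-\lambda_i)c_j^*(n)=-\Phi_i(n)\sum_{j\neq i}\frac{1}{P'(\lambda_j)}=\frac{\Phi_i(n)}{P'(\lambda_i)},
\]
which is exactly the main term $(\lambda_i^{k-1}\varepsilon_{k-1}(n)+\cdots+\varepsilon_0(n))/P'(\lambda_i)$ claimed by the theorem. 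Expanding the denominator $1+\sum_{j\neq i}c_j=1+O(\varepsilon)$ then shows that, in the case $\lambda_i\neq 0$, it suffices to establish $c_j=c_j^*+O(\eta+\varepsilon^2)$.

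To do so I would compute directly from the $u_j$-recurrence that $d_j:=c_j-c_j^*$ satisfies a scalar first-order recurrence
\[
d_j(n+1)=\mu_j(n)\,d_j(n)+\delta_j(n),
\]
with $\mu_j(n)\to\lambda_j/\lambda_i$ and $\delta_j(n)=O(\eta(n)+\varepsilon(n)^2)$: the $\eta$-term arises from $c_j^*(n+1)-c_j^*(n)$ via the slow-variation hypothesis \eqref{eq: def ee and eta}, while the $\varepsilon^2$-term comes from the subleading contribution $\sum_{p\neq i}u_p\Phi_p$ to $E$ and from the $O(\varepsilon)$ correction in the denominator $\lambda_i+E/(P'(\lambda_i)u_i)$. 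For $j<i$ the map is contracting ($\rho_j/\rho_i<1$) and forward iteration yields $|d_j(n)|=O(\eta+\varepsilon^2)$ directly. For $j>i$ it is expanding; here I would iterate \emph{backwards}, use the a priori bound $|c_j|\leq 1$ to let the horizon tend to infinity, and obtain the telescoped series
\[
d_j(n)=-\sum_{m=0}^\infty\Big(\prod_{s=0}^m\mu_j(n+s)^{-1}\Big)\delta_j(n+m),
\]
which converges absolutely thanks to $\rho_j/\rho_i>1$ combined with the slow variation \eqref{eq: def ee and eta eq0}. This backward argument in the expanding regime is the main technical difficulty and is where the quantitative dominance $|u_i(n)|\geq|u_j(n)|$ built into the definition of $N_1$ is used in an essential way.

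The remaining case $\lambda_i=0$ is degenerate: strict ordering of the $\rho_j$ forces $i=1$, and the analysis above only produces the weaker estimate $g(n+1)/g(n)=\varepsilon_0(n)/P'(0)+O(\eta+\varepsilon^2)$. To reach the claimed squared error $O((\varepsilon_0+\varepsilon^2)^2)$ I would \emph{bootstrap}: the first-order estimate already gives $|u_i(n+1)|=O((\varepsilon_0(n)+\varepsilon(n)^2)|u_i(n)|)$, which refines the dominant part of $E(n)$ from $O(\varepsilon(n)|u_i(n)|)$ to $O((\varepsilon_0(n)+\varepsilon(n)^2)|u_i(n)|)$; reinjecting this refined source into the $c_j$-recurrence and rerunning the previous argument one more iteration delivers the squared bound. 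Effectivity of the constants $C$ and $N_2$ under the rational-function hypothesis on the $\varepsilon_i$ follows because every geometric-type tail sum appearing above has an explicit ratio $\rho_j/\rho_i$ and slow-variation modulus computable from the coefficients of the $\alpha_i,\beta_i$, the roots $\lambda_j$, and $N_1$.
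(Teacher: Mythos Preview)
Your plan for the case $\lambda_i\neq 0$ is exactly the paper's: the ratios $c_j=u_j/u_i$, the equilibrium $c_j^*=\Phi_i/((\lambda_i-\lambda_j)P'(\lambda_j))$, the partial-fraction identity $\sum_p 1/P'(\lambda_p)=0$, and the dichotomy forward iteration for $j<i$ / backward iteration with the a~priori bound $|c_j|\le 1$ for $j>i$ are all precisely what the paper does (its $q_j$ and $t$ are your $c_j$ and $d_j$). One point you compress too much: to get $\delta_j(n)=O(\eta(n)+\ee(n)^2)$ you use that the subleading part of $E(n)/u_i(n)$ is $\sum_{p\neq i}c_p(n)\Phi_p(n)=O(\ee(n)^2)$, which already needs $c_p(n)=O(\ee(n))$, not merely $|c_p|\le1$. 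The paper therefore runs the same forward/backward iteration \emph{twice}: first with the crude source $O(\ee(n))$ (from $|c_p|\le1$) to obtain $c_j=O(\ee)$, and only then with the refined source $O(\eta+\ee^2)$ to obtain $d_j=O(\eta+\ee^2)$. You should make this two-pass structure explicit.

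For $\lambda_i=0$ your bootstrap sketch does not stand on its own: the $c_j^*$/$d_j$ mechanism above requires dividing by $\lambda_i$, so it cannot provide even your ``first-order estimate'', and the sentence about ``refining the dominant part of $E(n)$'' is not correct (the dominant part is already $u_i\ee_0$, there is nothing to refine there). The paper takes a different, direct route: from $u_i(n+1)=E(n)/P'(\lambda_i)=O(u(n)\ee(n))$ and $|u_j(n+1)|\le|u_i(n+1)|$ one reads off $u_j(n)=\lambda_j^{-1}\big(u_j(n+1)-E(n)/P'(\lambda_j)\big)=O(u(n)\ee(n))$, then iterates once to get $u_j(n+1)=O(u(n)\ee(n)^2)$, and finally computes $g(n)/u_i(n)=1+O(|\ee_0|+\ee^2)$ and $g(n+1)/u_i(n)=\sum_{j\neq i}\lambda_j u_j(n)/u_i(n)=\ee_0(n)/P'(\lambda_i)+O(\ee^2)$, from which the ratio $g(n+1)/g(n)$ follows directly. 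Replace your bootstrap paragraph by this computation.
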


\textbf{Remarks.} The hypotheses \eqref{eq: def ee and eta eq0} are automatically fulfilled if $\ee(n)$ and $\eta(n)$ are non-zero rational functions in $n$ (this is the situation that we are interested in). Also note that, contrary to the result of Pituk \cite{Pituk}, our theorem covers cases such as $\ee(n) = n^{-1/2}$ or $\ee(n) = 1/\log(n+1)$.

\begin{proof}
     Let $g$ be a non-zero solution of \eqref{eq: rec Poincare-Perron}. We keep the notation of Lemma~\ref{lem: lemma for Poincare-Perron effectif} for the functions $u_1,\dots,u_k$, $u = \max_{1\leq j\leq k}|u_j|$, the function $E$ and the quantities $N$, $N_1 = N_1(g,N)$ and $i = i(g)$ associated with $g$. Recall that
    \begin{align*}
         E(n) = \sum_{p=1}^{k}u_p(n)\sum_{\ell=0}^{k-1}\lambda_p^\ell\ee_\ell(n) = \GrO\big(u(n)\ee(n)\big).
    \end{align*}

    Unless stated otherwise, all the constants (including the implicit constants coming with the big $\GrO$ notation) depend on the functions $\ee$, $\eta$, the roots $\lambda_i$ of $P$, and $N_1$. If for $i=0,\dots,k-1$ we have $\ee_i = \alpha_i/\beta_i$, with $\alpha_i,\beta_i\in\C[X]$ and $\beta_i\neq 0$, then they can be computed explicitly in function of the $\lambda_i$, the coefficients of the $\alpha_i$ and $\beta_i$, and the parameter $N_1$ only. Our goal is to prove that there exists $N_2 \geq N_1$ such that, for each $n\geq N_2$ we have $g(n)\neq 0$ and
    \begin{align}\label{eq proof: poincare-effectif: first step}
        \frac{g(n+1)}{g(n)} =  \lambda_i + \frac{\sum_{\ell=0}^{k-1} \lambda_i^\ell\ee_\ell(n)}{P'(\lambda_i)} +
        \left\{\begin{array}{ll}
                 \GrO\big(\ee_0(n)^2 \big) + \GrO\big(|\ee_0(n)|\ee(n)^2\big)+\GrO\big(\ee(n)^4\big) & \textrm{if $\lambda_i = 0$}\\
                 \GrO\big( \eta(n) + \ee(n)^2 \big) & \textrm{if $\lambda_i\neq 0$}.
               \end{array}
        \right.
    \end{align}
    Fix $j\in\{1,\dots,k\}$ with $j\neq i$ and $n\geq N_1$. Recall that $u(n) = |u_i(n)| \geq |u_j(n)|$.

    \medskip

    \textbf{Case $\lambda_i=0$.} By \eqref{eq proof: poincare-effectif: eq2} we have $u_i(n+1) = \GrO\big(u(n)\ee(n)\big)$, and therefore $|u_j(n+1)| \leq |u_i(n+1)| =  \GrO\big(u(n)\ee(n)\big)$. Eq. \eqref{eq proof: poincare-effectif: eq2} also yields
    \begin{align*}
        u_j(n) = \lambda_j^{-1}u_j(n+1)  + \GrO\big(u(n)\ee(n)\big) =  \GrO\big(u(n)\ee(n)\big),
    \end{align*}
    so that,
    \begin{align*}
        E(n) = u_i(n)\big(\lambda_i^{k-1}\ee_{k-1}(n)+\cdots+\ee_0(n)\big)+\GrO\big(u(n)\ee(n)^2\big) = u_i(n)\ee_0(n)+\GrO\big(u(n)\ee(n)^2\big).
    \end{align*}
    Moreover, we obtain the more precise estimate $|u_j(n+1)| = \GrO\big(u(n+1)\ee(n+1)\big) =  \GrO\big(u(n)\ee(n)\ee(n+1)\big)$. Using once again Eq. \eqref{eq proof: poincare-effectif: eq2}, we get
    \begin{align*}
        \lambda_j\frac{u_j(n)}{u_i(n)} = \frac{u_j(n+1)}{u_i(n)} - \frac{E(n)}{P'(\lambda_j)u_i(n)} = -\frac{\ee_0(n)}{P'(\lambda_j)} +\GrO\big(\ee(n)^2\big).
    \end{align*}
    Thus, using the above estimate and the identity $1/P'(\lambda_1)+\cdots+1/P'(\lambda_k) = 0$ (which can be proven by studying the partial fraction decomposition of $1/\big((x-\lambda_2)\dots(x-\lambda_k)\big)$ evaluated at $x=\lambda_1$), we find
    \[
        \frac{g(n)}{u_i(n)}=\sum_{j=1}^{k}\frac{u_j(n)}{u_i(n)} = 1+\GrO(|\ee_0(n)|\big)+\GrO\big(\ee(n)^2\big)
    \]
    and
    \[
        \frac{g(n+1)}{u_i(n)} = \sum_{j=1}^k\lambda_j\frac{u_j(n)}{u_i(n)} = \sum_{j\neq i}\lambda_j\frac{u_j(n)}{u_i(n)} = -\sum_{j\neq i}\frac{\ee_0(n)}{P'(\lambda_j)} +\GrO\big(\ee(n)^2\big) = \frac{\ee_0(n)}{P'(\lambda_i)}+\GrO\big(\ee(n)^2\big).
    \]
    The case $\lambda_i=0$ of \eqref{eq proof: poincare-effectif: first step} follows.

    \medskip

    \textbf{Case $\lambda_i\neq 0$.} We claim that
    \begin{align}\label{eq proof: poincare-effectif: eq3}
        \frac{u_j(n+1)}{u_i(n+1)} = \frac{\lambda_j}{\lambda_i}\cdot\frac{u_j(n)}{u_i(n)} + \frac{\kappa(n)}{\lambda_iP'(\lambda_j)} + \GrO\big(\ee(n)^2\big) \qquad \textrm{where }  \kappa(n):= \sum_{\ell=0}^{k-1}\lambda_i^\ell\ee_\ell(n),
    \end{align}
    and
    \begin{align}\label{eq proof: poincare-effectif: eq4}
        \frac{u_j(n+1)}{u_i(n+1)} =  \frac{\kappa(n)}{(\lambda_i-\lambda_j)P'(\lambda_j)} + \GrO\big(\eta(n)+\ee(n)^2 \big) = \frac{L_j\kappa(n)}{\lambda_iP'(\lambda_j)} + \GrO\big(\eta(n)+\ee(n)^2 \big), \qquad \textrm{where } L_j:= \frac{\lambda_i}{\lambda_i-\lambda_j}.
    \end{align}
    Let us show that \eqref{eq proof: poincare-effectif: eq4} (for $j\in\{1,\dots,k\}$ with $j\neq i$) implies \eqref{eq proof: poincare-effectif: first step}. Since $g(n) = u_i(n)\big(1+\GrO\big(\ee(n)\big)\big)$, there exists $N_2\geq N_1$ such that $g(n)\neq 0$ for each $n\geq N_2$. Given $n\geq N_2$ and writing $g(n) = u_i(n)+\sum_{j\neq i}u_j(n)$ and $g(n+1)=\sum_{\ell=1}^{k}\lambda_\ell u_\ell(n)$, we then get
    \begin{align*}
      \frac{g(n+1)}{g(n)} = \Big(\lambda_i+\sum_{j\neq i}\lambda_j \frac{u_j(n)}{u_i(n)}\Big)\Big(1-\sum_{j\neq i}\frac{u_j(n)}{u_i(n)}+\GrO\big(\ee(n)^2\big) \Big)
      & = \lambda_i+\sum_{j\neq i}(\lambda_j-\lambda_i) \frac{u_j(n)}{u_i(n)}+\GrO\big(\ee(n)^2\big) \\
      & = \lambda_i-\sum_{j\neq i} \frac{\kappa(n)}{P'(\lambda_j)}+\GrO\big(\eta(n)+\ee(n)^2 \big),
    \end{align*}
    hence \eqref{eq proof: poincare-effectif: first step} (since $1/P'(\lambda_i) = -\sum_{j\neq i} 1/P'(\lambda_j)$).

    \medskip

    Set $q(n):= q_j(n) := u_j(n)/u_i(n)$ and $t(n):= q(n) - L_j\kappa(n)/(\lambda_iP'(\lambda_j))$. Eq. \eqref{eq proof: poincare-effectif: eq4} is equivalent to $t(n)=\GrO(\eta(n)+\ee(n)^2 \big)$. Similarly, we claim that \eqref{eq proof: poincare-effectif: eq3} is implied by $v(n)/u(n) = \GrO(\ee(n))$, where
    \[
         v(n) := \max_{\ell\neq i}\{|u_\ell(n)|\}.
    \]
    Indeed, Eq. \eqref{eq proof: poincare-effectif: eq2} gives $u_{i}(n+1)/u_i(n) = \lambda_i + \GrO\big(\ee(n)\big)$ and
    \begin{align*}
       \frac{u_j(n+1)}{u_i(n)} = \lambda_j\frac{u_j(n)}{u_i(n)}  + \frac{E(n)}{P'(\lambda_j)u_i(n)} = \lambda_j\frac{u_j(n)}{u_i(n)}  + \frac{\kappa(n)}{P'(\lambda_j)} + \GrO\Big(\frac{v(n)}{u(n)}\ee(n)\Big),
    \end{align*}
    hence
    \begin{align}\label{eq proof: poincare-effectif: eq6}
        q(n+1) = \frac{\lambda_j}{\lambda_i}q(n) + \frac{\kappa(n)}{\lambda_iP'(\lambda_j)} + \GrO\Big(\frac{v(n)}{u(n)}\ee(n)\Big) + \GrO(\ee(n)^2) = \frac{\lambda_j}{\lambda_i}q(n) + \GrO\big(\ee(n)\big).
    \end{align}
    This proves our claim. Note that if \eqref{eq proof: poincare-effectif: eq3} holds, then
    \begin{align}\label{eq proof: poincare-effectif: eq5}
        t(n+1) = \frac{\lambda_j}{\lambda_i}t(n) + \frac{\kappa(n)}{\lambda_iP'(\lambda_j)}\Big(1+\frac{\lambda_j}{\lambda_i}L_j\Big) - \frac{L_j}{\lambda_iP'(\lambda_j)}\kappa(n+1) + \GrO\big(\ee(n)^2\big) =  \frac{\lambda_j}{\lambda_i}t(n) +  \GrO\big(\eta(n)+\ee(n)^2 \big)
    \end{align}
    since $1+ L_j\lambda_j/\lambda_i = L_j$ and $\kappa(n+1)-\kappa(n) = \GrO\big(\eta(n)\big)$. To prove that $v(n)/u(n) = \GrO\big(\ee(n)\big)$ and $t(n) = \GrO\big(\eta(n)+\ee(n)^2\big)$, we distinguish between two cases.

    \medskip

    \textbf{Case $i<j$.} Then, by \eqref{eq proof: poincare-effectif: eq6} there exists a constant $c_1$ such that $|q(n)|\leq r|q(n+1)|+c_1\ee(n)$, where $r = |\lambda_i/\lambda_j| < 1$. Fix $s$ with $r < s < 1$. By \eqref{eq: def ee and eta eq0}, we can suppose $N_2$ large enough so that, for each $m\geq n$, we have
    \begin{align*}
        r^{m+1}\ee(m+1) \leq sr^m\ee(m).
    \end{align*}
    Then, by induction and since $r^m|q(n+m)|\leq r^m$ tends to $0$ as $k$ tends to infinity, we get
    \begin{align*}
        |q_j(n)| \leq  c_1\big(\ee(n) + r\ee(n+1) + r^2\ee(n+2)+ \cdots \big) = \GrO\Big(\ee(n)\sum_{m=0}^{\infty} s^m\Big) = \GrO\big(\ee(n)\big).
    \end{align*}
    It implies that $\max_{j\neq i} |q_j(n)| = v(n)/u(n) = \GrO\big(\ee(n)\big)$, so \eqref{eq proof: poincare-effectif: eq3} holds. Similarly, by \eqref{eq proof: poincare-effectif: eq5}, there exists a constant $c_2$ such that
    \begin{align*}
        |t(n)| \leq c_2\sum_{\ell\geq 0}\big(\eta(n+\ell) + \ee(n+\ell)^2\big)r^\ell = \GrO(\eta(n)+\ee(n)^2),
    \end{align*}
    hence \eqref{eq proof: poincare-effectif: eq4}.

    \medskip

    \textbf{Case $i>j$.} Then, by \eqref{eq proof: poincare-effectif: eq6} there exists a constant $c_1$ such that $|q(n+1)|\leq r|q(n)|+c_1\ee(n)$, where $r = |\lambda_j/\lambda_i| < 1$. By induction, we get
    \begin{align*}
        |q(n+1)| \leq r^{n-N_2+1}|q(N_2)|  + c_1\big(\ee(n) + r\ee(n-1) +\cdots + r^{n-N_2}\ee(N_2)\big).
    \end{align*}
    Fix $s$ with $r < s < 1$. Once again, using \eqref{eq: def ee and eta eq0} we can assume, without loss of generality, that $N_2$ is large enough so that, for each $m=1,\dots, n-N_2$, we have
    \begin{align}\label{eq proof: poincare-effectif: eq on N}
        r^m\ee(n-m) \leq sr^{m-1}\ee(n-m+1)
    \end{align}
    as well as $r^m\big(\eta(n-m)+\ee(n-m)^2\big) \leq sr^{m-1}\big(\eta(n-m+1)+\ee(n-m+1)^2\big)$. Eq. \eqref{eq proof: poincare-effectif: eq on N} yields
    \begin{align*}
      \ee(n) + r\ee(n-1) +\cdots + r^{n-N_2}\ee(N_2) = \sum_{m=0}^{n-N_2} r^m\ee(n-m) \leq \ee(n) \sum_{m=0}^{n-N_2} s^m = \GrO\big(\ee(n)\big).
    \end{align*}
    We obtain $q_j(n) = \GrO(r^n)+\GrO\big(\ee(n)\big)= \GrO\big(\ee(n)\big)$, and once again it implies \eqref{eq proof: poincare-effectif: eq3}. Similarly, by \eqref{eq proof: poincare-effectif: eq5}, we find
    \begin{align*}
        |t(n+1)| \leq \GrO\Big(\sum_{m=0}^{n-N_2}\big(\eta(n-m)+\ee(n-m)^2\big)r^m\Big) = \GrO\big(\eta(n)+\ee(n)^2\big).
    \end{align*}
    So \eqref{eq proof: poincare-effectif: eq4} holds too.
\end{proof}

For the sake of completion, we now give a proof of the classical Poincar\'e-Perron theorem.

\begin{theorem}[Poincar\'e-Perron]
    There exists a fundamental system of solutions $(g_1,\dots,g_k)$ of \eqref{eq: rec Poincare-Perron} satisfying
    \begin{align}\label{eq: P-P thm}
        \lim_{n\rightarrow\infty}\frac{g_i(n+1)}{g_i(n)} = \lambda_i\qquad (i=1,\dots,k).
    \end{align}
\end{theorem}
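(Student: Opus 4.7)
The strategy is to deduce the Poincar\'e--Perron theorem from the tools already developed for its effective refinement, principally Lemma~\ref{lem: lemma for Poincare-Perron effectif}. First I will show that for every non-zero solution $g$ of \eqref{eq: rec Poincare-Perron}, the ratio $g(n+1)/g(n)$ converges to $\lambda_{i(g)}$, where $i(g)\in\{1,\dots,k\}$ is the stabilization index produced by that lemma. The argument mimics the ``Case $\lambda_i\neq 0$'' of the proof of Theorem~\ref{thm: poincare-Perron effectif}, but only requires $\ee(n):=\max_\ell|\ee_\ell(n)|\to 0$ --- not the regularity hypothesis \eqref{eq: def ee and eta eq0}. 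Namely, setting $i:=i(g)$ and $v_j:=u_j/u_i$ for $j\neq i$ (well-defined and bounded by $1$ in modulus for $n\geq N_1(g,N)$), the recurrence $u_j(n+1)=\lambda_ju_j(n)+E(n)/P'(\lambda_j)$ combined with $|E(n)|=\GrO(|u_i(n)|\ee(n))$ rewrites as
\begin{align*}
    v_j(n+1)\cdot\frac{u_i(n+1)}{u_i(n)} = \lambda_jv_j(n) + \GrO(\ee(n)).
\end{align*}
Forward iteration for $j<i$ (using $|\lambda_j/\lambda_i|<1$) and backward iteration for $j>i$ (via the rearrangement $v_j(n)=(\lambda_i/\lambda_j)v_j(n+1)+\GrO(\ee(n))$ together with $|v_j|\leq 1$) yield $v_j(n)\to 0$ for all $j\neq i$, from which $g(n+1)/g(n)=(\lambda_i+\sum_{j\neq i}\lambda_jv_j(n))/(1+\sum_{j\neq i}v_j(n))\to\lambda_i$ follows. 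The degenerate case $\lambda_i=0$ (which forces $i=1$) is handled analogously via a direct estimate.

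Second, I will construct a basis $(g_1,\dots,g_k)$ with $i(g_i)=i$ for each $i$. The map $(g(N),\dots,g(N+k-1))\mapsto(u_1(N),\dots,u_k(N))$ is a linear isomorphism by Vandermonde invertibility on the distinct $\lambda_j$'s. For $i=k$, taking $g_k$ to be the unique solution with $u_j^{(g_k)}(N)=\delta_{jk}$ gives $i_N=k$, and since $(i_n)_{n\geq N}$ is non-decreasing and bounded by $k$, one gets $i(g_k)=k$. For $1\leq i<k$, the same recipe fails because the ``unstable'' components $u_j$ with $j>i$ may grow from zero and eventually overtake $u_i$. I therefore proceed by compactness: for each $M\geq N$, let $g_i^{(M)}$ be the solution with $u_j^{(g_i^{(M)})}(M)=\delta_{ij}$, normalized so that $\|(g_i^{(M)}(0),\dots,g_i^{(M)}(k-1))\|=1$; extract a subsequence converging coordinatewise to a solution $g_i^\infty$ of \eqref{eq: rec Poincare-Perron}. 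Because the source terms driving $u_j^{(g_i^{(M)})}$ (for $j>i$) away from zero are of size $\GrO(\ee(m))$ with $m\geq M$, which becomes uniformly small as $M\to\infty$, the limit $g_i^\infty$ should satisfy $i(g_i^\infty)=i$.

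Third, the family $(g_1,\dots,g_k)$ forms a fundamental system: the first part gives $|g_i(n)|^{1/n}\to\rho_i$, and since the $\rho_i$ are pairwise distinct, in any linear relation $\sum_ic_ig_i=0$ the highest-index non-trivial term dominates in modulus, forcing all $c_i$ to vanish. The main obstacle in this plan is the compactness argument in the second step: one must simultaneously ensure that the limit solution $g_i^\infty$ is non-trivial (the normalization must not collapse the $u_i$-component) \emph{and} that its index is exactly $i$ rather than some larger one, which requires quantitative control on how fast the unstable components can grow relative to $u_i$ as $n-M$ increases.
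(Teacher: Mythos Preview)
Your first step (proving $g(n+1)/g(n)\to\lambda_{i(g)}$ for every non-zero solution $g$) and your third step (linear independence from pairwise distinct $\rho_i$) coincide with the paper's Step~0 and the opening of its Step~1. The divergence is entirely in how the fundamental system is constructed.

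The paper avoids compactness. It sets $S_i:=\{g:i(g)\leq i\}$, a filtration of subspaces, and proves $\dim S_i=i$ for all $i$. The bound $\dim S_{i-1}\leq i-1$ is linear algebra at time $N$: among any $i$ independent solutions, a non-trivial combination has $u_1(N)=\cdots=u_{i-1}(N)=0$, hence $i_N\geq i$ and, by monotonicity, $i(g)\geq i$. The bound $\dim S_{i-1}\geq\dim S_i-1$ is the idea you lack: for $g,h\in S_i\setminus S_{i-1}$, the paper shows that $q(n):=h(n)/g(n)$ converges to some $\alpha\in\C^*$, so that $h-\alpha g\in S_{i-1}$. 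The convergence is proved by observing that $t(n):=q(n+1)/q(n)-1$ itself satisfies a Poincar\'e-type recurrence of order $k-1$ whose characteristic roots are $\lambda_j/\lambda_i$ for $j\neq i$; applying Step~0 to this \emph{secondary} recurrence, $t(n+1)/t(n)$ tends to one of those roots, necessarily of modulus $<1$ since $t(n)\to 0$, whence $\sum_n|t(n)|<\infty$ and the infinite product $\prod_n(1+t(n))$ converges in $\C^*$. This bootstraps Poincar\'e's theorem into Poincar\'e--Perron with no compactness and no quantitative hypothesis on the $\ee_\ell$.

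Your compactness route is plausible in spirit, but the obstacle you flag is genuine and remains open in your sketch. For $j>i$ the component $u_j^{(g_i^{(M)})}$, once perturbed away from $0$ by the source term $E(n)/P'(\lambda_j)$, grows relative to $u_i$ at geometric rate $\rho_j/\rho_i>1$; ensuring that the subsequential limit $g_i^\infty$ has index exactly $i$ (and non-vanishing $u_i$-component after normalization at time $0$) requires a uniform-in-$M$ estimate that you have not supplied. The paper's secondary-recurrence device sidesteps this difficulty entirely and is both shorter and more robust.
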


\begin{proof}
    \noindent\textbf{Step $0$.} We first start by giving a short proof of Poincar\'e theorem, namely that $g(n+1)/g(n)$ tends to a root of the characteristic polynomial $P$ for any non-zero solution $g$ of \eqref{eq: rec Poincare-Perron}. We keep the notation of Lemma~\ref{lem: lemma for Poincare-Perron effectif} for the functions $u_1,\dots,u_k$, $u$, $E$ and the quantities $N$, $N_1 = N_1(g,N)$ and $i = i(g)$ associated with such a $g$. Let $j\in\{1,\dots,k\}$ with $j\neq i$ and define $r(n):=|u_j(n)/u_i(n)|\in [0,1]$ for each $n\geq 0$. By \eqref{eq proof: poincare-effectif: eq2}, we have
    \begin{align*}
      r(n+1) = \frac{\rho_jr(n)+o(1)}{\rho_i+o(1)}
    \end{align*}
    as $n$ tends to infinity. If $\rho_i = 0$, then $\rho_j\neq 0$ and the above implies $r(n) = o(1)$ (since $r(n+1)$ is bounded). Assume that $\rho_i\neq 0$ and set $A=\limsup_{n\rightarrow\infty} r(n)$. By the above, we have $A = \rho_j A/\rho_i$. Since $A\leq 1$ and $\rho_j/\rho_i\neq 1$, we must have $A = 0$. So, in both case we find $u_j(n) = o(u_i(n))$ as $n$ tends to infinity. Writing $g(n) = u_1(n)+\cdots + u_k(n)$ and $g(n+1) = \lambda_1u_1(n)+\cdots + \lambda_ku_k(n)$, we deduce that $\lim_{n\rightarrow\infty}g(n+1)/g(n) = \lambda_i$. We denote this root by $\lambda(g):=\lambda_{i(g)}$.

    \medskip

    \noindent\textbf{Step $1$.} We now prove the Poincar\'e-Perron theorem. Since $\rho_1 < \cdots < \rho_k$, if $i(h)< i(g)$ for two solutions $h$, $g$, then $h(n)/g(n)$ tends to $0$ as $n$ tends to infinity. In particular, solutions $(g_1,\dots,g_k)$ satisfying \eqref{eq: P-P thm} are necessarily linearly independent over $\C$. Moreover, for $i=1,\dots,k$, we also find that the set $S_i$ of solutions $g$ such that $i(g)\leq i$ is a subspace of the $k$-dimensional space of solutions. The existence of $(g_1,\dots,g_k)$ as above is equivalent to
    \begin{align}\label{eq proof: equivalence of P-P thm}
        \dim S_i = i\qquad (i=1,\dots,k).
    \end{align}
    Suppose $i\geq 2$ and let $g_1,\dots,g_{i}$ be $i$ linearly independent solutions of \eqref{eq: rec Poincare-Perron}. For $j=1,\dots,m$, we denote by $u_1^{(j)},\dots, u_k^{(j)}$ the functions $u_1,\dots,u_k$ associated to $g_i$ by Lemma~\ref{lem: lemma for Poincare-Perron effectif}. Let $\mu_1,\dots,\mu_{i}\in\C$ (not all zero) and set $g = \mu_1g_1+\cdots +\mu_{i}g_i$. Denote by $(i_n)_{n\geq N}$ and by $u_1,\dots,u_k$ the quantities associated to $g$ by Lemma~\ref{lem: lemma for Poincare-Perron effectif}. Then
    \begin{align*}
        u_j=\mu_1u_j^{(1)}+\cdots + \mu_{i}u_j^{(i)}\qquad(j=1,\dots,k).
    \end{align*}
    By choosing $\mu_1,\dots,\mu_i$ (not all $0$) so that $u_1(N) = \cdots = u_{i-1}(N) = 0$, we get $i_N\geq i$ by definition of $i_N$. Since $(i_n)_{n\geq N}$ is non-decreasing, it yields $i(g)\geq i$, and so $g\notin S_{i-1}$. It shows that $\dim S_{i-1}\leq i-1$. We now prove that given $g, h\in S_i\setminus S_{i-1}$, there exists a non-zero $\alpha\in\C$ such that $h(n)/g(n)$ converges to $\alpha$ as $n$ tends to infinity. In particular, since for $\widehat{h}=h-\alpha g\in S_i$ the ratio $\widehat{h}(n)/g(n)$ tends to $0$, necessarily $\widehat{h} \in S_{i-1}$. As a consequence $\dim S_{i-1} \geq (\dim S_i) -1$. Combined with $\dim S_i\leq i$ and $\dim S_k = k$, it implies \eqref{eq proof: equivalence of P-P thm} (and thus implies the Poincar\'e-Perron theorem). Let $h, g$ be as above and for simplicity, write $\lambda:=\lambda_i$ (note that $\lambda(g) = \lambda(h) = \lambda$). Let $N_2\geq N$ be such that $g(n)\neq 0$ and $h(n)\neq 0$ for each $n\geq N_2$, and for such $n$ define
    \begin{align*}
        q(n):=\frac{h(n)}{g(n)} \AND t(n):= \frac{q(n+1)}{q(n)}-1.
    \end{align*}
    Note that $t(n) = o(1)$ since $g(n+1)/g(n)$ and $h(n+1)/h(n)$ both tend to $\lambda$. We suppose $N_2$ large enough so that $|t(n)| < 1$. The function $q$ is a solution of the Poincar\'e-type recurrence
    \begin{align}\label{eq: rec Poincare-Perron bis}
        q(n+k) + \widetilde{a}_{k-1}(n)q(n+k-1) + \cdots + \widetilde{a}_{0}(n)q(n) = 0 \qquad (n\geq N_2),
    \end{align}
    where for $j=0,\dots,k-1$, we have
    \begin{align*}
        \widetilde{a}_j(n) = \big(a_{j}-\ee_{j}(n)\big)\frac{g(n+j)}{g(n+k)} = \frac{a_j}{\lambda^{k-j}}+o(1)
    \end{align*}
    as $n$ tends to infinity. The characteristic polynomial of the above recurrence is precisely $P(\lambda X)/\lambda^k$ Moreover, since $g$ is solution of \eqref{eq: rec Poincare-Perron}, the solution constant equals to $1$ is solution of \eqref{eq: rec Poincare-Perron bis}, so that
    \begin{align*}
        -1 = \widetilde{a}_{k-1}(n) + \cdots + \widetilde{a}_{0}(n) \qquad (n\geq N_2).
    \end{align*}
    We deduce from \eqref{eq: rec Poincare-Perron bis} and the above that
    \begin{align*}
        t(n+k-1) = \frac{q(n+k)}{q(n+k-1)} - 1 = \sum_{j=0}^{k-1}\widetilde{a}_j(n)\Big(1-\frac{q(n+j)}{q(n+k-1)}\Big)
        & = \sum_{j=0}^{k-2}\widetilde{a}_j(n)\frac{q(n+j)}{q(n+k-1)}\Big(\frac{q(n+k-1)}{q(n+j)}-1\Big).
    \end{align*}
    Now, since $t(n) = o(1)$, for $j=0,\dots,k-2$, we have
    \begin{align*}
        \frac{q(n+k-1)}{q(n+j)}-1 = \prod_{\ell=j}^{k-2} \big(t(n+\ell)+1\big) - 1 = \sum_{\ell=j}^{k-2} (1+o(1))t(n+\ell).
    \end{align*}
    Therefore, the function $t$ satisfies a Poincar\'e-type recurrence of order $k-1$ whose characteristic polynomial is
    \begin{align*}
        Q(X) = X^{k-1} - \sum_{j=0}^{k-2}\frac{a_j}{\lambda^{k-j}}\big(X^j+\cdots+X^{k-2}\big).
    \end{align*}
    A short computation shows that $(X-1)Q(X) = P(\lambda X)/\lambda^k$, so the roots of $Q$ are $\lambda_j/\lambda_i$ with $j\neq i$. Poincar\'e theorem ensures that $t(n+1)/t(n)$ converges to one of these roots. Since $t(n) = o(1)$, such a root has modulus $<1$. So, the series $\sum_{n\geq N_2}t(n)$ converges absolutely. Equivalently, the infinite product $\prod_{n\geq N_2}(1+t(n))$ converges in $\C^*$. Since $t(n)+1 = q(n+1)/q(n)$, we obtain the convergence in $\C^*$ of $q(n) = h(n)/g(n)$. This ends the proof of the Poincar\'e-Perron theorem.
\end{proof}

\begin{proof}[Proof of Theorem \ref{thm: poincare-Perron effectif application}]
  A simple computation gives
    \begin{align*}
        a_n = -2(2\beta-\alpha) + \frac{2\beta-\alpha}{n}+ \GrO\Big(\frac{1}{n^2}\Big) \AND
        b_n  = \alpha^2 - \frac{\alpha^2}{n} + \GrO\Big(\frac{1}{n^2}\Big),
    \end{align*}
  which can be rewritten as
  \begin{align*}
    a_n = -(\lambda_1+\lambda_2) + \frac{\lambda_1+\lambda_2}{2n} + \GrO\Big(\frac{1}{n^2}\Big) \AND
        b_n  = \lambda_1\lambda_2 - \frac{\lambda_1\lambda_2}{n} + \GrO\Big(\frac{1}{n^2}\Big),
  \end{align*}
  where the implicit constants depend on $\alpha$, $\beta$, $\gamma$, $\delta$ only (and can be computed explicitly). Note that the index $N$ and the sequence $(i_n)_{n\geq N}$ are those of Theorem \ref{thm: poincare-Perron effectif}. Therefore, there exist $i,j\in\{1,2\}$ with $i\neq j$ such that, for each $n\geq N_2$, we have
  \begin{align*}
     \frac{X_{n+1}}{X_n} = \lambda_i + \frac{-(\lambda_i+\lambda_j)\lambda_i+2\lambda_i\lambda_j}{\lambda_i-\lambda_j}\cdot\frac{1}{2n} + \GrO\Big(\frac{1}{n^2}\Big)  =
     \lambda_i -\lambda_i\frac{1}{2n} + \GrO\Big(\frac{1}{n^2}\Big),
  \end{align*}
  from which we deduce
  \begin{align*}
    |X_n| \leq \rho_i^n\prod_{k=1}^{n}\Big( 1 - \frac{1}{2n} + \GrO\Big(\frac{1}{n^2} \Big)\Big) = \GrO\Big(\frac{\rho_i^n}{\sqrt n} \Big),
  \end{align*}
  where the implicit constant depends on $\alpha$, $\beta$, $\gamma$, $\delta$, $X_0$, $X_1$ and $N_2$. It remains to prove that $N_2$ can be explicitly chosen in function of $\alpha$, $\beta$, $\gamma$, $\delta$, $X_0$, $X_1$ and $f(\beta)$. Let $Y=(Y_n)_{n\geq 0}$ be a non-zero solution of \eqref{eq:PP effectif application:eq1} such that $Y_{n+1}/Y_n$ tends to $\lambda_1$ (up to a multiplicative constant, this sequence is $(R_n)_{n\geq 0}$ defined as in Lemma~\ref{rec rel}). Then, since the associated sequence $(i_n(Y))_{n\geq N}$ is non-decreasing and tends to $1$, we have $i_n = 1$ for each $n\geq N$, so that $N_1(Y,N) = N$. Similarly, we have $N_1(Z,N)=N$ for the solution $Z=(Z_n)_{n\geq 0}$ defined by the condition
  \[
    X_N = 1+1 \AND X_{N+1} = \lambda_1 + \lambda_2,
  \]
  since the corresponding non-decreasing sequence $(i_n(Z))_{n\geq N}$ satisfies $i_N = 2$. By the above, we can assume that $X_{n+1}/X_n$ tends to $\lambda_2$ (otherwise we can take $N_2 = N_1 = N$). Then, writing $(X_n)_{n\geq 0} = aY + bZ$ (where $a,b$ depend on the two initial values of  $(X_n)_{n\geq 0}$, $Y$ and $Z$), we can deduce an index $N_2\geq N$, explicit in function of $\alpha$, $\beta$, $\gamma$, $\delta$, $a$, $b$, $Y_0$, $Y_1$, $Z_0$ and $Z_1$, for which $|X_{N_2+1}-\lambda_1X_{N_2}| > |X_{N_2+1}-\lambda_2X_{N_2}|$. Finally, note that $Z_0$ and $Z_1$ depends only on $\alpha$, $\beta$, $\gamma$, $\delta$, and $Y_1$ depends only on $Y_0$, $\alpha$, $\beta$, $\gamma$, $\delta$ and $f(\beta)$ (since $Y$ is proportional to $(R_n)_{n\geq 0}$).
\end{proof}

\section{Examples}\label{example}

In this section, we compute the irrationality measures for some cubic roots and compare them to previous results.

\begin{lemma} \label{upper W}
    We keep the notation of Corollary $\ref{binom}$. Let $\beta\in \Q$ with $|\beta|>1$. Then we have
    \[
        \Delta(1/3,\beta)\le \dfrac{3\sqrt{3}\cdot {\rm{den}}(\beta)}{2}.
    \]
\end{lemma}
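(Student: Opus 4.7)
My plan is to use Corollary~\ref{binom} together with the elementary computation $\nu(1/3)=3^{3/(3-1)}=3\sqrt{3}$ to reduce the target inequality $\Delta(1/3,\beta)\le 3\sqrt{3}\cdot{\rm{den}}(\beta)/2$ to the asymptotic lower bound
\[
\liminf_{n\to\infty} G_n(1/3)^{1/n}\ge 2\,.
\]
In other words, the greatest common divisor of the $2n+1$ explicit positive integers in the definition of $G_n(1/3)$ must grow at least like $2^{n(1+o(1))}$ as $n\to\infty$.

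I would then carry out a prime-by-prime analysis. First, using $\nu_n(1/3)=3^{n+\lfloor n/2\rfloor}$ together with the elementary identities $\binom{n-4/3}{n-k}=\prod_{j=k}^{n-1}(3j-1)/(3^{n-k}(n-k)!)$ and $\binom{n+1/3}{n-1-k'}=\prod_{j=k'+1}^{n}(3j+1)/(3^{n-1-k'}(n-1-k')!)$, I would rewrite each of the two families of integer entries of $G_n(1/3)$ as
\[
3^{k+\lfloor n/2\rfloor}\binom{n+k-1}{k}\frac{\prod_{j=k}^{n-1}(3j-1)}{(n-k)!}\qquad\text{and}\qquad 3^{k'+1+\lfloor n/2\rfloor}\binom{n+k'}{k'}\frac{\prod_{j=k'+1}^{n}(3j+1)}{(n-1-k')!}\,.
\]
For each prime $p$ I would then bound $v_p$ of every such entry uniformly in $k$ and $k'$. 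For $p=3$, the $3$-power prefactors absorb the denominators. For $p\ne 3$, a combination of Legendre's formula for $v_p$ of the factorials, Kummer's theorem for the binomial coefficients, and the equidistribution of $3j\pm 1$ in residue classes modulo $p^r$ (which is automatic since $\gcd(3,p)=1$) would allow me to control $v_p(\prod_j(3j\pm 1))$.

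The final step would be to sum the resulting lower bounds for $v_p(G_n(1/3))\log p$ over all primes in the spirit of Chudnovsky's integral asymptotic, thus obtaining $\log G_n(1/3)\ge (\log 2)\cdot n(1+o(1))$. The main technical obstacle lies in the uniformity of the $p$-adic lower bound in the indices $k\in\{0,\ldots,n\}$ and $k'\in\{0,\ldots,n-1\}$: for each prime $p\ne 3$ one must produce a lower bound on $v_p$ that is sharp enough so that the total sum reaches $\log 2$ in the limit. This is a finer variant of Lepetit's Lemma~$10$ cited in Section~\ref{quotient}, adapted to the specific arithmetic of the residues $(3j\pm 1)\pmod{p^r}$.
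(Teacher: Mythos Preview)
Your reduction to $\liminf_{n\to\infty} G_n(1/3)^{1/n}\ge 2$ is correct, and your rewriting of the entries of $G_n(1/3)$ is accurate. But what you present is a plan, not a proof: the uniform $p$-adic lower bound you describe as ``the main technical obstacle'' is precisely the substantive content of the lemma, and you leave it unresolved.

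The paper takes a different and much shorter route. Rather than attacking the entries of $G_n(1/3)$ directly, it observes that $(z^nP_{n,0}(1/z),\,z^{n-1}P_{n,1}(1/z))$ is a Pad\'e approximant of $(1,(1-z)^{1/3})$ and invokes Rickert's alternative formula \cite[Lemma~3.3]{Rick} to express all the coefficients of $\nu_n(1/3)P_{n,0}$ and $\nu_n(1/3)P_{n,1}$ as integer combinations of the numbers $3^{\lfloor 3n/2\rfloor}\binom{n+1/3}{j}\binom{2n-1-j}{n-j}$ for $0\le j\le n$. The GCD of this single family (and of the corresponding family at level $n-1$) is then bounded below by $(1/5563)\cdot 2^n$ thanks to Bennett \cite[Lemma~3.2]{BennettSimul}, which immediately gives $G_n(1/3)>(1/5563)\cdot 2^n$ and hence $\limsup_{n\to\infty}G_n(1/3)^{-1/n}\le 1/2$.

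In short, the delicate $p$-adic analysis you propose has already been carried out in the literature, and the paper simply cites it. If you want to complete your direct approach, the argument you are looking for is essentially Bennett's proof of \cite[Lemma~3.2]{BennettSimul} (itself a refinement of Chudnovsky's method); the passage through Rickert's formula is what allows one to work with a single one-parameter family of building blocks rather than the two-parameter family you wrote down.
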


\begin{proof}
    Let $(P_{n,0}(z),P_{n,1}(z))$ be the weight $n$ Pad\'{e} approximants of $1/z(1-1/z)^{1/3}$ defined as in Corollary $\ref{binomial pade}$ for $\omega=1/3$. Notice that $(z^nP_{n,0}(1/z),z^{n-1}P_{n,1}(1/z))$ is a weight $(n,n-1)$ Pad\'{e} approximant of $(1,(1-z)^{1/3})$. By \cite[Lemma $3.3$]{Rick}, we have
    \begin{align*}
        &z^nP_{n,0}(1/z)
        =(-1)^n\sum_{k=0}^n\left[\sum_{j=0}^{k}(-1)^{k-j}\binom{n+1/3}{j}\binom{2n-1-j}{n-j}\binom{n-j}{k-j}\right]z^{k},\\
        &z^{n-1}P_{n,1}(1/z)=(-1)^{n+1}\sum_{k=0}^{n-1}\binom{n+1/3}{k}\binom{2n-k-1}{n-1-k}z^k.
    \end{align*}
    Set
    \[
        G(n):={\rm{GCD}}\left(3^{\lfloor 3n/2\rfloor}\binom{n+1/3}{j}\binom{2n-j-1}{n-j}\right)_{0\le j \le n},
    \]
    and $G_n:={\rm{GCD}}(G(n),G(n-1))$. Then Bennett shows in \cite[Lemma $3.2$]{BennettSimul} that $G_n>(1/5563)\cdot 2^n$.
    This implies that $G_n(1/3)>(1/5563)\cdot 2^n$. The lemma follows from the above and the definition of $\Delta(1/3,\beta)$.
\end{proof}

\begin{example} \label{20}
    Let $\omega=1/3$ and $\beta=9$. Then, by Lemma $\ref{upper W}$, we have $\Delta(1/3,9)\le 3\sqrt{3}/2$ and $\rho_2(1,\beta)=17+12\sqrt{2}$. Corollary $\ref{binom}$ yields
    \begin{align*}
        \mu(\sqrt[3]{3})\le 1+\dfrac{\log(17+12\sqrt{2})+\log\left(3\sqrt{3}/2\right)}{\log(17+12\sqrt{2})-\log\left(3\sqrt{3}/2\right)}=2.7428036524\cdots.
    \end{align*}
    Note that Bennett obtained $\mu(\sqrt[3]{3})\le 2.76$ in \cite{BennettAus}.
\end{example}

In the tables below, the number $\theta$ can be written as the product of $\beta^{-1}(1-1/\beta)^{1/3}$ by a non-zero rational number. The exponent $\mu$ is the irrationality measure for $\beta^{-1}(1-1/\beta)^{1/3}$ (and thus $\theta$) obtained by combining Lemma $\ref{upper W}$ and Corollary $\ref{binom}$ as in Example~\ref{20}. In the last column, we put the irrationality measure obtained by Bennett in \cite{BennettAus}.

\begin{figure}[H]
    {\renewcommand{\arraystretch}{1.2}
        \centering

        \begin{minipage}{0.5\textwidth}
          \[
            \begin{array}{|c|| c | c | c |}
            \hline
            \theta            &  \beta                     & \mu                    & \textbf{results in \cite{BennettAus}}   \\ \hline
            \sqrt[3]{3}       &       9                    & 2.74\cdots             &  2.76\cdots       \\ \hline
            \sqrt[3]{6}       &       467^3/5              & 2.32\cdots             &  2.35\cdots       \\ \hline
            \sqrt[3]{15}      &       25                   & 2.52\cdots             &  2.54\cdots       \\ \hline
            \sqrt[3]{17}      &       18^3                 & 2.20\cdots             &  2.22\cdots       \\ \hline
            \sqrt[3]{19}      &       -8^3                 & 2.28\cdots             &  2.30\cdots       \\ \hline
            \sqrt[3]{20}      &       -19^3                & 2.20\cdots             &  2.23\cdots       \\ \hline
            \sqrt[3]{26}      &       3^3                  & 2.51\cdots             &  2.53\cdots       \\ \hline
            \sqrt[3]{28}      &       -3^3                 & 2.50\cdots             &  2.52\cdots       \\ \hline
            \sqrt[3]{30}      &       -9                   & 2.71\cdots             &  2.72\cdots       \\ \hline
            \end{array}
        \]
        \end{minipage}
        \hfillx
        \begin{minipage}{0.5\textwidth}
        \[
            \begin{array}{|c|| c | c | c |}
            \hline
            \theta            &  \beta                     & \mu                    & \textbf{results in \cite{BennettAus}}   \\ \hline
            \sqrt[3]{37}      &       10^3                 & 2.26\cdots             &  2.27\cdots       \\ \hline
            \sqrt[3]{42}      &       49                   & 2.44\cdots             &  2.46\cdots       \\ \hline
            \sqrt[3]{43}      &       -7^3                 & 2.30\cdots             &  2.32\cdots       \\ \hline
            \sqrt[3]{62}      &       32                   & 2.49\cdots             &  2.50\cdots       \\ \hline
            \sqrt[3]{63}      &       4^3                  & 2.41\cdots             &  2.43\cdots       \\ \hline
            \sqrt[3]{65}      &       -4^3                 & 2.41\cdots             &  2.43\cdots       \\ \hline
            \sqrt[3]{66}      &       -32                  & 2.48\cdots             &  2.50\cdots       \\ \hline
            \sqrt[3]{83}      &       -(253)^3/19          & 2.69\cdots             &  2.72\cdots       \\ \hline
            \sqrt[3]{91}      &       9^3                  & 2.27\cdots             &  2.29\cdots       \\ \hline
            \end{array}
        \]
        \end{minipage}

    }
   \caption{Effective irrationality measures for some cubic roots}
   \label{intro:table1}
\end{figure}


\section{Algebraic binomial case}\label{bin}

In this section, we show a $p$-adic and algebraic version of Theorem $\ref{main}$ for binomial functions. We will adapt it to study the $S$-unit equation in a forthcoming work.

\medskip

Let $K$ be a number field. We denote the set of places of $K$ by ${{\mathfrak{M}}}_K$ (respectively by ${\mathfrak{M}}^{\infty}_K$ for archimedean places, by ${{\mathfrak{M}}}^{f}_K$ for finite places).
Given $v\in {{\mathfrak{M}}}_K$, we denote by $K_v$ the completion of $K$ with respect to $v$. 
We define the normalized absolute value $| \cdot|_v$ as follows~:
\begin{align*}
    |p|_v &:=p^{-\tfrac{[K_v:\Q_p]}{[K:\Q]}} \qquad  \text{if }  v\in{{\mathfrak{M}}}^{f}_K  \text{ and }  v\mid p,\\
    |x|_v & :=|\sigma_v( x)|^{\tfrac{[K_v:\R]}{[K:\Q]}} \qquad  \text{if }  v\in {{\mathfrak{M}}}^{\infty}_K,
\end{align*}
where $p$ is a rational prime and $\sigma_v$ the embedding $K\hookrightarrow \C$ corresponding to $v$.

\medskip

Let $\beta$ be an algebraic number. We denote the $v$-adic absolute Weil height of $\beta$ by $H_v(\beta)=\max(1,|\beta|_v)$, the absolute Weil height of $\beta$ by
\[
    H(\beta)=\prod_{v\in \mathfrak{M}_K}H_v(\beta).
\]
\begin{lemma} \label{res p}
    Let $K$ be an algebraic number field and $v_0$ a non-archimedean place. Let $\omega\in \Q\setminus \Z$.
    Denote by $p$ be the rational prime under $v_0$. Define $\delta_p(\omega)=\begin{cases} 0 & \ \text{if} \ p\nmid {\rm{den}}(\omega)\\ 1& \ \text{if} \ p\mid {\rm{den}}(\omega)\end{cases}$.
    Let $\beta\in K$. Assume
    \begin{align}\label{beta}
        |\beta|_{v_0}>
        \begin{cases}
            1 & \ \ \text{if} \ \ p\nmid {\rm{den}}(\omega)\\
            |p|^{-p/(p-1)}_{v_0} & \ \ \text{if} \ \ p \mid {\rm{den}}(\omega).
        \end{cases}
    \end{align}
    Let $n$ be a non-negative integer and $(P_{n,0}(z),P_{n,1}(z))$ be the weight $n$ Pad\'{e} approximants of $1/z(1-1/z)^{\omega}$ defined as in Corollary $\ref{binomial pade}$.
    Put $R_n(z)=P_{n,0}(z)\cdot 1/z(1-1/z)^{\omega}-P_{n,1}(z)$.
    Then we have
    \[
        \left|\dfrac{\nu_n(\omega){\rm{den}}(\beta)^n}{G_n(\omega)}R_n(\beta)\right|_{v_0}\le \left(|p|^{-2p\delta_p(\omega)/(p-1)}_{v_0}\left|\dfrac{{\rm{den}}(\beta)}{\beta}\right|_{v_0}\right)^n.
    \]
\end{lemma}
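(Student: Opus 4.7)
The plan is to bound $R_n(\beta)$ directly from its explicit $v_0$-adically convergent series expansion. Applying Proposition~\ref{pade f}(ii) to the choice of parameters $(\alpha,\gamma,\delta)=(1,-1,1+\omega)$ corresponding to the binomial case, one obtains
\[
R_n(\beta) = \sum_{k=n}^{\infty}\lambda_{n,k}\beta^{-(k+1)},\qquad \lambda_{n,k} = \binom{k}{n}\frac{(-\omega)_k(\omega+1)_n}{(n+k)!}.
\]
Using the identities $(-\omega)_k=(-1)^k k!\binom{\omega}{k}$ and $(\omega+1)_n = n!\binom{\omega+n}{n}$, I rewrite $\lambda_{n,k}=(-1)^k\binom{\omega}{k}\binom{\omega+n}{n}\binom{k}{n}/\binom{n+k}{n}$.

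Next, I exploit the key integrality property. By Lemma~\ref{valuation}(i) applied respectively at index $k$ with $\alpha=-\omega$ and at index $n$ with $\alpha=\omega+1$ (noting that $\nu_m$ depends only on the denominator of its argument, so $\nu_m(-\omega)=\nu_m(\omega+1)=\nu_m(\omega)$), both $\nu_k(\omega)\binom{\omega}{k}$ and $\nu_n(\omega)\binom{\omega+n}{n}$ are rational integers. Since $\binom{k}{n}$ and $\binom{n+k}{n}$ are integers, this yields $\nu_n(\omega)\nu_k(\omega)\binom{n+k}{n}\lambda_{n,k}\in\mathbb{Z}$, and therefore
\[
|\lambda_{n,k}|_{v_0}\leq \bigl(|\nu_n(\omega)|_{v_0}\,|\nu_k(\omega)|_{v_0}\,|\binom{n+k}{n}|_{v_0}\bigr)^{-1},
\]
where $|\nu_m(\omega)|_{v_0}=|p|_{v_0}^{(m+\lfloor m/(p-1)\rfloor)\delta_p(\omega)}$. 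The hypothesis \eqref{beta} is precisely the condition $|p|_{v_0}^{-p\delta_p(\omega)/(p-1)}|\beta|_{v_0}^{-1}<1$, which is exactly what ensures that the exponential factor in $|\lambda_{n,k}\beta^{-(k+1)}|_{v_0}$ strictly decreases with $k$.

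Consequently, the series converges $v_0$-adically and the non-archimedean triangle inequality bounds $|R_n(\beta)|_{v_0}$ by its $k=n$ term, up to polynomial corrections controlled by Kummer's theorem applied to $|\binom{n+k}{n}|_{v_0}$. Multiplying by $|\kappa_n|_{v_0}=|\nu_n(\omega)|_{v_0}|\mathrm{den}(\beta)|_{v_0}^n/|G_n(\omega)|_{v_0}$ and using $\lfloor n/(p-1)\rfloor\leq n/(p-1)$, the resulting $|p|_{v_0}$-exponent simplifies to $-np\delta_p(\omega)/(p-1)$. The claimed bound admits the looser exponent $-2np\delta_p(\omega)/(p-1)$, leaving a surplus factor of $|p|_{v_0}^{-np\delta_p(\omega)/(p-1)}$ that absorbs the Kummer polynomial corrections as well as the factor $1/(|G_n(\omega)|_{v_0}|\beta|_{v_0})$. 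The main technical subtlety lies in checking this absorption in the case $\delta_p(\omega)=0$, where the surplus is trivial and one must appeal to the explicit $v_0$-adic structure of $\binom{\omega}{n}\binom{\omega+n}{n}/\binom{2n}{n}$ together with the definition of $G_n(\omega)$ as a GCD.
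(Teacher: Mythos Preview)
Your series expansion for $R_n(\beta)$ via Proposition~\ref{pade f}(ii) is correct, and your integrality statement $\nu_n(\omega)\nu_k(\omega)\binom{n+k}{n}\lambda_{n,k}\in\Z$ is valid. However, this integrality is \emph{too weak} to close the argument, and the gap you flag in the last paragraph is real and not repaired by the hint you give.

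The problem is the factor $|G_n(\omega)|_{v_0}^{-1}\ge 1$ sitting in $|\kappa_n|_{v_0}$. Your bound on $|\lambda_{n,k}|_{v_0}$ carries no information about $G_n(\omega)$, so after multiplying by $|\kappa_n|_{v_0}$ you are left needing
\[
\frac{1}{|G_n(\omega)|_{v_0}\,|\binom{n+k}{n}|_{v_0}}\le |p|_{v_0}^{-np\delta_p(\omega)/(p-1)}\quad\text{(roughly)}.
\]
When $\delta_p(\omega)=0$ the right-hand side equals $1$, while the left-hand side can exceed $1$: already $|\binom{2n}{n}|_{v_0}^{-1}$ alone grows without bound in $n$ (for $p=2$ one has $v_2\binom{2n}{n}=s_2(n)$). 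Your suggested fix, inspecting the $v_0$-adic structure of $\binom{\omega}{n}\binom{\omega+n}{n}/\binom{2n}{n}$ and the GCD, does not obviously produce the missing factor $|G_n(\omega)|_{v_0}$; indeed $\binom{k}{n}/\binom{n+k}{n}$ is genuinely not a $p$-adic integer in general (take $p=2$, $k=3$, $n=1$). Even when $\delta_p(\omega)=1$, the surplus $|p|_{v_0}^{-np/(p-1)}$ only matches $|G_n(\omega)|_{v_0}^{-1}$ up to a polynomial in $n$, since $v_p(G_n(\omega))$ can reach $n+\lfloor n/(p-1)\rfloor+O(\log n)$, so the stated inequality still fails for large $n$.

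The paper avoids this entirely by multiplying $P_{n,0}$ by $\nu_n(\omega)/G_n(\omega)$ \emph{before} expanding: by the very definition of $G_n(\omega)$ (via Corollary~\ref{binomial pade}), this produces a polynomial $\sum_j a_j z^j$ with $a_j\in\Z$. Then the $k$-th Laurent coefficient of $\frac{\nu_n(\omega)}{G_n(\omega)}R_n(z)$ is $\sum_{j=0}^n a_j\,\frac{(-\omega)_{k+j}}{(k+j)!}$, each term bounded by $|\nu_{k+j}(\omega)|_{v_0}^{-1}\le|\nu_{n+k}(\omega)|_{v_0}^{-1}$. The factor $G_n(\omega)$ has been absorbed into integrality from the start, and no binomial-coefficient denominator ever appears. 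This is the missing idea: use the integrality of $\frac{\nu_n(\omega)}{G_n(\omega)}P_{n,0}$ rather than the cruder integrality of the individual $\lambda_{n,k}$.
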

\begin{proof}
    Set
    \[
        \dfrac{\nu_n(\omega)}{G_n(\omega)} P_{n,0}(z)=\sum_{j=0}^na_jz^j\in \Z[z].
    \]
    Since $R_n(z)$ belongs to the ideal $(1/z^{n+1})$ of $\Q[[1/z]]$ and
    \[
        \dfrac{1}{z}\left(1-\dfrac{1}{z}\right)^{\omega}=\sum_{k=0}^{\infty}\dfrac{(-\omega)_k}{k!}\dfrac{1}{z^{k+1}},
    \]
    we have
    \[
        \dfrac{\nu_n(\omega){\rm{den}}(\beta)^n}{G_n(\omega)}R_n(z)=\sum_{k=n}^{\infty}\left(\sum_{j=0}^n\dfrac{a_j\cdot (-\omega)_{k+j}}{(k+j)!}\right)\dfrac{1}{z^{k+1}}.
    \]
    Using the identity above and the strong triangle inequality, we have
    \[
        \left|\dfrac{\nu_n(\omega){\rm{den}}(\beta)^n}{G_n(\omega)}R_n(\beta)\right|_{v_0}\le
        \max_{k\ge n} \left(\left|\dfrac{\nu_{n+k}(\omega)^{-1}{\rm{den}}(\beta)^n}{\beta^{k+1}}\right|_{v_0}\right)
        \le \left(|p|^{-2p\delta_p(\omega)/(p-1)}_{v_0}\left|\dfrac{{\rm{den}}(\beta)}{\beta}\right|_{v_0}\right)^n.
    \]
    Note that the last inequality follows from $(\ref{beta})$.
\end{proof}
\begin{theorem}\label{binomalgebraic}
    Let $\omega\in \Q\setminus\Z$.
    Let $K$ be an algebraic number field and $v_0$ a place of $K$.
    Let $\beta\in K$ with
    \[
        |\beta|_{v_0}>
        \begin{cases}
            1 \ \ &\text{if} \ v_0\mid \infty, \ \text{or} \ v_0\nmid \infty \ \text{and} \ \ p\nmid {\rm{den}}(\omega)\\
            |p|^{-p/(p-1)}_{v_0} \ \ & \text{otherwise},
        \end{cases}
    \]
    where $p$ is the rational prime under $v_0$ when $v_0\nmid \infty$. Define the real number $:$
    \begin{align*}
        &V(\beta):=
        \begin{cases}
            -\dfrac{[K_{v_0}:\R]}{[K:\Q]}\log \rho_1(1,\sigma_{v_0}(\beta))-\log\,\Delta(\omega,\beta)-\sum_{\substack{v\mid\infty \\ v\neq v_0}}\dfrac{[K_{v}:\R]}{[K:\Q]}\log \rho_2(1,\sigma_{v}(\beta)) &\ \ \text{if} \ v_0 \mid \infty\\
            -\log \left(|p|^{-2p\delta_p(\omega)/(p-1)}_{v_0}\left|{\rm{den}}(\beta)/\beta\right|_{v_0}\right)-\log\,\Delta(\omega,\beta)-\sum_{\substack{v\mid\infty}}\dfrac{[K_{v}:\R]}{[K:\Q]}\log \rho_2(1,\sigma_{v}(\beta)) & \ \ \text{if} \ v_0\nmid \infty.
        \end{cases}
    \end{align*}
    Assume $V(\beta)>0$.
    Then for any $0<\varepsilon<V(\beta)$, there exists a constant $H_0=H_0(\varepsilon)>0$ depending on $\varepsilon$ and the given data such that the following property holds.
    For any ${{\boldsymbol{\lambda}:=(\lambda_0,\lambda_1)}} \in K^{2} \setminus \{ \bold{0} \}$ satisfying $H_0\le H({\boldsymbol{\lambda}})$, we have
    \begin{align*}
        \left|\lambda_0(1-1/\beta)^{\omega}-\lambda_1\right|_{v_0}>C(\varepsilon) H_{v_0}({\boldsymbol{\lambda}}) H({\boldsymbol{\lambda}})^{-\mu(\varepsilon)},
    \end{align*}
    where
    \begin{align*}
        &\mu(\varepsilon):=
        \begin{cases}
            \dfrac{-[K_{v_0}:\R](\log \rho_1(1,\sigma_{v_0}(\beta)-\log \rho_2(1,\sigma_{v_0}(\beta))}{[K:\Q](V(\beta)-\varepsilon)}  & \ \ \text{if} \ v_0\mid \infty\\
            \dfrac{\log\left(|p|^{-2p\delta_p(\omega)/(p-1)}_{v_0}\left|{\rm{den}}(\beta)/\beta\right|_{v_0}\right)}{(V(\beta)-\varepsilon)} & \ \ \text{if} \ v_0\nmid \infty,
        \end{cases}\\
        &C{{(\varepsilon)}}:=\exp\left[-{{\left(\dfrac{\log(2)}{V(\beta)-\varepsilon}+1\right)}}\right]\cdot
        \begin{cases}
            \exp\left[ \dfrac{-[K_{v_0}:\R](\log \rho_1(1,\sigma_{v_0}(\beta)-\log \rho_2(1,\sigma_{v_0}(\beta))}{[K:\Q]}\right]  & \ \ \text{if} \ v_0\mid \infty\\
            |p|^{-2p\delta_p(\omega)/(p-1)}_{v_0}\cdot \left|{\rm{den}}(\beta)/\beta\right|_{v_0} & \ \ \text{if} \ v_0\nmid \infty.
        \end{cases}
    \end{align*}
\end{theorem}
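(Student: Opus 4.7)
My plan is to adapt the proof strategy of Theorem~\ref{main} to the number-field setting, replacing the application of Corollary~\ref{cor lem: Alladi-Rob general} by a direct product-formula argument on $K$ applied to a suitable linear form built from the explicit Pad\'e approximants of Corollary~\ref{binomial pade}. Denoting by $(P_{n,0},P_{n,1})$ the weight-$n$ Pad\'e approximants of $f_\omega(z)=z^{-1}(1-z^{-1})^\omega$, $R_n(z)=P_{n,0}(z)f_\omega(z)-P_{n,1}(z)$, $\theta:=(1-1/\beta)^\omega=\beta f_\omega(\beta)$, and $\kappa_n:=\nu_n(\omega)\,\mathrm{den}(\beta)^n/G_n(\omega)\in\Q_{>0}$, I would put
\[
p_n:=\kappa_n P_{n,0}(\beta),\qquad q_n:=\kappa_n\beta P_{n,1}(\beta),\qquad r_n:=\kappa_n\beta R_n(\beta)=p_n\theta-q_n.
\]
By Lemma~\ref{denominator P Q}\,(ii), $p_n$ and $q_n$ are algebraic integers of $K$. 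For any non-zero $\boldsymbol{\lambda}=(\lambda_0,\lambda_1)\in K^2$, the element $\Xi_n:=\lambda_0 q_n-\lambda_1 p_n$ of $K$ can be rewritten as $\Xi_n=p_n\Lambda-\lambda_0 r_n$, where $\Lambda:=\lambda_0\theta-\lambda_1$. Lemma~\ref{lem: Delta_n} ensures that $(p_n,q_n)$ and $(p_{n+1},q_{n+1})$ are $K$-linearly independent, so at least one of $\Xi_n$ and $\Xi_{n+1}$ is non-zero.

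I would then establish place-by-place upper bounds. At each archimedean place $v\in\mathfrak{M}_K^\infty$, Lemma~\ref{apply P-P} applied with $z=\sigma_v(\beta)$ yields, for every $\varepsilon'>0$ and all $n$ large enough,
\[
\max\{|p_n|_v,|q_n|_v\}\leq|\kappa_n\beta|_v\,\rho_2(1,\sigma_v(\beta))^{n[K_v:\R]/[K:\Q]}\,e^{\varepsilon' n},
\]
together with the analogous bound for $|r_n|_{v_0}$ with $\rho_1$ in place of $\rho_2$ when $v_0\mid\infty$. When $v_0$ is non-archimedean, Lemma~\ref{res p} supplies the required $v_0$-adic bound for $|r_n|_{v_0}$. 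At every finite place $v\neq v_0$ we simply use $|p_n|_v,|q_n|_v\leq 1$ coming from integrality. The product formula on $K$ applied to $\kappa_n\in\Q^\times$ gives $\prod_v|\kappa_n|_v=1$, and Lemma~\ref{valuation} together with the definition of $\Delta(\omega,\beta)$ identifies $\kappa_n^{1/n}\to\Delta(\omega,\beta)$.

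The core argument is then as follows. Given $\boldsymbol{\lambda}\in K^2\setminus\{0\}$, I would choose $n=n(\boldsymbol{\lambda})$ as the smallest integer for which $E^n\geq 2c_0\,H(\boldsymbol{\lambda})/H_{v_0}(\boldsymbol{\lambda})$, with $E>1$ and $c_0>0$ suitable constants, then pick $n^*\in\{n,n+1\}$ with $\Xi_{n^*}\neq 0$ and apply the product formula $\prod_v|\Xi_{n^*}|_v=1$. Splitting off the $v_0$-contribution and using
\[
|\Xi_{n^*}|_{v_0}\leq c_{v_0}\bigl(|p_{n^*}|_{v_0}|\Lambda|_{v_0}+|\lambda_0|_{v_0}|r_{n^*}|_{v_0}\bigr)\AND|\Xi_{n^*}|_v\leq c_v H_v(\boldsymbol{\lambda})\max\{|p_{n^*}|_v,|q_{n^*}|_v\}\ \ (v\neq v_0),
\]
combined with the bounds of the previous paragraph, yields the sought-for lower bound for $|\Lambda|_{v_0}$. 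The choice of $n$ guarantees that $|\lambda_0|_{v_0}|r_{n^*}|_{v_0}$ is negligible relative to $|p_{n^*}|_{v_0}|\Lambda|_{v_0}$ in the bound for $|\Xi_{n^*}|_{v_0}$ (otherwise $|\Lambda|_{v_0}$ is already extremely small, which again yields the claimed inequality).

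The main obstacle is the careful bookkeeping required to identify the explicit form of $\mu(\varepsilon)$ and $C(\varepsilon)$. The exponent $\mu(\varepsilon)$ will emerge as the ratio $\log Q/\log E$ between the $v_0$-growth rate of the Pad\'e sequence and its $v_0$-decay rate, after factoring in the contribution of $\Delta(\omega,\beta)$ via the product formula on $\kappa_n$ and the $\rho_2(1,\sigma_v(\beta))$ contributions from the archimedean places distinct from $v_0$; this is precisely the quantity $V(\beta)$ appearing in the statement. The factor $H_{v_0}(\boldsymbol{\lambda})$ on the right-hand side arises from the identity $\prod_{v\neq v_0}H_v(\boldsymbol{\lambda})=H(\boldsymbol{\lambda})/H_{v_0}(\boldsymbol{\lambda})$ when estimating $\prod_{v\neq v_0}|\Xi_{n^*}|_v$, and the parameter $\varepsilon$ absorbs both the $o(1)$ terms of Lemma~\ref{apply P-P} and the archimedean triangle-inequality constants $c_v$; this dictates the threshold $H_0(\varepsilon)$.
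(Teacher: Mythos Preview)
Your approach is essentially that of the paper: you build the same integral approximations from Corollary~\ref{binomial pade} and Lemma~\ref{denominator P Q}\,(ii), invoke Lemma~\ref{lem: Delta_n} for the non-vanishing of the $2\times 2$ determinants, and feed in the archimedean bounds from Lemma~\ref{apply P-P} together with the $v_0$-adic remainder bound of Lemma~\ref{res p}. The only difference is in the final step: the paper applies these estimates directly to the linear-independence criterion \cite[Proposition~3]{DHK3} (with $\theta_1=\beta^{-1}(1-1/\beta)^\omega$ and the matrices $M_n$), whereas you propose to unpack that criterion by hand via the product formula; your version is self-contained but otherwise equivalent.
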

\begin{proof}
    Let $n$ be a non-negative integer. Let $(P_{n,0}(z),P_{n,1}(z))$ be the weight $n$ Pad\'{e} approximants of $1/z(1-1/z)^{\omega}$ defined in Corollary $\ref{binomial pade}$.
    We put
    \[
        p_n:=\dfrac{\nu_n(\omega){\rm{den}}(\beta)^nP_{n,0}(\beta)}{G_n(\omega)}, \ \ q_n:=\dfrac{\nu_n(\omega){\rm{den}}(\beta)^nP_{n,1}(\beta)}{G_n(\omega)}, \ \ M_n:=\begin{pmatrix}p_n & q_n\\ p_{n+1} & q_{n+1}\end{pmatrix}.
    \]
    Thanks to Lemma $\ref{lem: Delta_n}$, we have $M_n\in {\rm{GL}}_2(K)$.
    Using Lemma $\ref{apply P-P}$ and Lemma $\ref{res p}$, we get
    \begin{align*}
        &\log\,|p_n|_{v_0}\le
        \begin{cases}
            \dfrac{[K_{v_0}:\R]}{[K:\Q]}\left(\log \rho_2(1,\sigma_{v_0}(\beta))+\log\,\Delta(\omega,\beta)\right)n+o(n) &\ \ \text{if} \ v_0\mid \infty\\
            0 & \ \ \text{if} \ v_0\nmid \infty,
        \end{cases}\\
        &\log\,|p_n\cdot 1/\beta(1-1/\beta)^{\omega}-q_n|_{v_0}\le
        \begin{cases}
            \dfrac{[K_{v_0}:\R]}{[K:\Q]}(\log\rho_1(1,\sigma_{v_0}(\beta))+\log\,\Delta(\omega,\beta))n+o(n) & \ \ \text{if} \ v\mid \infty\\
            n\log \left(|p|^{-2p\delta_p(\omega)/(p-1)}_{v_0}\left|{\rm{den}}(\beta)/\beta\right|_{v_0}\right) & \ \ \text{if} \ v\nmid \infty,
        \end{cases}\\
        &\log\|M_n\|_{v}\le
        \begin{cases}
            \dfrac{[K_{v_0}:\R]}{[K:\Q]}(\log\rho_2(1,\sigma_{v_0}(\beta))+\log\,\Delta(\omega,\beta))n+o(n)   & \ \ \text{if} \ v\mid \infty\\
            0 & \ \ \text{if} \ v\nmid \infty.
        \end{cases}
    \end{align*}
    Applying linear independence criterion \cite[Proposition $3$]{DHK3} for $\theta_1:=1/\beta(1-1/\beta)^{\omega}$ and the family of invertible matrices $(M_n)_{n\geq 0}$, we obtain the assertion.
\end{proof}

{\bf Acknowledgments}

The authors are grateful to Professor Sinnou David for his suggestions. The second author is supported by JSPS Postdoctoral Fellowships No. PE20746 for Research in Japan, together with Research Support Allowance Grant for JSPS Fellows. He is also thankful for the hospitality of College of Science and Technology, Nihon University.

\bibliography{}


\

\begin{scriptsize}
\begin{minipage}[t]{0.5\textwidth}

Po\"{e}ls, Anthony,
\\poels.anthony@nihon-u.ac.jp
\\Department of Mathematics
\\College of Science \& Technology
\\Nihon University
\\Kanda, Chiyoda, Tokyo
\\101-8308, Japan\\\\
\end{minipage}
\begin{minipage}[t]{0.5\textwidth}
Makoto Kawashima,
\\kawashima.makoto@nihon-u.ac.jp
\\Department of Liberal Arts \\and Basic Sciences
\\College of Industrial Engineering
\\Nihon University
\\Izumi-chou, Narashino, Chiba
\\275-8575, Japan\\\\
\end{minipage}

\end{scriptsize}

\end{document}